\definecolor{blue}{RGB}{0,0,255}
\definecolor{red}{RGB}{255,0,0}
\definecolor{green}{RGB}{0,255,0}
\declaretheoremstyle[
  headfont=\color{blue}\bfseries,
  bodyfont=\normalfont,
  headpunct={},
  postheadspace=1em
]{springer}
\newtheorem{prop}{Proposition}[section]
\newtheorem{theo}{Theorem}[section]
\newtheorem{lemm}{Lemma}[section]
\newtheorem{coro}{Corollary}[section]
\newtheorem{rema}{Remark}[section]
\newtheorem{exam}{Example}[section]
\newtheorem{defi}{Definition}[section]
\newcommand{\beq}{\begin{equation}}
\newcommand{\eeq}{\end{equation}}
\newcommand{\beqs}{\begin{eqnarray*}}
\newcommand{\eeqs}{\end{eqnarray*}}
\newcommand{\beqn}{\begin{eqnarray}}
\newcommand{\eeqn}{\end{eqnarray}}
\newcommand{\beqa}{\begin{array}}
\newcommand{\eeqa}{\end{array}}
\newtheorem*{remark*}{Remark}
\numberwithin{equation}{section}
\def\begeq{\begin{equation}}
\def\endeq{\end{equation}}
\newcommand{\R}{\mathbb{R}}
\def\S{\mathbb S}
\def\R{\mathbb R}
\def\S{\mathbb S}
\numberwithin{equation}{section}
\begin{document}

\title{\bfseries Uniqueness of $L_p$ Minkowski problem in the supercritical range}
\author{Shi-Zhong Du}

\date{}

\maketitle

\thispagestyle{firststyle}

\begin{abstract}
\noindent \setlength{\leftskip}{-1cm} 
\setlength{\rightskip}{-1cm} 
The uniqueness of the $L_p$-Minkowski problem has been a long standing problem in convex geometry, which draws back earlier in 1974 the paper \cite{F74} (Mathematika, {\bf21}, 1974) by Firey, and later developed by Lutwak, Yang, Zhang \cite{LYZ04} (Trans. Am. Math. Soc., {\bf356}, 2004) et al. In the groundbreaking paper by Brendle-Choi-Daskalopoulos \cite{BCD17} (Acta Math, {\bf219}, 2017), 
a full uniqueness result was shown for the subcritical exponents $p\in(-n-1,1]$. In the supercritical range, 
the uniqueness problem becomes much more complicated, even on the planar case $n=1$. One of the famous results was 
shown by Andrews in \cite{An03} (J. Amer. Math. Soc., {\bf16}, 2003), where he established that the uniqueness 
holds in the range $p\in(-7,-2)$ and fails to hold for the other supercritical exponents $p\in(-\infty,-7)$. 
In this paper, we study the same uniqueness problem in the full supercritical range $p\in(-2n-5,-n-1)$ for all 
higher dimensional cases $n\geq2$. We will prove that for $p\in(-2n-5,-n-1)$, the unique strongly symmetric 
solution is given by the unit sphere ${\S}^n$. The uniqueness range $(-2n-5,-n-1)$ is optimal due to our recent 
preprint \cite{DLL24} (arXiv: 2104.07426), where non-spherical strongly symmetric solutions have been constructed 
for all $p\in(-\infty,-2n-5)$. When considering general solutions which may not be symmetric, the uniqueness set 
$\Gamma$ of $p$ for which the uniqueness holds, is shown to be both relatively open and closed in the full interval $(-2n-5,-n-1)$.\\

\noindent\textbf{Mathematics Subject Classification (2020)} 35A02 $\cdot$ 35J60 $\cdot$ 52A20
\end{abstract}


\section{Introduction}

\noindent Given a convex body $K\subset{\mathbb{R}}^{n+1}$ containing the origin,
we denote it by $K\in{\mathcal{K}}_0$.
The support function $h: {\mathbb{S}}^{n}\to{\mathbb{R}}$ of $K$ is defined by $h(x)=\max\big\{z\cdot x\ |\ z\in\Omega\big\}$ for all $x\in{\mathbb{S}}^{n}$. Letting $[A_{ij}]=[h_{ij}+h\delta_{ij}]$, we denote $[A^{ij}]$ as the inverse matrix of $[A_{ij}]$ and $[U^{ij}]$ as the cofactor matrix of $[A_{ij}]$, where the Hessian matrix of $\nabla^2_{ij}h$ is acting on an orthonormal frame $\{e_i\}_{i=1}^n$ of ${\mathbb{S}}^{n}$ and $\delta_{ij}$ denotes the Kronecker symbol.

As usual, we may also use the notation $r(y)=\sup\big\{\lambda>0\ |\ \lambda y\in{K}\big\}, \ \forall y\in{\mathbb{R}}^{n+1}$ to be the radial distance function of $\partial {K}$. The radial position function is defined by
    $$
     Z(y)= r(y)y=h(x)x+\nabla h(x), \ \ \forall x\in{\mathbb{S}}^{n},
    $$
where $x$ is the unit outer normal of $\partial K$ at the point $Z(y)\in\partial K$. We will thus call $G: \ y\to x$ the Gauss mapping and $G^{-1}:\ x\to y$ its inverse Gauss mapping. Hence ${K}$ can be recovered from $h$ by $\partial{K}=\{h(x)x+\nabla h(x)\ |\ x\in{\mathbb{S}}^{n}\}$. The surface measure of $\partial{K}$ is given by $dS=\det(\nabla^2h+hI)d\sigma$, where $d\sigma$ is the surface measure of the sphere ${\mathbb{S}}^{n}$. The classical Minkowski problem looks for a convex body such that its surface measure
matches a given Radon measure $d\mu$ on ${\mathbb{S}}^{n}$.
For a convex body ${K}\in{\mathcal{K}}_0$, Lutwak \cite{LYZ18} introduced the $L_p$ surface measure $dS_p= h^{1-p}dS$ on $\partial{K}$ and proposed  the following $L_p$ Minkowski problem.
Given a finite Radon measure $\mu$ on ${\mathbb{S}}^n$,
find a convex body ${K}\in{\mathcal{K}}_0$ such that $dS_{p}({K},\cdot)=d\mu$. In the smooth category, when the measure $d\mu$ is given by $d\mu=fd\sigma$ for a function $f$,
the $L_p$ Minkowski problem can be formulated by
  \begin{equation}\label{e1.1}
    \det(\nabla^2h+hI)=fh^{p-1}, \ \ \forall x\in{\mathbb{S}}^{n},
  \end{equation}
a fully nonlinear equation of Monge-Amp\`{e}re type. The $L_p$-Minkowski problem has been extensively studied in recent years \cite{Al38,An00,An03,BS23,BBCY19,BLYZ13,BCD17,CCL21,CLZ19,CW06,HLX15,HLW16,IM23,JLZ16,JLW15,Li19,LYZ04,LW13,Mi23,Sa21}.
According to the different behaviors exhibited, the nonlinear exponents $p\in{\mathbb{R}}$ are divided into several ranges which include
   $$
    \begin{cases}
      \mbox{ Fast Growth Case} & \leftrightarrow p\in(n+1,\infty),\\
      \mbox{ Moderate Growth Case} & \leftrightarrow p=n+1,\\
      \mbox{ Slow Growth Case} & \leftrightarrow p\in(1,n+1),\\
      \mbox{ Linear Case} & \leftrightarrow p=1,\\
      \mbox{ Sublinear Case} & \leftrightarrow p\in(0,1),\\
      \mbox{ Logarithmic Case} & \leftrightarrow p=0,\\
      \mbox{ Subcritical Case} & \leftrightarrow p\in(-n-1,0),\\
      \mbox{ Critical Case} & \leftrightarrow p=-n-1,\\
      \mbox{ Supercritical Case} & \leftrightarrow p\in(-\infty,-n-1).
    \end{cases}
   $$
There are rich phenomena regarding the existence and uniqueness of the solutions. The readers may refer to the cornerstone work of Chou-Wang \cite{CW06} for the existence and uniqueness theorems with respect to different exponents $p$. The uniqueness problem is the main topic of this paper.

For general $f$, a uniqueness result of \eqref{e1.1} when $p>n+1$ has been shown in \cite{CW06} for all dimensions, using the maximum principle. Certainly, uniqueness cannot be expected for $p=n+1$ due to the homogeneity of the equation. When $p<n+1$, the uniqueness problem is much more subtle since lack of a maximum principle. Only some partial results were known in the past. Chow has shown in \cite{Ch85} for all $n\geq1$ and $p=1-n$, the uniqueness of \eqref{e1.1} holds true for constant function $f=1$. Later, Dohmen-Giga \cite{DG94} and Gage \cite{G93} have extended the result to $n=1, p=0$ for some symmetric function $f$. Lutwak \cite{Lu93} demonstrated uniqueness for $n\geq1, p>1$ and special symmetric $f$, leveraging the invertibility of the linearized equation. Subsequently, Andrews showed uniqueness for $n=2, p=0$ and arbitrary positive function $f$. Contrary to the results in \cite{DG94,G93}, Yagisita \cite{Ya06} demonstrated a surprising non-uniqueness result for $n=1, p=0$ and non-symmetric function $f$. When $p\in(-n-1,-n-1+\sigma), 0<\sigma\ll1$, a counterexample to uniqueness was also obtained in \cite{CW06}. More recently, Jian-Lu-Wang \cite{JLW15} proved that for $p\in(-n-1,0)$, there exists at least a smooth positive function $f$ such that \eqref{e1.1} admits two different solutions. Meanwhile, Chen-Huang-Li-Liu \cite{CHLL20} established partial uniqueness for origin-symmetric convex bodies with $p\in(p_0,1), p_0\in(0,1)$, using the $L_p$-Brunn-Minkowski inequality.

When $f=1$, equation \eqref{e1.1} is reduced to the special case
    \begin{equation}\label{e1.2}
     \det(\nabla^2h+hI)=h^{p-1}, \ \ \forall x\in{\mathbb{S}}^{n}.
    \end{equation}
Andrews-Guan-Ni \cite{AGN16} showed uniqueness of \eqref{e1.2} under some symmetricity for $p\in[0,1]$. Soon later, the uniqueness in case of $1-n\leq p<1/(n+1)$ was solved in \cite{Ch17} as part of K. Choi's Ph.D. thesis. In the celebrating paper by Brendle-Choi-Daskalopoulos \cite{BCD17}, a full uniqueness of \eqref{e1.2} was established when $p\in(-n-1,1]$. More recently, Ivaki-Milman \cite{IM23} gave a new and simple proof to Brendle-Choi-Daskalopoulos's theorem by using the Alexandrov-Fenchel inequality.

In the deeply negative range $p\leq-n-1$, the situations are much more complicated. As is well known \cite{Ca72}, for $n\geq1$ and $p=-n-1$, all ellipsoids with the volume of the unit ball are solutions of \eqref{e1.2}. The uniqueness fails in this case. In the planar case where $n=1$ for \eqref{e1.2}, Andrews showed in \cite{An03} that the uniqueness holds when $p\in(-7,-2)$ and fails to hold for $p\in(-\infty,-7)$. On the higher dimensions $n\geq2$, non-uniqueness part of Andrews's theorem has been extended to all $p\in(-\infty,-2n-5)$ in \cite{DLL24}.

Due to the importance and difficulty of the topic of uniqueness, it attracts much more attention. This is the main purpose for us to discuss the problem. From here, we denote ${\mathcal{C}}_{n, p}$ to be the set of solutions of  \eqref{e1.2} and denote $\mathcal{C}'_{n, p}=\mathcal{C}_{n, p}\setminus\{{\mathbb{S}}^n\}$ to be the set of nontrivial solutions. For simplicity, in the following we also denote
\begin{eqnarray*}
 \sigma_K&=&\det(\nabla^2h_K+h_KI),\\
 d\sigma&=&d\mathcal H^n\big|_{{\mathbb{S}}^n},\\
 d\sigma_K&=&\sigma_Kd\sigma
 \end{eqnarray*}
for a given solution $K$ of \eqref{e1.2} or denote
  \begin{eqnarray*}
   dV_K&=&h_Kd\sigma_K\\
   &=&h_K\det(\nabla^2h_K+h_KI) d\sigma,
  \end{eqnarray*}
where $d{\mathcal{H}}^n$ is $n$-dimensional Hausdorff measure. At the beginning, we will prove the following criterion condition to the uniqueness of \eqref{e1.2} in the supercritical range $p\in(-n-1-\sigma,-n-1)$.

\begin{theo}\label{t1.1}
 For any $\sigma>0$, suppose that there exists some constant $\Theta^*_{\sigma}>0$ such that
\begin{equation}\label{e1.3}
 \Theta(p)= \inf_{h_K\in \mathcal{C}'_{n,p}}Q(K) \ge \Theta^*_{\sigma}>0
\end{equation}
holds for all $p\in(-n-1-\sigma,-n-1)$ and the combined quotient
  $$
   Q(K)=\lambda_3(K)q(K), \ \ q(K)=\int_{{\S }^n}|Z_K^\perp|^2dV_K{\Big/}\int_{{\S }^n}|\nabla h_K|^2dV_K.
  $$
Then the solution of \eqref{e1.2} is unique for
$p\in (-n-1-\sigma',-n-1)$,
where
  $$
   \sigma'\equiv\min \left\{\sigma,\frac{n\sqrt{1+{\Theta^*_\sigma}^2}-n}{2}\right\},
  $$
$Z_K^\perp$ is the orthonormal component of $Z_K$ defined in \eqref{e2.10}, and $\lambda_3(K) $ is the third eigenvalue of the linearized eigenvalue problem \eqref{e2.1}.
\end{theo}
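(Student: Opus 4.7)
The plan is to reduce the uniqueness claim to a single a-priori spectral estimate for nontrivial solutions, and then to invert the definition of $\sigma'$. Writing $\alpha:=-n-1-p>0$, one checks from the formula for $\sigma'$ and the strict monotonicity of $t\mapsto t(t+n)$ on $(0,\infty)$ that $4\alpha(\alpha+n)/n^2<(\Theta^*_\sigma)^2$ for every $\alpha\in(0,\sigma')$. Thus, if one can establish the key inequality
\begin{equation}\label{plan:key}
Q(K)^2 \;\le\; \frac{4\alpha(\alpha+n)}{n^2}, \qquad \forall\, K \in \mathcal{C}'_{n,p},\; p\in(-n-1-\sigma,-n-1),
\end{equation}
then the hypothesis $\Theta(p) \ge \Theta^*_\sigma$ forces $\mathcal{C}'_{n,p}=\emptyset$ for every $p\in(-n-1-\sigma',-n-1)$, which is exactly the uniqueness assertion.

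To prove \eqref{plan:key} I would proceed in two steps. The first is an integration-by-parts identity on ${\mathbb{S}}^n$, obtained by testing the equation \eqref{e1.2} against carefully chosen quantities---natural candidates being the coordinate functions of the embedding ${\mathbb{S}}^n\subset{\mathbb{R}}^{n+1}$, the support function $h_K$ itself, and the components of the radial position vector $Z_K$---and integrating against the divergence-free cofactor matrix $[U^{ij}]$ of $A=\nabla^2 h_K+h_K I$. The structural target is an identity whose leading $p$-dependent coefficient is proportional to $p+n+1=-\alpha$, with the two remaining integrands being exactly $|Z_K^\perp|^2$ and $|\nabla h_K|^2$ weighted by $dV_K$. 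This identity encodes how the near-criticality of $p$ controls the geometry of $K$ and is the analytic heart of the argument.

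The second step converts the identity into \eqref{plan:key} by a Poincar\'{e}-type bound for the linearized eigenvalue problem \eqref{e2.1}. By self-adjointness, any admissible test function orthogonal to the first two eigenspaces satisfies a Rayleigh bound with sharp constant $\lambda_3(K)$; these first two eigenspaces are the analogues of the translation modes and of the ellipsoidal-deformation modes that become degenerate at $p=1$ and $p=-n-1$ on the round sphere. Arranging the test functions of step one to be orthogonal to these eigenspaces---or projecting onto their orthogonal complement at controlled cost---converts the integral identity into the quadratic bound $\lambda_3(K)^2 q(K)^2\le 4\alpha(\alpha+n)/n^2$, which is precisely \eqref{plan:key}.

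The decisive difficulty, on which most of the technical effort is likely to be spent, is the construction of the integration-by-parts identity with the precise $(p+n+1)$-factor structure and a spectrally controllable residual. In the supercritical regime the test functions that sufficed in the subcritical range (for instance those of Brendle--Choi--Daskalopoulos \cite{BCD17}) lose their good sign behaviour, and a delicate calibration is required so that the resulting test functions are automatically orthogonal to the second eigenspace of the linearization at $K$. Once this identity is in place, the spectral deduction of the a-priori estimate, and hence of the uniqueness, is routine.
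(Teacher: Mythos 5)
Your overall reduction is correctly calibrated and, in skeleton, coincides with the paper's argument: for $\alpha=-n-1-p\in(0,\sigma')$ one indeed has $4\alpha(\alpha+n)/n^2<(\Theta^*_\sigma)^2$, and your two steps are exactly the two ingredients used in Section 2 of the paper — the ``integration-by-parts identity with leading factor $n+p+1$'' is the Ivaki--Milman computation of Lemmas \ref{l2.1}--\ref{l2.3} for the specific test functions $\phi^{(l)}=h_KZ_K^{(l)}$ (whose projection off $E^{(1)}\oplus E^{(2)}$ is $h_K$ times the corresponding component of $Z_K^\perp$ in \eqref{e2.10}), and your ``Poincar\'e-type bound'' is the $\lambda_3$-spectral inequality of Proposition \ref{p2.3}/Corollary \ref{c2.1}, all packaged in the paper as the second variation of the $p$-Blaschke--Santal\'o functional. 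The genuine gap is that the analytic heart is not supplied and, more importantly, the constant you assert is not what your outline produces. Executing your two steps in the natural way (Lemma \ref{l2.2}, then Lemma \ref{l2.3} together with Cauchy--Schwarz on the term $n\big(\int_{{\S}^n}Z_K\,dV_K\big)^2\big/\int_{{\S}^n}dV_K$, then the $\lambda_3$ bound) yields $\lambda_3(K)\int_{{\S}^n}|Z_K^\perp|^2dV_K\le\frac{\alpha(\alpha+n)}{n}\int_{{\S}^n}|\nabla h_K|^2dV_K$, i.e.\ a bound \emph{linear} in $Q(K)$, namely $Q(K)\le\alpha(\alpha+n)/n$, not the quadratic bound $Q(K)^2\le4\alpha(\alpha+n)/n^2$ on which your contradiction rests. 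The linear bound gives the contradiction with $Q(K)\ge\Theta^*_\sigma$ only when $\Theta^*_\sigma>\alpha(\alpha+n)/n$, i.e.\ for $\alpha$ up to $\tfrac{n}{2}\big(\sqrt{1+4\Theta^*_\sigma/n}-1\big)$; this covers the stated range $\sigma'$ when $\Theta^*_\sigma\le 4/n$ but not in general, so the theorem as stated (with its $\sqrt{1+(\Theta^*_\sigma)^2}$) is not reached without the additional Young-type manipulation that the paper performs in Proposition \ref{p2.4} to produce $\vartheta_\sigma$. Since you yourself defer the identity as ``the decisive difficulty'' and never verify the constant, the proposal is a correct strategy outline rather than a proof.

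Two further points. First, your identification of the first two eigenspaces is off: $E^{(1)}={\rm span}\{h_K\}$ (eigenvalue $-n$, the dilation mode, degenerate at $p=n+1$) and $E^{(2)}={\rm span}\{x_1,\dots,x_{n+1}\}$ (translations, degenerate at $p=1$); the ellipsoidal deformations sit at the \emph{third} eigenvalue on the sphere and cannot be projected away if one wants to invoke $\lambda_3(K)$, so the ``calibration'' you anticipate is simply the explicit projection \eqref{e2.10}, not an automatic orthogonality. Second, the paper's endgame is different from yours: comparing \eqref{e2.8} with \eqref{e2.9} it concludes $Z_K^\perp\equiv0$ and then needs the rigidity Lemma \ref{l2.4} (such $K$ is an origin-centred ellipsoid) together with Lemma \ref{l5.2} (non-spherical ellipsoids do not solve \eqref{e1.2} for $p<-n-1$) to force $K={\S}^n$. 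Your quantitative reformulation would bypass that rigidity step (using only that $\int_{{\S}^n}|\nabla h_K|^2dV_K>0$ for $K\neq{\S}^n$), which is a nice simplification in principle — but only once the unproven quantitative inequality, with the precise constant matching $\sigma'$, is actually established.
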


\noindent The proof of Theorem \ref{t1.1} was reduced to showing a $p$-Blaschke-Santal\'{o} functional inequality with remainder. Then, assuming the Combined Inequality \eqref{e1.3} for some solution $K$ of \eqref{e1.2}, we will show a reverse $p$-Blaschke-Santal\'{o} functional inequality. After comparing two inequalities with opposite orientations, we derive the uniqueness of \eqref{e1.2} in Section 2, assuming the Combined Inequality \eqref{e1.3} holds.

 We say a convex body $K$ is strongly symmetric if its support function $h$ is group invariant with respect to the symmetry group ${\mathcal{S}}={\mathcal{S}}(T)\subset O(n+1)$ of some regular polytope $T$. For this case, we will denote it by $K\in{\mathcal{K}}(T)$ for simplicity. Moreover, we will also denote ${\mathcal{C}}_{n,p}(T)={\mathcal{C}}_{n,p}\cap{\mathcal{K}}(T)$ and ${\mathcal{C}}'_{n,p}(T)={\mathcal{C}}'_{n,p}\cap{\mathcal{K}}(T)$ to be strongly symmetric solution sets for convenience. For some detailed discussion about the regular polytopes and their symmetry group, the readers may refer to the preprints \cite{DLL24,DWZ24}. Our second main result shows the uniqueness of strongly symmetric solution in the full supercritical range of $p\in(-2n-5,-n-1)$.

\begin{theo}\label{t1.2}
Letting dimension $n\geq2$ and $p\in(-2n-5,-n-1)$, the unique strongly symmetric solution of \eqref{e1.2} is given by $K={\mathbb{S}}^n$.
\end{theo}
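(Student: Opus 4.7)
The approach is to apply the criterion of Theorem \ref{t1.1} under the working assumption that a non-spherical strongly symmetric solution exists, and to derive a contradiction. Concretely, for $p\in(-2n-5,-n-1)$ and $K\in\mathcal{C}'_{n,p}(T)$ invariant under the symmetry group $\mathcal{S}(T)$ of some regular polytope $T$, the plan is to extract a pointwise lower bound $Q(K)\geq\Theta^{*}$ strong enough that Theorem \ref{t1.1} forces $\mathcal{C}'_{n,p}(T)$ to be empty. To reach the endpoint $-2n-5$ from below, the window formula $\sigma'=(n\sqrt{1+\Theta^{*2}}-n)/2\geq n+4$ translates into the quantitative requirement $\Theta^{*}\geq\sqrt{(3+8/n)^2-1}$, an $n$-dependent finite constant; the entire argument is designed around achieving this numeric bound.

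The central input is a symmetry-enforced spectral gap for the linearized problem \eqref{e2.1}. Because $\mathcal{S}(T)$ acts irreducibly on $\mathbb{R}^{n+1}$ for every regular polytope $T$ with $n\geq 2$, every degree-$1$ spherical harmonic is absent from the $\mathcal{S}(T)$-invariant sector; and because the isotypic decomposition of traceless symmetric $2$-tensors under $\mathcal{S}(T)$ contains no trivial summand (the same representation-theoretic fact that prevents non-spherical ellipsoids from being strongly symmetric at the critical exponent), the degree-$2$ modes are likewise absent. Hence the first surviving eigenmode lives in degree $\geq 3$, and at $K=\mathbb{S}^n$ the corresponding eigenvalue of \eqref{e2.1} is proportional to $2n+5+p$, which is strictly positive throughout the open interval $(-2n-5,-n-1)$. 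I would upgrade this spherical computation into a pointwise lower bound $\lambda_3(K)\geq c(p)>0$ for every $K\in\mathcal{C}'_{n,p}(T)$ via a min-max characterisation on the $\mathcal{S}(T)$-invariant sector, using the $\mathcal{S}(T)$-equivariance of both $h_K$ and the cofactor matrix $[U^{ij}]$. A companion lower bound for $q(K)$ is then obtained from the $\mathcal{S}(T)$-equivariance of the position map $Z_K$ and the orthogonal decomposition $Z_K=Z_K^{\perp}+Z_K^{\parallel}$: strong symmetry excludes the low-frequency directions in which $Z_K^{\perp}$ could collapse, so a Poincar\'e-type inequality with the symmetry-improved constant produces the matching estimate. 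Multiplying the two gives the desired $\Theta^{*}$.

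The main obstacle I foresee is making the $\lambda_3(K)$ bound simultaneously pointwise in $K$ and effective as $p\to -2n-5^{+}$, where the spherical benchmark $2n+5+p$ tends to zero. This requires a delicate match between the degeneration rate of $\lambda_3$ and the loss of admissible window in Theorem \ref{t1.1}; for the simplex-type polytopes, whose symmetry groups admit an $\mathcal{S}(T)$-invariant cubic form and are responsible for the bifurcation at $p=-2n-5$ constructed in \cite{DLL24}, this matching is essentially tight, which is both what makes the range $(-2n-5,-n-1)$ attainable and what makes it sharp. Once the pointwise lower bound on $Q(K)$ is secured on the invariant sector, the criterion of Theorem \ref{t1.1}, applied with $\sigma=n+4$, closes the argument and yields $K=\mathbb{S}^n$.
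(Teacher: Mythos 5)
There is a genuine gap: your plan tries to reach the whole interval $(-2n-5,-n-1)$ in one step from Theorem \ref{t1.1} by means of a symmetry-enhanced spectral constant, but the constant you need cannot be produced that way. First, the quantity $\lambda_3(K)$ entering $Q(K)$ in Theorem \ref{t1.1} (via Corollary \ref{c2.1}) is the third eigenvalue of \eqref{e2.1} over \emph{all} test functions, and the functions actually inserted in the proof, $\phi^{(l)}=h_KZ_K^{(l)}$, are components of the position vector and are not ${\mathcal{S}}(T)$-invariant; hence the restriction of the min-max to the invariant sector (where degree-$1$ and degree-$2$ harmonics are absent) does not apply, and at or near the sphere $\lambda_3(K)$ is about $n+2$, not $2n+6$. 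Your statement that the relevant eigenvalue is ``proportional to $2n+5+p$'' conflates the eigenvalue $2n+6$ of \eqref{e2.1} at ${\S}^n$ with the nondegeneracy margin $2n+6-(1-p)$ of the linearized equation; the former is what enters $Q(K)$ and is independent of $p$, the latter is what closes at $p=-2n-5$. Second, even granting an invariant-sector gap, Theorem \ref{t1.1} requires a single constant $\Theta^*_\sigma>0$ valid for all $p$ in the range and all nontrivial solutions, and returns a window $\sigma'$ computed from that constant; you cannot feed in a bound that degenerates as $p\to-2n-5^+$ (which you acknowledge it must, because of the bifurcation constructed in \cite{DLL24}) and still conclude $\sigma'\geq n+4$. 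For $n=2$ your target $\Theta^*\geq\sqrt{48}$ already fails heuristically for near-spherical competitors, since $\lambda_3\approx n+2=4$ there. Finally, the needed lower bound must hold for solutions far from the sphere as well, and the only available control is the compactness/a-priori estimates, which give a non-quantitative constant.

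This is precisely why the paper does not argue as you propose: it uses Theorem \ref{t1.1} only to obtain uniqueness in a small slightly supercritical window (Proposition \ref{p3.1}, with $q(K)\geq1$ from the orthonormal lemmas/Lemma \ref{l3.9} and $\lambda_3(K)$ bounded below by compactness), and then extends to the full range $(-2n-5,-n-1)$ by an entirely different mechanism: the open-and-closed topological argument for the symmetric uniqueness set (Theorem \ref{t6.1}), which rests on the supercritical a-priori estimates, the spectral non-resonance $1-p\in(n+2,2n+6)$ at the sphere, perturbation lemmas with degree theory, and the Kernel Property proved via the infinitesimal generator classification. None of these ingredients appears in your proposal, and without them (or a genuinely new uniform quantitative estimate, which you have not supplied) the argument does not reach the endpoint $-2n-5$.
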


\noindent It is remarkable that in the preprint \cite{DLL24}, we have constructed non-spherical, strongly symmetric solutions $K\not={\S}^n$ for all $p\in(-\infty,-2n-5)$. Hence, the range of $p$ in the Theorem \ref{t1.2} cannot be improved to higher supercritical range $p\in(-\infty,-2n-5)$. In the proof of Theorem \ref{t1.2}, a first step is to show the uniqueness holds in a slightly supercritical range $p\in(-n-1-\sigma_n,-n-1)$ for some universal constant $\sigma_n>0$, using the criterion Theorem \ref{t1.1}. In order verifying the validity of the Combined Inequality \eqref{e1.3}, a crucial part is to show various orthonormal lemmas (Lemmas \ref{l3.1}, \ref{l3.6}-\ref{l3.8}) for strongly symmetric convex body in Section 3.1. Let us first sum them in the following theorem for the convenience of the readers.

\begin{theo}\label{t1.3}
  Letting $n\geq2$ and $T$ be a regular polytope on ${\mathbb{R}}^{n+1}$, for each strongly symmetric convex body $K\in{\mathcal{K}}(T)$, the functions $x_\alpha, \alpha=1,2,\ldots,n+1$ are orthonormal to each other in the sense of
    \begin{equation}\label{e1.4}
      \int_{{\mathbb{S}}^n}\frac{x_\alpha x_\beta}{h_K^2}dV_K=\frac{\delta_{\alpha\beta}}{n+1}\int_{{\mathbb{S}}^n}h_K^{-2}dV_K, \ \ \forall \alpha,\beta=1,2,\ldots,n+1
    \end{equation}
 for $dV_K= h_K\det(\nabla^2h_K+h_KI)d\sigma, d\sigma=d\mathcal H^n|_{{\mathbb{S}}^n}$.
\end{theo}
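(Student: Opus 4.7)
\medskip

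\noindent\textbf{Proof proposal for Theorem \ref{t1.3}.} The plan is to interpret the left-hand side of \eqref{e1.4} as a symmetric bilinear form on $\mathbb{R}^{n+1}$ that is invariant under the symmetry group $\mathcal{S}(T)$, and then invoke Schur's lemma together with the known irreducibility of the standard representation of $\mathcal{S}(T)$ on $\mathbb{R}^{n+1}$. Concretely, I would define the matrix
\begin{equation*}
 M_{\alpha\beta}\;=\;\int_{\mathbb{S}^n}\frac{x_\alpha x_\beta}{h_K(x)^2}\,dV_K(x),\qquad \alpha,\beta=1,\ldots,n+1,
\end{equation*}
and show that it commutes with every $A\in\mathcal{S}(T)$. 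After that the identity \eqref{e1.4} is essentially forced.

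The first main step is the invariance computation. Let $A\in\mathcal{S}(T)\subset O(n+1)$. Since $K\in\mathcal{K}(T)$, the support function $h_K$ satisfies $h_K(Ay)=h_K(y)$ for all $y\in\mathbb{S}^n$. Because the operator $\nabla^2+I$ on $\mathbb{S}^n$ is $O(n+1)$-covariant, the Monge-Amp\`ere density $\sigma_K=\det(\nabla^2 h_K+h_KI)$ is likewise $\mathcal{S}(T)$-invariant, hence so is $dV_K=h_K\sigma_K\,d\sigma$. Performing the change of variables $x=Ay$ in the integral defining $M_{\alpha\beta}$ and using these invariances gives
\begin{equation*}
 M_{\alpha\beta}\;=\;\sum_{\gamma,\delta}A_{\alpha\gamma}A_{\beta\delta}\,M_{\gamma\delta}\;=\;(AMA^T)_{\alpha\beta},
\end{equation*}
so $AM=MA$ for every $A\in\mathcal{S}(T)$.

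The second step is to conclude that $M$ is a scalar multiple of the identity. The standard representation of the symmetry group of a regular polytope in $\mathbb{R}^{n+1}$ is irreducible (the group acts transitively on vertices, which span the ambient space, and it is a finite irreducible Coxeter group). By Schur's lemma applied to the symmetric real matrix $M$, the commutation relation $AM=MA$ for all $A\in\mathcal{S}(T)$ forces $M=c\,I_{n+1}$ for some $c\in\mathbb{R}$. Taking the trace and using $\sum_{\alpha=1}^{n+1}x_\alpha^2=1$ on $\mathbb{S}^n$ yields
\begin{equation*}
 (n+1)c\;=\;\mathrm{tr}(M)\;=\;\int_{\mathbb{S}^n}h_K^{-2}\,dV_K,
\end{equation*}
which identifies the constant and delivers \eqref{e1.4}.

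The only non-computational ingredient, and the one I would flag as the main obstacle, is the irreducibility of the natural representation of $\mathcal{S}(T)$ on $\mathbb{R}^{n+1}$; the rest is a change-of-variables argument plus a trace. For each regular polytope appearing in the paper this irreducibility can either be cited from the Coxeter-group literature or verified directly by exhibiting enough elements of $\mathcal{S}(T)$ (reflections across the coordinate hyperplanes and vertex-permuting rotations for the cross-polytope and simplex, for instance) to rule out any non-trivial invariant subspace. Once that is in place, the theorem follows immediately from the two steps above.
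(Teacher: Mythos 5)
Your proposal is correct, but it takes a genuinely different route from the paper. The paper proves \eqref{e1.4} by a case-by-case analysis of the regular polytopes (Lemmas \ref{l3.1}, \ref{l3.6}--\ref{l3.8}): for the hypercube and cross-polytope it exhibits explicit sign-flip and coordinate-swap symmetries; for the simplex it runs a fairly involved induction on dimension using cutting hyperplanes, the circle mapping of Lemma \ref{l3.2}, the reduction Lemmas \ref{l3.3}--\ref{l3.4}, and explicit ``good position'' vertex coordinates (Lemma \ref{l3.5}); for the exceptional polytopes ($n=2$, $k=12,20$ and $n=3$, $k=24,120,600$) it again writes down vertices and the symmetries they admit. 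Your argument replaces all of this by one uniform observation: the matrix $M_{\alpha\beta}=\int_{{\mathbb{S}}^n}x_\alpha x_\beta h_K^{-2}dV_K$ is $\mathcal{S}(T)$-equivariant (since $h_K$, hence $\sigma_K$ and $dV_K$, are invariant), so it commutes with the whole group, and irreducibility of the reflection representation forces $M=cI$, with $c$ identified by the trace. This is shorter, avoids explicit coordinates, and treats all polytopes at once; the paper's approach is more elementary and self-contained and, along the way, also yields the first-order identities ${\mathcal{W}}_\alpha(\zeta)=0$ which it uses elsewhere. Two small points to tighten: (i) since you work over $\R$, plain Schur's lemma only says the commutant is a division algebra, but because $M$ is symmetric its eigenspaces are $\mathcal{S}(T)$-invariant subspaces, so irreducibility directly gives $M=cI$ — it is worth saying this explicitly; (ii) your parenthetical justification of irreducibility (transitivity on vertices that span $\R^{n+1}$) is not by itself sufficient (a rectangular box satisfies it with a reducible symmetry group), so you should either cite the standard fact that symmetry groups of regular polytopes are finite irreducible Coxeter groups ($A_{n+1}$, $B_{n+1}$, $I_2(k)$, $H_3$, $H_4$, $F_4$) or verify invariant-subspace-freeness directly, as you indicate.
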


\noindent The Orthonormal Theorem \ref{t1.3} plays an essential role in the proof of Theorem \ref{t1.2} as the Kazdan-Warner condition in the classical Minkowski problem, where it is known that
     \begin{equation}
       \int_{{\mathbb{S}}^n}\frac{x_\alpha}{h_K}dV_K=0, \ \ \forall\alpha=1,2,\ldots,n+1
     \end{equation}
is always true for convex body $K$ containing the origin, without assuming the symmetricity. However, \eqref{e1.4} is in generally false for general convex bodies, even in the planar case $n=1$. Fortunately, when the symmetricity is imposed, we will show that \eqref{e1.4} does hold for $K\in{\mathcal{K}}(T)$.

Turning to considering the general solutions of \eqref{e1.2} which may not be symmetric, we will explore the topological structure of the uniqueness set defined by  $\Gamma\equiv\big\{p\in(-2n-5,-n-1)|\ {\mathcal{C}}_{n,p}=\{{\S}^n\}\big\}$. We have the following result.

\begin{theo}\label{t1.4}
  For dimensions $n\geq2$, the uniqueness set $\Gamma$ is relatively open and closed in $(-2n-5,-n-1)$. That's to say, either $\Gamma=(-2n-5,-n-1)$ or $\Gamma=\emptyset$.
\end{theo}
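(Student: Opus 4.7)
Since $(-2n-5,-n-1)$ is connected, the dichotomy reduces to showing that $\Gamma$ is both relatively open and relatively closed. The plan is to handle openness via spectral analysis of the linearization at $\mathbb{S}^n$, and closedness via an analytic continuation of hypothetical non-sphere solutions.

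For openness, I would compute the linearization of $F(h,p):=\det(\nabla^2 h+hI)-h^{p-1}$ at $h\equiv 1$, which is $L_p\psi=\Delta\psi+(n+1-p)\psi$ on $\mathbb{S}^n$. Since $-\Delta$ has spectrum $\{\lambda_k=k(k+n-1):k\geq 0\}$, for $p\in(-2n-5,-n-1)$ the shift $n+1-p$ lies strictly between $\lambda_2=2(n+1)$ and $\lambda_3=3(n+2)$, so $L_p:C^{2,\alpha}\to C^{\alpha}$ is an isomorphism. The implicit function theorem then yields, uniformly in $p$ near any $p_0\in(-2n-5,-n-1)$, a $C^{2,\alpha}$-neighborhood of $h\equiv 1$ in which $\mathbb{S}^n$ is the only solution. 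Combined with the Chou-Wang \cite{CW06} a priori $C^{k,\alpha}$ estimates for \eqref{e1.2}, a sequence $K_k\in\mathcal{C}'_{n,p_k}$ with $p_k\to p_0\in\Gamma$ would converge along a subsequence to $K_\infty\in\mathcal{C}_{n,p_0}=\{\mathbb{S}^n\}$; but then $K_k$ eventually enters the implicit-function-theorem neighborhood and must coincide with $\mathbb{S}^n$, a contradiction.

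For closedness, I would take $p_k\to p_0$ with $p_k\in\Gamma$ and suppose for contradiction that $K_0\in\mathcal{C}'_{n,p_0}$ exists. The plan is to continue $K_0$ to a family $K_p\in\mathcal{C}'_{n,p}$ for $p$ in some neighborhood of $p_0$, contradicting $p_k\in\Gamma$ for large $k$. Since $F$ is real-analytic in $(h,p)$ and $D_h F(h_{K_0},p_0)$ is a Fredholm operator of index zero, after quotienting out the trivial kernel coming from the $SO(n+1)$-orbit of $K_0$ we fall into one of two cases: either the transverse linearization is invertible, in which case the classical implicit function theorem produces a smooth arc $p\mapsto K_p$; or genuine bifurcation occurs and an analytic Lyapunov-Schmidt reduction produces at least one analytic branch of solutions emanating from $(h_{K_0},p_0)$. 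By continuity these solutions remain non-spherical for $p$ close to $p_0$, which contradicts $p_k\in\Gamma$.

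The main obstacle is the closedness step, where the linearization at a non-sphere $K_0$ is generically degenerate (rotational kernel, possibly augmented by genuine bifurcation). The rotational part is handled by restricting to a slice transverse to the $SO(n+1)$-orbit, while the bifurcation case is absorbed by the real-analytic structure of the zero set $\{F=0\}$, which guarantees a continuation on at least one side of $p_0$; as a robust alternative, a Leray-Schauder degree computation would show persistence of nontrivial solutions as $p$ varies continuously. In contrast, openness reduces to a clean eigenvalue computation, hinging on the observation that $(-2n-5,-n-1)$ is precisely the parameter range for which $n+1-p$ falls strictly inside the spectral gap $(\lambda_2,\lambda_3)$ of the Laplacian on $\mathbb{S}^n$.
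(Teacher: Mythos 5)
Your openness argument is essentially the paper's (Proposition \ref{p5.1} together with Lemma \ref{l5.1}): the spectral computation that $n+1-p$ lies strictly between the Laplacian eigenvalues $2(n+1)$ and $3(n+2)$ exactly when $p\in(-2n-5,-n-1)$, followed by compactness plus local uniqueness at the sphere. But the compactness input is misattributed: Chou--Wang do not provide uniform bounds for solutions of \eqref{e1.2} in the supercritical range $p<-n-1$, and such bounds are genuinely delicate there (at $p=-n-1$ they fail outright, since all volume-normalized ellipsoids are solutions and can degenerate). The uniform estimate $1/C\leq h\leq C$ on compact subsets of $(-\infty,-n-1)$ is precisely Theorem \ref{t1.5}, which the paper proves in Section 5 via the ellipsoid lemmas (Propositions \ref{p5.2}--\ref{p5.3}), a blow-up analysis in the spirit of B\"or\"oczky--Saroglou, and Saroglou's non-existence theorem. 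Treating this as a citation leaves a substantial hole even in the ``easy'' half.

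The closedness step has a more serious gap: your contradiction requires producing non-spherical solutions at $p=p_k$ near $p_0$, but neither real-analyticity nor degree theory forces this when the linearization at $K_0$ degenerates beyond the rotational kernel. After Lyapunov--Schmidt reduction the zero set near $(h_{K_0},p_0)$ may be entirely contained in the slice $\{p=p_0\}$ (a ``vertical'' branch), in which case there is no continuation to either side; this is not hypothetical --- at $p=-n-1$ there is a whole continuum of ellipsoidal solutions at that single exponent which does not continue to nearby $p$, so analyticity alone cannot guarantee a branch in the $p$-direction. The Leray--Schauder alternative fares no better: by the a priori bounds and the invertibility of the linearization at the sphere, the total degree carried by the nontrivial solution set at $p_0$ is zero, so persistence arguments are vacuous unless one first exhibits an isolated nontrivial solution with nonzero local degree --- exactly what may fail. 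The paper's actual mechanism is different and is the heart of Sections 4 and 6: it first shows (Proposition \ref{p6.2}, via the perturbation Lemmas \ref{l6.1}--\ref{l6.2}, where the nonzero degree is manufactured by shifting the exponent to $p_1$ with $1-p_1$ off the spectrum) that at a point of $\overline{\Gamma}\setminus\Gamma$ every nontrivial solution must be \emph{non-isolated} modulo rotations, and then excludes this by proving the Kernel Property \eqref{e6.3} --- the kernel of the linearized equation \eqref{e4.14} at any nontrivial solution consists only of rotational tangential functions --- through the Infinitesimal Generator Lemma and the classification of infinitesimal generators (Proposition \ref{p4.1}, Proposition \ref{p6.4}), combined with the integrating-flow covering argument of Proposition \ref{p6.3} showing that such solutions would then be isolated modulo rotations. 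This non-degeneracy-modulo-rotations statement is the essential ingredient missing from your proposal; without it the closedness argument does not close.
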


\noindent A key ingredient in the proof of Theorem \ref{t1.4} is to prove the following {\it a-priori} estimate for solutions of \eqref{e1.2} in the supercritical range $p<-n-1$.

\begin{theo}\label{t1.5}
 For dimension $n\geq2$ and compact subset ${\mathcal{P}}\Subset(-\infty,-n-1)$, there exists a positive constant $C_{n}({\mathcal{P}})$ depending only on $n$ and ${\mathcal{P}}$, such that
  \begin{equation}\label{e1.6}
    1/C_{n}({\mathcal{P}})\leq h(x)\leq C_{n}({\mathcal{P}}), \ \ \forall x\in{\mathbb{S}}^n
  \end{equation}
 holds for all solutions $h\in{\mathcal{C}}_{n,p}$ of \eqref{e1.2} with respect to $p\in{\mathcal{P}}$.
\end{theo}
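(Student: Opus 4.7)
My plan is to prove Theorem~\ref{t1.5} by establishing uniform $L^\infty$ bounds on the support function $h$ of any solution to \eqref{e1.2} with $p\in\mathcal{P}$, combining a geometric volumetric comparison with a compactness and rescaling contradiction argument. The compactness of $\mathcal{P}\Subset(-\infty,-n-1)$ will enter only through the uniform bounds $|p|\leq M$ and $|p+n+1|\geq\delta>0$ on compact subsets.

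First, I will multiply \eqref{e1.2} by $h$ and integrate over the sphere to obtain the central identity
\[
(n+1)\,V(K)\;=\;\int_{\mathbb{S}^n} h^{p}\,d\sigma.
\]
Writing $R=\max h$ and $r=\min h$, the trivial inclusions $B_r\subset K\subset B_R$ yield $|B_1|r^{n+1}\leq V(K)\leq|B_1|R^{n+1}$, and since $p<0$ one has $|S^n|R^{p}\leq\int h^{p}d\sigma\leq|S^n|r^{p}$. Comparing these produces the one-sided bounds $r\leq 1\leq R$, while the cone inclusion $\mathrm{conv}(B_r,\{z_0\})\subset K$ with $|z_0|\geq R$ gives the sharper volumetric lower bound $V(K)\geq c_n r^n R$; combining these yields the comparability
\[
R\;\leq\;C_n\, r^{\,p-n},
\]
so the uniform upper bound on $R$ is equivalent to the uniform lower bound on $r$.

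Next, to establish the uniform upper bound $R\leq C_n(\mathcal{P})$, I will argue by contradiction. Assume there exist $p_k\in\mathcal{P}$ and solutions $h_k$ with $R_k:=\max h_k\to\infty$; after extracting subsequences, $p_k\to p_\infty\in\mathcal{P}$ and, by the comparability, $r_k\to 0$ as well. The rescalings $\tilde h_k:=h_k/R_k\in[0,1]$ with $\max\tilde h_k=1$ solve
\[
\det(\nabla^{2}\tilde h_k+\tilde h_k I)\;=\;R_k^{\,p_k-1-n}\,\tilde h_k^{\,p_k-1},
\]
where $p_k-1-n\leq-\delta-1<0$ uniformly on $\mathcal{P}$, so the prefactor $R_k^{\,p_k-1-n}\to 0$. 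Blaschke selection then extracts a Hausdorff limit $\tilde K_\infty\subset B_1$ with $\max\tilde h_\infty=1$; on any compact subset of $\{\tilde h_\infty>0\}$ the Monge-Amp\`ere measure $\det(\nabla^{2}\tilde h_k+\tilde h_k I)\,d\sigma$ tends to zero, so the surface area measure of $\tilde K_\infty$ is supported in $\{\tilde h_\infty=0\}$, forcing $\tilde K_\infty$ to be a lower-dimensional convex body contained in a hyperplane through the origin. To convert this degeneracy into a contradiction, I will track the rescaled cone bound $V(\tilde K_k)\geq c_n\tilde r_k^n$ with $\tilde r_k=r_k/R_k$, the rescaled identity $(n+1)V(\tilde K_k)=R_k^{\,p_k-1-n}\int\tilde h_k^{\,p_k}d\sigma$, and the concentration of $\tilde h_k^{\,p_k}d\sigma$ near $\{\tilde h_\infty=0\}$; the uniform gap $|p_k+n+1|\geq\delta>0$ provides a definite blow-up rate for this integral, preventing the limit from being simultaneously non-trivial in the direction $x_\infty$ realizing $\tilde h_\infty(x_\infty)=1$ and collapsing to a lower-dimensional set. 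Once $R\leq C_n(\mathcal{P})$ is secured, the lower bound $r\geq c_n(\mathcal{P})$ follows at once from the comparability $R\leq C_n r^{\,p-n}$ since $p-n<0$.

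The main obstacle is precisely the concentration analysis in the final step: one must sharply control the rescaled Monge-Amp\`ere measure as it potentially concentrates on the vanishing set of $\tilde h_\infty$, while simultaneously controlling the singular integral $\int h^{\,p}d\sigma$ as $h$ develops a near-zero minimum. The compactness of $\mathcal{P}$, specifically the two-sided quantitative bounds $|p+n+1|\geq\delta>0$ and $|p|\leq M$, is exactly what makes these competing limits quantitatively comparable and allows the contradiction to close.
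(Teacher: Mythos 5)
Your opening reductions (the identity $(n+1)V(K)=\int_{{\S}^n}h^p d\sigma$, the cone bound, and the relation $R\leq C_n r^{p-n}$) match Proposition \ref{p5.3} of the paper, and your isotropic blow-up correctly shows that if $R_k\to\infty$ the rescaled bodies $\tilde K_k=K_k/R_k$ must collapse to a lower-dimensional limit. But that is only the easy half of Proposition \ref{p5.4}, and your final step --- ``tracking'' the rescaled volume identity, the cone bound $V(\tilde K_k)\geq c_n\tilde r_k^{\,n}$, and the concentration of $\tilde h_k^{\,p_k}d\sigma$, with the gap $|p+n+1|\geq\delta$ supposedly forcing a contradiction --- does not close. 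The scalings genuinely balance: for a body comparable to an ellipsoid with semi-axes $\mu_1=\epsilon$, $\mu_2=\cdots=\mu_{n+1}=L$, one has $V\simeq\epsilon L^{n}$ and (by the sharp two-sided estimate of Lemma \ref{l5.3}) $\int h^p\simeq\mu_1^{p+n+1}/(\mu_1\cdots\mu_{n+1})$, so the identity $(n+1)V=\int h^p$ only imposes $(\epsilon L^n)^2\simeq\epsilon^{\,p+n+1}$, i.e.\ $L\simeq\epsilon^{(p+n-1)/(2n)}\to\infty$ as $\epsilon\to0$. Thus a collapsing, elongating sequence is perfectly consistent with every counting constraint you propose to use; no contradiction can come from volume/integral bookkeeping alone. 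This is precisely why the paper does something much heavier: it normalizes anisotropically with the John-ellipsoid map $T^{(j)}=\mathrm{diag}(1/\mu_1,\ldots,1/\mu_1,1/\mu_{k+1},\ldots,1/\mu_{n+1})$, controls $|\det T^{(j)}|^2(\mu_1^{(j)})^{n+1+p}$ via Proposition \ref{p5.2} (whose proof is the delicate ellipsoid integral Lemma \ref{l5.3}), transports the $L_p$ surface measure under linear images by the B\"or\"oczky--Saroglou formula \eqref{e5.30} from \cite{BS23}, shows the limit measure is a rotation-invariant multiple of ${\mathcal H}^{k-1}$ on a subsphere, and then invokes Saroglou's non-existence theorem (\cite{Sa21}, Theorem 1.3) for the contradiction. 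None of this machinery (or a substitute for it) appears in your plan; the paragraph you label ``the main obstacle'' is exactly the missing proof.

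There is also a smaller but genuine error at the end: from the upper bound $R\leq C_n({\mathcal P})$ the inequality $R\leq C_n r^{p-n}$ gives no lower bound on $r$ --- it is equivalent to $r\leq (C_n/R)^{1/(n-p)}$, an upper bound on $r$, and is consistent with $R$ bounded and $r$ arbitrarily small. To deduce $r\geq c_n({\mathcal P})$ you need an inequality in the opposite direction, as in Proposition \ref{p5.3}: the convexity estimate $h(x)\leq m+C_1M\,\dist(x,x_0)$ near the minimum point yields $\int_{{\S}^n}h^p\geq C^{-1}M^{-n}m^{\,p+n}$ (convergent for $p<-n-1$), and combining with $\int h^p=(n+1)V\leq C M^{n+1}$ gives $m^{\,p+n}\leq C(-p)^nM^{2n+1}$, which, since $p+n<0$, converts the upper bound on $M$ into the desired lower bound on $m$. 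So the overall architecture (reduction to an upper bound plus a blow-up dichotomy) is in the spirit of the paper, but both the decisive non-collapse mechanism and the correct derivation of the lower bound are missing.
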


\noindent It is remarkable that a similar {\it a-priori} estimate has been obtained by B\"{o}r\"{o}czky-Saroglou \cite{BS23} in the range $p\in[0,1)$. To show the {\it a-priori} estimate \eqref{e1.6} in the supercritical range, we need to establish some ellipsoid lemmas in Subsection 5.1. Motivated from the blowup argument carried out in \cite{BS23}, we complete the proof of Theorem \ref{t1.5} in Subsection 5.2. As an application of {\it a-priori} estimate Theorem \ref{t1.5}, we will obtain the openness of the uniqueness set $\Gamma$ in Section 5. At the second step, we reduce the closedness of the uniqueness set to a Kernel Property of the linearized equation of \eqref{e1.2}. One of the crucial parts is to prove a perturbation lemma for full $L_p$-Minkowski problem with general $f$. Then, with the help of an Infinitesimal Generator Lemma and Prolongation Formula shown in Section 4, we are able to verify the validity of the Kernel Property of \eqref{e1.2}, and thus complete the proof of Theorem \ref{t1.4}. So, the full uniqueness Theorem \ref{t1.2} for strongly symmetric solution is a direct consequence of parallel version of Theorem \ref{t1.4} (Theorem \ref{t6.1}) together with the uniqueness in a slightly supercritical range.

 As an application of the main Theorem \ref{t1.2}, the Gauss curvature flow
   \begin{equation}\label{e1.7}
    \begin{cases}
     \displaystyle\frac{\partial X(t,x)}{\partial t}=-G^\alpha(t,x)x, & \forall t>0, x\in{\mathbb{S}}^n,\\
     X(0,x)=X_0(x), & \forall x\in{\mathbb{S}}^n
    \end{cases}
   \end{equation}
is studied for Gauss curvature $G$ of the convex body ${K}_t$ determined by radial position function $X(t,x)=h(t,x)x+\nabla h(t,x)$, where $h(t,\cdot)$ is the support function of ${K}_t$. As usual, supposing a solution ${K}_t$ of \eqref{e1.7} blows up at finite time $T\in(0,\infty)$, we will call the blowup to be
  \begin{equation}
   \begin{cases}
     \mbox{type I}, & \mbox{ if } 0<{\mathcal{L}}\leq{\mathcal{U}}<\infty,\\
     \mbox{type II}, & \mbox{ if }  {\mathcal{U}}=\infty,\\
     \mbox{type III}, & \mbox{ if } {\mathcal{L}}=0,
   \end{cases}
  \end{equation}
where
    \begin{eqnarray*}
      {\mathcal{L}}&=&\liminf_{t\to T^-}(T-t)^{-\beta}\inf_{x\in{\mathbb{S}}^n}h(t,x),\\ {\mathcal{U}}&=&\limsup_{t\to T^-}(T-t)^{-\beta}\sup_{x\in{\mathbb{S}}^n}h(t,x).
    \end{eqnarray*}
Then, as a consequence of Theorem \ref{t1.2}, we reach the following convergence result of strongly symmetric solutions for supercritical exponent $\alpha\in\left(\frac{1}{2n+6},\frac{1}{n+2}\right)$.

\begin{coro}\label{c1.1}
   For supercritical exponent $\alpha\in\left(\frac{1}{2n+6},\frac{1}{n+2}\right)$, consider strongly symmetric solutions of the Gauss curvature flow \eqref{e1.7}. Then, for type I singular time $T$, ${K}_t$ always converges to a round point as $t\to T^-$.
\end{coro}

\medskip

\section{$p$-Blaschke-Santal\'{o} functional inequality with remainder}

\noindent To begin with, let us consider the $p$-Blaschke-Santal\'{o} functional defined by
   $$
    {\mathcal{F}}_p(K)=Vol(K)\left(\inf_{\xi\in K}\int_{{\S}^n}(h_K(x)-\xi\cdot x)^{p}d\sigma\right)^{-\frac{n+1}{p}}
   $$
for all $K\in{\mathcal{K}}_0$. When $p=-n-1$, the well-known Blaschke-Santal\'{o} inequality asserts that the functional ${\mathcal{F}}_{-n-1}(\cdot)$ is bounded from above by the best constant ${\mathcal{F}}_{-n-1}({\S}^n)$. Moreover, the unique maximizers are given exactly by ellipsoids. When $p<-n-1$, we know that the functional is not bounded from above.

First, we will establish an inequality of the $p$-Blaschke-Santal\'{o} functional with remainder in Subsection 2.1. Then, after showing that the validity of the Combined Inequality \eqref{e1.3} implies a reverse $p$-Blaschke-Santal\'{o} functional inequality, we complete the proof of Theorem \ref{t1.1} in Subsection 2.2.

\subsection{The $1$\textsuperscript{st} and $2$\textsuperscript{nd} variations of the $p$-Blaschke-Santal\'{o} functional}

Given a convex body $K$, we denote $[U^{ij}_K]$ to be the cofactor matrix of $[A_{ij}]=[\nabla^2_{ij}h_K+h_K\delta_{ij}]$.
Aleksandrov \cite{Al38} studied the eigenvalue problem
  \begin{equation}\label{e2.1}
 {\begin{split}
   -U^{ij}_K(\nabla^2_{ij}\phi+\phi\delta_{ij})
     & =\lambda h_K^{-1}\det(\nabla^2h_K+h_KI)\phi\\
     &=\lambda h_K^{p-2} \phi\ \ \ \text{if $h_K$ is a solution}
     \end{split}}
  \end{equation}
and obtained the first and second eigenvalues of \eqref{e2.1}.

\begin{prop}\label{p2.1} [Aleksandrov]
 Considering the eigenvalue problem \eqref{e2.1}, the eigenfunctions corresponding to the zero eigenvalue $\lambda=0$ are given by $\phi=\langle a,x\rangle$ for some vector $a\in{\mathbb{R}}^{n+1}$. Moreover, $\lambda=-n$ is the unique negative eigenvalue of \eqref{e2.1}, whose eigenfunctions can only be $\kappa h_K$ for each $\kappa\neq0$.
\end{prop}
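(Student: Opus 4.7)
My plan is to read off both eigenvalue identifications from natural symmetries of the Monge-Amp\`ere operator $M(h) := \det(\nabla^2 h + hI)$, and then to upgrade existence to exhaustiveness (every zero-mode is linear; $-n$ is the unique negative eigenvalue with a one-dimensional eigenspace) by exploiting self-adjointness of the linearization together with Aleksandrov's classical mixed-discriminant inequality.

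The two existence halves are short. For $\lambda=0$: the support function of the translate $K+ta$ is $h_K+t\langle a,x\rangle$, and translation preserves $M$, so differentiating $M(h_K+t\langle a,x\rangle)=M(h_K)$ at $t=0$ yields $U^{ij}_K(\nabla^2_{ij}\phi+\phi\delta_{ij})=0$ for $\phi=\langle a,x\rangle$. For $\lambda=-n$: the cofactor trace identity $U^{ij}A_{ij}=n\det A$ applied to $A_{ij}=\nabla^2_{ij}h_K+h_K\delta_{ij}$ gives
\begin{equation*}
-U^{ij}_K(\nabla^2_{ij}h_K+h_K\delta_{ij}) = -n\det A = (-n)\bigl(h_K^{-1}\det A\bigr)\cdot h_K,
\end{equation*}
so $\phi=\kappa h_K$ is an eigenfunction with eigenvalue $-n$ for every $\kappa\neq 0$; the right-hand weight reduces to $h_K^{p-2}$ when $h_K$ solves \eqref{e1.2}.

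For the exhaustiveness, set $L_K\phi := U^{ij}_K(\nabla^2_{ij}\phi+\phi\delta_{ij})$. The classical divergence-free identity $\nabla_j U^{ij}_K=0$ on $\mathbb{S}^n$ lets me integrate by parts to produce the symmetric bilinear form
\begin{equation*}
Q_K(\phi,\psi) = -\int_{\mathbb{S}^n} L_K\phi\cdot\psi\,d\sigma = \int_{\mathbb{S}^n}\bigl(U^{ij}_K\nabla_i\phi\,\nabla_j\psi - U^{ij}_K\delta_{ij}\phi\psi\bigr)d\sigma,
\end{equation*}
which, together with positive definiteness of $U^{ij}_K$, turns \eqref{e2.1} into a self-adjoint weighted eigenvalue problem in $L^2(h_K^{p-2}d\sigma)$ with discrete spectrum and orthogonal eigenspaces. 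To pin down the signature I would reinterpret $-Q_K(\phi,\phi)$ as the second variation of mixed volume along the perturbation $h_K\mapsto h_K+t\phi$ and invoke Aleksandrov's mixed-discriminant inequality, which shows that this quadratic form has exactly one negative direction (spanned by $h_K$) and an $(n+1)$-dimensional nullspace spanned by $x_1,\ldots,x_{n+1}$; orthogonality then forces the eigenspaces at $\lambda=-n$ and $\lambda=0$ to be exactly $\mathbb{R}\cdot h_K$ and $\{\langle a,x\rangle:a\in\mathbb{R}^{n+1}\}$, respectively.

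The main obstacle is this final spectral count, since the two existence halves are essentially formal. Pinning down the signature amounts to the Aleksandrov-Fenchel theorem restricted to mixed discriminants, which I would cite from \cite{Al38} rather than reprove.
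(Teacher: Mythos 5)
The paper does not actually prove Proposition \ref{p2.1}: it is quoted as Aleksandrov's classical theorem with the citation \cite{Al38}, so there is no in-paper argument to compare against. Your sketch is the standard route to this result and is correct in outline: the two existence halves are indeed immediate (for $\lambda=0$ one does not even need translation invariance, since $\nabla^2_{ij}\langle a,x\rangle+\langle a,x\rangle\delta_{ij}=0$ pointwise; for $\lambda=-n$ the cofactor trace identity does the job), and the identity $\nabla_jU^{ij}_K=0$ makes $Q_K$ symmetric, so the weighted problem \eqref{e2.1} is self-adjoint with discrete spectrum and the negative index and kernel of $Q_K$ (which are independent of the positive weight $h_K^{-1}\det A$) determine the spectral picture. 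Be aware, though, of how the burden is distributed in your last step: the mixed-discriminant/local Alexandrov--Fenchel \emph{inequality} only gives the reversed Cauchy--Schwarz bound $B(f,h_K)^2\geq B(f,f)B(h_K,h_K)$ with $B(f,g)=\int_{\mathbb{S}^n} f\,U^{ij}_K(g_{ij}+g\delta_{ij})\,d\sigma$, whence any $f$ with $B(f,h_K)=0$ has $B(f,f)\leq 0$ and $Q_K$ has at most one negative direction; this pins down $\lambda=-n$ as the unique, simple negative eigenvalue with eigenspace $\mathbb{R}h_K$. The claim that the nullspace is \emph{exactly} $(n+1)$-dimensional, i.e.\ that $L_K\phi=0$ forces $\phi=\langle a,x\rangle$, is the equality-case characterization for smooth, strictly convex data, which is not a formal consequence of the inequality but is precisely the hard content of Aleksandrov's theorem. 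So your "proof" is in substance a reduction of Proposition \ref{p2.1} to the cited result in \cite{Al38} rather than an independent derivation --- which is acceptable here, since the paper itself treats the proposition exactly that way, but you should state explicitly that you are invoking the equality case, not merely the inequality.
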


As in Proposition \ref{p2.1}, the first and second eigenvalues of the linearized eigen-problem \eqref{e2.1} are given by $-n$ and $0$ respectively. While, the corresponding eigenfunctions are given by $\kappa h_K$ and $x_i, i=1,2,\ldots,n+1$. Under the inner product $\langle f,g\rangle=\int_{{\S}^n}h_K^{-1}fgd\sigma_K$, given any function $\phi$ on the unit sphere, we define $\phi^{(1)}, \phi^{(2)}, \phi^{\perp}=\phi^{(1,2)}$ to be the components of $\phi$ which are orthonormal with respect to the eigen-subspaces
 $$
 \begin{cases}
  E^{(1)}=span\{h_K\},\\
  E^{(2)}=span\{x_i,i=1,2,\ldots,n+1\},\\
  E^{(1,2)}=E^{(1)}\cup E^{(2)}
 \end{cases}
 $$
respectively. In this subsection, we will firstly show the equivalence of the solutions of \eqref{e1.2} with the critical points of the functional ${\mathcal{F}}_p$.

\begin{prop}\label{p2.2}
  Supposing $p<-n-1$, if $K$ is a solution of the full equation
   \begin{equation}\label{e2.2}
    \det(\nabla^2h_K+h_KI)=\frac{V(K)}{\int_{{\S}^n}h^p_Kd\sigma}h_K^{p-1}, \ \ \forall x\in{\S}^n,
   \end{equation}
  then it is a critical point of the functional ${\mathcal{F}}_p$.  Conversely, if $K$ is a critical point of the functional ${\mathcal{F}}_p$, then $K-\xi_K$ is a solution of \eqref{e2.2}, where $\xi_K\in K$ is the unique Blaschke-Santal\'{o} center of $K$ determined by
      \begin{equation}\label{e2.3}
         \int_{{\S}^n}x(h_K(x)-\xi\cdot x)^{p-1}d\sigma=0, \ \ \xi=\xi_K.
      \end{equation}
\end{prop}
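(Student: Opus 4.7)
The plan is to compute the first variation of $\log\mathcal{F}_p$ under support function perturbations and identify the resulting Euler--Lagrange equation with the shifted Minkowski equation. A preliminary step is to justify existence and smoothness of the Blaschke--Santaló center: for any $K\in\mathcal{K}_0$ with $0\in\mathrm{Int}(K)$, the auxiliary functional $\xi\mapsto\int_{\S^n}(h_K-\xi\cdot x)^pd\sigma$ is strictly convex on $\mathrm{Int}(K)$ (since $p<0$ makes $t\mapsto t^p$ strictly convex) and blows up as $\xi\to\partial K$, so the infimum is attained at a unique interior $\xi_K$ characterized by the first-order condition \eqref{e2.3}; the implicit function theorem applied to this nondegenerate condition then yields smooth dependence of $\xi_K$ on $h_K$, which will legitimize the envelope-theorem step.

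With this in hand, I would consider the variation $h_t=h_K+t\phi$ with smooth $\phi$, giving $K_t\in\mathcal{K}_0$ for small $|t|$. The classical Minkowski variational formula yields
\[
 \frac{d}{dt}\Big|_{t=0}V(K_t)=\int_{\S^n}\phi\,\sigma_K\,d\sigma,
\]
which follows from $V(K)=\frac{1}{n+1}\int h_K\sigma_Kd\sigma$ combined with the divergence-free identity for the cofactor matrix $U^{ij}_K$ and the trace relation $\sum_{i,j}U^{ij}_KA_{ij}=n\sigma_K$. Since $\xi_{K_t}$ is an interior critical point of the $\xi$-functional and depends smoothly on $t$, the envelope theorem gives
\[
 \frac{d}{dt}\Big|_{t=0}\inf_{\xi\in K_t}\int_{\S^n}(h_t-\xi\cdot x)^pd\sigma=p\int_{\S^n}\phi\,(h_K-\xi_K\cdot x)^{p-1}d\sigma.
\]
Requiring $\delta\log\mathcal{F}_p=0$ for all admissible $\phi$ then produces the pointwise Euler--Lagrange equation
\[
 \sigma_K=C_K\,(h_K-\xi_K\cdot x)^{p-1}\quad\text{on }\S^n,
\]
with scalar $C_K$ determined by $V(K)$ and $\int_{\S^n}(h_K-\xi_K\cdot x)^pd\sigma$.

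For the $(\Leftarrow)$ direction I would set $\tilde h=h_K-\xi_K\cdot x$ and exploit the fact that both $\det(\nabla^2h+hI)$ and $V(K)$ are invariant under $h\mapsto\tilde h$, since the linear restriction $x\mapsto\xi_K\cdot x$ satisfies $\nabla^2_{ij}(\xi_K\cdot x)+(\xi_K\cdot x)\delta_{ij}=0$ on $\S^n$ and volume is translation-invariant. The Euler--Lagrange equation therefore becomes the normalized equation \eqref{e2.2} for the translated body $K-\xi_K$, equivalent to \eqref{e1.2} after the unique homothety absorbing the multiplicative constant. For the $(\Rightarrow)$ direction, any solution of \eqref{e2.2} has $\sigma_K$ proportional to $h_K^{p-1}$, and the classical identity $\int_{\S^n}x\,\sigma_Kd\sigma=0$ (the vanishing centroid of the surface area measure of a closed convex hypersurface) immediately gives $\int_{\S^n}xh_K^{p-1}d\sigma=0$. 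This is precisely \eqref{e2.3} at $\xi=0$, so the Blaschke--Santaló center of $K$ is the origin, and direct substitution into the Euler--Lagrange equation verifies the critical-point condition.

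The main technical obstacle I expect is the rigorous justification of differentiating through the infimum, which demands uniform integrability of $(h_t-\xi\cdot x)^p$ in a neighborhood of $\xi_{K_t}$ under the constraint $p<-n-1$ (where the integrand is highly singular near $\partial K_t$). Once smoothness of $\xi_{K_t}$ in $t$ is secured via the implicit function theorem on the nondegenerate first-order condition, the remainder of the argument reduces to a mechanical first-variation calculation combined with the closed-hypersurface centroid identity.
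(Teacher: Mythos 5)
Your proposal is correct and follows essentially the same route as the paper: establish the unique interior Blaschke--Santal\'o center via strict convexity and boundary blow-up of $\xi\mapsto\int_{\S^n}(h_K-\xi\cdot x)^p d\sigma$, compute the first variation of ${\mathcal{F}}_p$, use the vanishing-centroid identity $\int_{\S^n}x\,\sigma_K d\sigma=0$ (the paper's Kazdan--Warner step) to place $\xi_K$ at the origin in the forward direction, and invoke translation invariance of $\det(\nabla^2h+hI)$ and of the volume for the converse. The only cosmetic difference is that you differentiate through the infimum by the envelope theorem where the paper varies the $E^{(2)}$-orthogonal part $h^{(2)}_\varepsilon$, and you are in fact slightly more careful than the paper in noting the homothety needed to pass from the normalized equation \eqref{e2.2} to \eqref{e1.2}.
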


\begin{proof}
  For $\xi\in K$, the function $H(\xi)=\int_{{\S}^n}(h_K(x)-\xi\cdot x)^pd\sigma$ is a convex function in $\xi$. Moreover, $H(\xi)$ tends to infinity as $\xi$ approaches the boundary $\partial K$ of $K$, provided $p<-n-1<-n$. Therefore, one knows that the function $H(\cdot)$ attains infimum at a unique minimizer $\xi=\xi_K\in K$ satisfying \eqref{e2.3} in the interior of $K$. Moreover, letting $K^{(2)}$ be the convex body determined by the support function $h_K^{(2)}$ orthonormal to $E^{(2)}$, we have $K^{(2)}=K-\xi_K$ and
     \begin{equation}
      {\mathcal{F}}_p(K)=Vol(K^{(2)})\left(\int_{{\S}^n}h_{K^{(2)}}^pd\sigma\right)^{-\frac{n+1}{p}}.
     \end{equation}
  Given a solution $K$ (with support function $h_K$) of \eqref{e2.2}, it is inferred from Kazdan-Warner identity that $K=K^{(2)}$. For any variation $K_\varepsilon$ (with support function $h_\varepsilon$) satisfying
      $$
       h_0=h_K, \ \ \frac{d}{d\varepsilon}\Big|_{\varepsilon=0}h^{(2)}_\varepsilon=\phi,
      $$
  we have $\phi\perp E^{(2)}$ and
   \begin{eqnarray}\nonumber\label{e2.5}
     \frac{d}{d\varepsilon}\bigg|_{\varepsilon=0}{\mathcal{F}}_p(K_\varepsilon)&=&\frac{d}{d\varepsilon}\bigg|_{\varepsilon=0}{\mathcal{F}}_p(K^{(2)}_\varepsilon)\\
      &=&\left(\int_{{\S}^n}h_{K^{(2)}}^pd\sigma\right)^{-\frac{n+1}{p}}\int_{{\S}^n}\phi\det(\nabla^2h_{K^{(2)}}+h_{K^{(2)}}I)d\sigma\\ \nonumber
      &&-V(K^{(2)})\left(\int_{{\S}^n}h_{K^{(2)}}^pd\sigma\right)^{-\frac{p+n+1}{p}}\int_{{\S}^n}h_{K^{(2)}}^{p-1}\phi d\sigma=0
   \end{eqnarray}
  since $K=K^{(2)}$. That's to say, $K$ is a critical point of the functional ${\mathcal{F}}_p$. Conversely, if $K$ is a critical point of the functional ${\mathcal{F}}_p$, a same calculation as in \eqref{e2.5} shows that $K^{(2)}=K-\xi_K$ is a solution of \eqref{e2.2}. The conclusion has been shown.
\end{proof}

Second proposition illustrates the relation between the second variation of the functional with the third eigenvalue of the linearized eigen-problem \eqref{e2.1}.

\begin{prop}\label{p2.3}
  Letting $K$ be a solution of \eqref{e2.2} and given any variation $K_\varepsilon$ satisfying
     $$
      h_0=h_K, \ \ \frac{d}{d\varepsilon}\Big|_{\varepsilon=0}h_\varepsilon^{(2)}=\phi, \ \ \frac{d^2}{d\varepsilon^2}\Big|_{\varepsilon=0}h_\varepsilon^{(2)}=\varphi,
     $$
  we have
    \begin{eqnarray}\nonumber\label{e2.6}
     \frac{d^2}{d\varepsilon^2}\bigg|_{\varepsilon=0}{\mathcal{F}}_p(K_\varepsilon)&=&V^{-\frac{n+1}{p}}(K)\int_{{\S}^n}U^{ij}_K(\phi^\perp_{ij}+\phi^\perp\delta_{ij})\phi^\perp\\
     &&-(p-1)V^{-\frac{n+1}{p}}(K)\int_{{\S}^n}h_K^{p-2}{\phi^{\perp}}^2
    \end{eqnarray}
  holds for orthonormal component $\phi^\perp=\phi^{(1,2)}=\phi^{(1)}$ of $\phi$ with respect to $E^{(1,2)}$.
\end{prop}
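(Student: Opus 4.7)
The plan is to differentiate the factored form $\mathcal{F}_p(K_\varepsilon)=V(K_\varepsilon)\,I(\varepsilon)^{-(n+1)/p}$, where $I(\varepsilon)=\int_{\S^n}(h_\varepsilon^{(2)})^p\,d\sigma$, and to exploit three structural facts: the scale invariance of $\mathcal{F}_p$, the Euler-type identity $(n+1)V(K)=\int_{\S^n}h_K^p\,d\sigma$ valid at any solution of \eqref{e1.2}, and the resulting cancellation of all $\varphi$-terms that this invariance forces.

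First I would verify that $\mathcal{F}_p(\lambda K)=\mathcal{F}_p(K)$: indeed $V(\lambda K)=\lambda^{n+1}V(K)$ while $\int h_{\lambda K}^p\,d\sigma=\lambda^p\int h_K^p\,d\sigma$, so the two factors cancel. Reparametrizing $K_\varepsilon\mapsto \lambda(\varepsilon)K_\varepsilon$ with $\lambda(0)=1$ and $\dot\lambda(0)$ chosen to absorb the $h_K$-coefficient of $\phi$, the invariance lets us assume $\phi=\phi^\perp=\phi^{(1,2)}$ without altering $\ddot{\mathcal{F}}_p|_0$. Combined with the standing hypothesis $\phi\perp E^{(2)}$ (inherited from differentiating $h_\varepsilon^{(2)}$) this gives $\int h_K^{p-1}\phi\,d\sigma=0$, hence $\dot V_0=0$ and $\dot I_0=0$, so all cross-terms in the Leibniz expansion vanish and we are left with
\[
\ddot{\mathcal{F}}_p\big|_0 = I_0^{-(n+1)/p}\Bigl(\ddot V_0-\tfrac{1}{p}\ddot I_0\Bigr),
\]
after using $(n+1)V_0/I_0=1$ to collapse the coefficient.

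The two second derivatives are then computed directly. From $dV=\tfrac{1}{n+1}h\,\sigma\,d\sigma$ and the cofactor rule $\partial_\varepsilon \det(\nabla^2 h+hI)=U^{ij}(\dot h_{ij}+\dot h\,\delta_{ij})$, one obtains
\[
\ddot V_0 = \int_{\S^n} U^{ij}_K(\phi_{ij}+\phi\,\delta_{ij})\phi\,d\sigma + \int_{\S^n}\varphi\,h_K^{p-1}\,d\sigma,
\]
while a Taylor expansion of the integrand $h^p$ yields
\[
\ddot I_0 = p(p-1)\int_{\S^n} h_K^{p-2}\phi^2\,d\sigma + p\int_{\S^n} h_K^{p-1}\varphi\,d\sigma.
\]
Subtracting $\tfrac{1}{p}\ddot I_0$ from $\ddot V_0$ cancels the $\varphi$-terms exactly and reproduces the right-hand side of \eqref{e2.6}, after absorbing the constant $(n+1)^{-(n+1)/p}$ into the factor $V^{-(n+1)/p}(K)$.

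The main obstacle will be executing the reparametrization cleanly: verifying that the rescaling $\lambda(\varepsilon)$ kills precisely the $h_K$-component of $\phi$ in the $h_K^{p-2}$-weighted inner product, and that the rescaled second derivative differs from the original only by terms proportional to $\dot V_0$ and $\dot I_0$ (which vanish after the reduction). The exact cancellation of the $\varphi$-contributions at the final step is the infinitesimal manifestation of scale invariance, and it is what makes \eqref{e2.6} depend only on the first-order datum $\phi^\perp$ rather than on the second-order perturbation $\varphi$.
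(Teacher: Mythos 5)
Your proposal is correct, and the core computation (differentiating $V\cdot I^{-\frac{n+1}{p}}$ twice, using $(n+1)V(K)=\int_{\S^n}h_K^p\,d\sigma$ at a solution, and observing that the $\varphi$-terms cancel because their coefficient is exactly the vanishing first variation) is the same as the paper's. Where you genuinely diverge is in how the scaling component of $\phi$ is handled. The paper keeps a general $\phi\perp E^{(2)}$, derives the intermediate formula \eqref{e2.7}, which carries the extra term $(p-n-1)V^{-\frac{p+n+1}{p}}(K)\bigl(\int_{\S^n}h_K^{p-1}\phi\bigr)^2$, and then passes to \eqref{e2.6} by decomposing $\phi=\phi^{\perp}+c\,h_K$ and invoking orthogonality of $E^{(1)},E^{(2)},\phi^{\perp}$ under both inner products; to be complete this also needs the pure $c^2$-contributions of the three terms of \eqref{e2.7} to cancel, which uses $U^{ij}_K\bigl((h_K)_{ij}+h_K\delta_{ij}\bigr)=n\,h_K^{p-1}$ (the $\lambda=-n$ eigenfunction) together with $\int h_K^p=(n+1)V$. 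Your scale-invariance normalization $K_\varepsilon\mapsto\lambda(\varepsilon)K_\varepsilon$ removes the $E^{(1)}$-component before differentiating, so $\int h_K^{p-1}\phi=0$, hence $\dot V_0=\dot I_0=0$, the extra term of \eqref{e2.7} never appears, and the $c^2$-cancellation (which is precisely infinitesimal scale invariance) is automatic. What each buys: the paper's route yields the more general formula \eqref{e2.7} (useful in its own right), while yours is cleaner for \eqref{e2.6} itself and makes explicit a cancellation the paper leaves implicit. Two small points to nail down in your version: since $\mathcal{F}_p$ is exactly dilation-invariant, $\frac{d^2}{d\varepsilon^2}\mathcal{F}_p(\lambda(\varepsilon)K_\varepsilon)=\frac{d^2}{d\varepsilon^2}\mathcal{F}_p(K_\varepsilon)$ identically, so no error-term bookkeeping is needed; and you should check that Santal\'o-centering commutes with dilation, i.e. $\xi_{\lambda K}=\lambda\,\xi_K$ from the homogeneity of \eqref{e2.3}, so that $h^{(2)}_{\lambda(\varepsilon)K_\varepsilon}=\lambda(\varepsilon)h^{(2)}_{K_\varepsilon}$ and the rescaling shifts the variation field by exactly $\dot\lambda(0)h_K$ (here $K=K^{(2)}$ at the solution). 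The residual constant $(n+1)^{-\frac{n+1}{p}}$ you absorb into $V^{-\frac{n+1}{p}}(K)$ matches the paper's own convention in \eqref{e2.7}--\eqref{e2.6}.
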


\begin{proof}
   Taking twice differentiation on the function ${\mathcal{F}}_p(K_\varepsilon)={\mathcal{F}}_p(K^{(2)}_\varepsilon)$ with respect to $\varepsilon$, we obtain that
   \begin{eqnarray}\nonumber\label{e2.7}
     \frac{d^2}{d\varepsilon^2}\bigg|_{\varepsilon=0}{\mathcal{F}}_p(K_\varepsilon)&=&V^{-\frac{n+1}{p}}(K)\int_{{\S}^n}U^{ij}_K(\phi_{ij}+\phi\delta_{ij})\phi\\
      &&+(p-n-1)V^{-\frac{p+n+1}{p}}(K)\left(\int_{{\S}^n}h_K^{p-1}\phi\right)^2\\ \nonumber
      &&-(p-1)V^{-\frac{n+1}{p}}(K)\int_{{\S}^n}h_K^{p-2}\phi^2,
   \end{eqnarray}
where we have used the first variation of ${\mathcal{F}}_p(K_\varepsilon)$ at $\varepsilon=0$ vanishes for solutions $K$ of \eqref{e2.2}. Noting that $E^{(1)}, E^{(2)}, \phi^\perp$ are orthonormal to each other under both the inner product $\langle f,g\rangle=\int_{{\S}^n}h_K^{-1}fgd\sigma_K$ and the inner product $\langle f,g\rangle_*=\int_{{\S}^n}U^{ij}_K(f_{ij}+f\delta_{ij})g$ thanks to the operator in the eigenvalue problem \eqref{e2.1} being self-adjoint, \eqref{e2.6} follows. Actually, noting that $\phi\perp E^{(2)}$ by definition, there holds $\phi=\ell_0h_K+\phi^{\perp}$ for some constant $\ell_0$. Therefore,
   \begin{eqnarray*}
    \int_{{\mathbb{S}}^n}U^{ij}_K(\phi_{ij}+\phi\delta_{ij})\phi&=&\ell_0^2\int_{{\mathbb{S}}^n}U^{ij}_K(h_{K,ij}+h_K\delta_{ij})h_K+\int_{{\mathbb{S}}^n}U^{ij}_K(\phi^{\perp}_{ij}+\phi^{\perp}\delta_{ij})\phi^{\perp}\\
     &=&n\ell_0^2\int_{{\mathbb{S}}^n}h_K\det(\nabla^2h_K+h_KI)+\int_{{\mathbb{S}}^n}U^{ij}_K(\phi^{\perp}_{ij}+\phi^{\perp}\delta_{ij})\phi^{\perp}\\
     &=&n\ell_0^2\int_{{\mathbb{S}}^n}h_K^{p}+\int_{{\mathbb{S}}^n}U^{ij}_K(\phi^{\perp}_{ij}+\phi^{\perp}\delta_{ij})\phi^{\perp}.
   \end{eqnarray*}
Similarly, there hold
   $$
    \int_{{\mathbb{S}}^n}h_K^{p-1}\phi=\ell_0\int_{{\mathbb{S}}^n}h_K^{p-1}
   $$
and
   $$
    \int_{{\mathbb{S}}^n}h_K^{p-2}\phi^2=\ell_0^2\int_{{\mathbb{S}}^n}h_K^p+\int_{{\mathbb{S}}^n}h_K^{p-2}{\phi^{\perp}}^2.
   $$
The proof of \eqref{e2.6} has been done.
\end{proof}

Combining Proposition \ref{p2.3} with Proposition \ref{p2.2} yields the following $p$-Blaschke-Santal\'{o} functional inequality with remainder.

\begin{coro}\label{c2.1}
  Letting $K$ be a solution of \eqref{e2.2} or equivalently a critical point of the $p$-Blaschke-Santal\'{o} functional, there holds the following inequality
    \begin{eqnarray}\nonumber\label{e2.8}
      V^{\frac{n+1}{p}}(K)[{\mathcal{F}}_p(K_\varepsilon)-{\mathcal{F}}_p(K)]&\leq&-\frac{\lambda_3(K)+p-1}{2}\varepsilon^2\int_{{\S}^n}h_K^{p-2}{\phi^\perp}^2\\
      &&+o(\varepsilon^2), \ \
       \forall\varepsilon
    \end{eqnarray}
  with remainder, where $o(\varepsilon^2)$ is an higher-order infinitesimal with respect to $\varepsilon^2$, and $\lambda_3(K)$ is the third eigenvalue of the linearized equation \eqref{e2.1} at the point $K$.
\end{coro}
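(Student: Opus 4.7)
The plan is to obtain the inequality as a second-order Taylor expansion of $\mathcal{F}_p(K_\varepsilon)$ at $\varepsilon=0$, killing the linear term via Proposition~\ref{p2.2} and bounding the quadratic term via Proposition~\ref{p2.3} together with the variational characterization of $\lambda_3(K)$. Concretely, since $K$ solves \eqref{e2.2}, Proposition~\ref{p2.2} tells us that $K$ is a critical point of $\mathcal{F}_p$, so $\frac{d}{d\varepsilon}\big|_{\varepsilon=0}\mathcal{F}_p(K_\varepsilon)=0$ and
\[
\mathcal{F}_p(K_\varepsilon)-\mathcal{F}_p(K)=\tfrac{1}{2}\,\frac{d^2}{d\varepsilon^2}\Big|_{\varepsilon=0}\mathcal{F}_p(K_\varepsilon)\,\varepsilon^2+o(\varepsilon^2).
\]
Then I would plug in the second-variation formula \eqref{e2.6} from Proposition~\ref{p2.3}, which already expresses everything in terms of the orthonormal component $\phi^\perp$ with respect to $E^{(1,2)}$.

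The substantive step is estimating the elliptic quadratic form $\int_{\S^n} U^{ij}_K(\phi^\perp_{ij}+\phi^\perp\delta_{ij})\phi^\perp$. Since $h_K$ solves \eqref{e1.2}, the linearized operator $-U^{ij}_K(\nabla^2_{ij}+\delta_{ij}\cdot)$ is self-adjoint with respect to the weight $h_K^{p-2}d\sigma$ (equivalently, under the inner product $\lan f,g\ran=\int_{\S^n} h_K^{-1}fg\,d\sigma_K$), and its spectrum begins with $\lambda_1=-n$ on $E^{(1)}=\mathrm{span}\{h_K\}$, $\lambda_2=0$ on $E^{(2)}=\mathrm{span}\{x_1,\dots,x_{n+1}\}$, and then $\lambda_3(K)$ on the orthogonal complement $E^{(1,2)\perp}$. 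Because $\phi^\perp\in E^{(1,2)\perp}$, the min-max/Rayleigh principle yields
\[
\int_{\S^n} -U^{ij}_K(\phi^\perp_{ij}+\phi^\perp\delta_{ij})\phi^\perp \;\ge\; \lambda_3(K)\int_{\S^n} h_K^{p-2}(\phi^\perp)^2.
\]
Inserting this bound into \eqref{e2.6} collapses the two integrals into a single multiple of $\int_{\S^n} h_K^{p-2}(\phi^\perp)^2$ with coefficient $-V^{-\frac{n+1}{p}}(K)\bigl[\lambda_3(K)+p-1\bigr]$, and multiplying through by $V^{\frac{n+1}{p}}(K)$ gives exactly \eqref{e2.8}.

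The only point requiring some care is making sure the decomposition of the variation is consistent: one has to check that the map $\varepsilon\mapsto\phi^\perp_\varepsilon$ (the component of $\tfrac{d}{d\varepsilon}h_\varepsilon^{(2)}$ orthogonal to $E^{(1,2)}$) is smooth in $\varepsilon$, so that the Taylor expansion in $\varepsilon$ at $\varepsilon=0$ is justified and the correction from the second-derivative datum $\varphi$ really appears only in the first-variation term—which is already zero by Proposition~\ref{p2.2}. This is why $\varphi$ does not enter \eqref{e2.8}. I do not expect any serious obstacle beyond bookkeeping, because all three ingredients (criticality, second-variation formula, Rayleigh characterization of $\lambda_3$) have been set up in the preceding propositions.
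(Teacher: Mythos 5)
Your proposal is correct and follows essentially the same route as the paper: criticality from Proposition \ref{p2.2} kills the first-order term, the second-variation formula \eqref{e2.6} of Proposition \ref{p2.3} gives the quadratic term, and the spectral inequality $\int_{{\S}^n}U^{ij}_K({\phi^\perp}_{ij}+{\phi^\perp}\delta_{ij}){\phi^\perp}\leq-\lambda_3(K)\int_{{\S}^n}h_K^{p-2}{\phi^\perp}^2$ for \eqref{e2.1}, combined with Taylor expansion, yields \eqref{e2.8}. No substantive difference from the paper's argument.
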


It is remarkable that variant Sobolev inequalities with remainder can be found in \cite{BE91,CW98,FZ22-1,FN19,GW99}. The readers may also refer to the works \cite{Bo10,BH10,Fu15,FZ22-2,Iv15} for stability of the other inequalities in convex geometry.

\begin{proof}
  \eqref{e2.8} is a direct consequence of Propositions \ref{p2.2} and \ref{p2.3} by using the inequality
     $$
      \int_{{\S}^n}U^{ij}_K({\phi^\perp}_{ij}+{\phi^\perp}\delta_{ij}){\phi^\perp}\leq-\lambda_3(K)\int_{{\S}^n}h_K^{p-2}{\phi^\perp}^2
     $$
  for the linearized eigen-problem \eqref{e2.1}, and then applying the Taylor's expansion.
\end{proof}

\subsection{Combined Inequality \eqref{e1.3} implies reverse $p$-Blaschke-Santal\'{o} functional inequality}

In this subsection, we will show the following reverse $p$-Blaschke-Santal\'{o} functional inequality assuming the Combined Inequality \eqref{e1.3}.

\begin{prop}\label{p2.4}
 Assuming the Combined Inequality \eqref{e1.3} holds for some positive constant $\Theta^*_\sigma$, there will be a reverse $p$-Blaschke-Santal\'{o} functional inequality
    \begin{equation}\label{e2.9}
    \begin{cases}
     \displaystyle \sum_{l=1}^{n+1}V^{\frac{n+1}{p}}(K)[{\mathcal{F}}_p(K^{(l)}_\varepsilon)-{\mathcal{F}}_p(K)]\geq-\vartheta_\sigma(n,p,K)\varepsilon^2\int_{{\S}^n}|Z_K^\perp|^2dV_K-o(\varepsilon^2),\\[5pt]
     \displaystyle   \vartheta_\sigma(n,p,K):=\frac{(n+p+1)(p+1)\lambda_3(K)}{2n\Theta^*_\sigma}+\frac{p-1}{2},
    \end{cases}
    \end{equation}
 where $K^{(l)}_\varepsilon$ is a variant of $K$ with respect to
  $$
    \phi=\phi^{(l)}=h_KZ^{(l)}_K, \ \ Z_K=(Z_K^{(1)},\ldots, Z^{(n+1)}_K)
  $$
  for $l=1,2,\ldots,n+1$, and
   \begin{equation}\label{e2.10}
   Z_K^\perp =
    Z_K -\int_{{\S }^n}Z_K  dV_K{\bigg/}\int_{{\S }^n}dV_K-\sum_{\alpha=1}^{n+1}\frac{X_\alpha}{h_K}\int_{{\S }^n}\frac{X_\alpha}{h_K}Z_K dV_K{\bigg/}\int_{{\S }^n}\bigg|\frac{X_\alpha}{h_K}\bigg|^2dV_K
   \end{equation}
  holds for an orthonormal frame $\{X_\alpha\}_{\alpha=1}^{n+1}$ of $E^{(2)}$.
\end{prop}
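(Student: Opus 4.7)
The plan is to apply Proposition~\ref{p2.3} to the $n+1$ variations $\phi^{(l)} = h_K Z_K^{(l)}$, $l = 1,\ldots,n+1$, sum the resulting second variations, and invoke the Combined Inequality~\eqref{e1.3} to convert the curvature quadratic form into a bound purely in $\int|Z_K^\perp|^2\,dV_K$. A preliminary step identifies the orthogonal projection of $\phi^{(l)}$ onto the complement of $E^{(1,2)}$ with respect to the inner product $\langle f,g\rangle = \int_{\S^n}h_K^{p-2}fg\,d\sigma$ as exactly $h_K Z_K^{(l),\perp}$, with $Z_K^{(l),\perp}$ the $l$-th coordinate of \eqref{e2.10}; this relies on the Kazdan--Warner identity $\int_{\S^n}h_K^{p-1}x_\alpha\,d\sigma = 0$, valid at any solution $K$ of \eqref{e1.2}. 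Using $h_K^p\,d\sigma = dV_K$ and the pointwise identity $\sum_l(\phi^{(l),\perp})^2 = h_K^2|Z_K^\perp|^2$, the mass contribution collapses cleanly to $\sum_l\int_{\S^n}h_K^{p-2}(\phi^{(l),\perp})^2\,d\sigma = \int_{\S^n}|Z_K^\perp|^2\,dV_K$.

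The heart of the argument is the direct computation of $\sum_l\int_{\S^n}U^{ij}_K(\phi^{(l)}_{ij} + \phi^{(l)}\delta_{ij})\phi^{(l)}\,d\sigma$. Differentiating $Z_K = h_Kx + \nabla h_K$ on $\S^n$ yields the Gauss-map identity $\nabla_iZ_K = A_{ij}e_j$, which in turn produces the contractions $\sum_lZ_K^{(l)}\nabla_iZ_K^{(l)} = A_{ij}h_{K,j}$ and $\sum_l\nabla_iZ_K^{(l)}\nabla_jZ_K^{(l)} = (A^2)_{ij}$. Combining these with the cofactor relation $U_K^{ij}A_{jk} = h_K^{p-1}\delta^i_k$ (from \eqref{e1.2}), the divergence-free property $\nabla_jU^{ij}_K = 0$, and two Minkowski-type integrations by parts---one relating $\int U^{ii}_Kh_K^4$ to $\int U^{ij}_Kh_{K,i}h_{K,j}h_K^2$ and $\int h_K^{p+2}\,d\sigma$, and one relating $\int U^{ii}_Kh_K^2|\nabla h_K|^2$ to $\int U^{ij}_Kh_{K,i}h_{K,j}|\nabla h_K|^2$, $\int U^{ij}_Kh_{K,i}h_{K,j}h_K^2$, and $\int|\nabla h_K|^2\,dV_K$---I expect the intermediate quartic integrals to cancel and leave the clean identity
\[
\sum_{l=1}^{n+1}\int_{\S^n}U^{ij}_K(\phi^{(l)}_{ij} + \phi^{(l)}\delta_{ij})\phi^{(l)}\,d\sigma = (n+p+1)\int_{\S^n}|\nabla h_K|^2\,dV_K.
\]
Projecting onto $E^{(1,2)\perp}$ then subtracts the contribution $n\bigl|\int_{\S^n}Z_K\,dV_K\bigr|^2\big/\int_{\S^n}dV_K$, the Kazdan--Warner identity ensuring that the $x_\alpha$-projections contribute zero.

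The Combined Inequality~\eqref{e1.3}, rearranged as $\int|\nabla h_K|^2\,dV_K \leq (\lambda_3(K)/\Theta^*_\sigma)\int|Z_K^\perp|^2\,dV_K$, together with $(n+p+1)(p+1)>0$ for $p<-n-1$, upgrades the $\int|\nabla h_K|^2\,dV_K$ term into a multiple of $\int|Z_K^\perp|^2\,dV_K$ with the precise factor $(n+p+1)(p+1)\lambda_3(K)/(n\Theta^*_\sigma)$ that matches $\vartheta_\sigma$. Combined with the $-(p-1)\int|Z_K^\perp|^2\,dV_K$ mass contribution and the Taylor expansion $V^{(n+1)/p}(K)[\mathcal{F}_p(K_\varepsilon^{(l)}) - \mathcal{F}_p(K)] = \tfrac{\varepsilon^2}{2}\cdot(\text{second variation}) + o(\varepsilon^2)$ (first variation vanishing at the critical point $K$ by Proposition~\ref{p2.2}), the reverse inequality \eqref{e2.9} follows with the stated $\vartheta_\sigma(n,p,K)$.

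The main obstacle is the detailed integration-by-parts bookkeeping in the central step. Because $\phi^{(l)} = h_KZ_K^{(l)}$ is quadratic in $h_K$, its Hessian produces five distinct classes of quartic integrals in $h_K$, $\nabla h_K$ and $A$; the delicate coefficient matching that makes only the clean term $(n+p+1)\int|\nabla h_K|^2\,dV_K$ survive hinges critically on the cofactor identity and on the solution equation. The handling of the $E^{(1,2)}$-projection correction $-n\bigl|\int Z_K\,dV_K\bigr|^2\big/\int dV_K$ is automatic when $K$ is strongly symmetric (as then $\int Z_K\,dV_K = 0$ by invariance under the polytope symmetry group), but for general $K$ requires either absorption via the Combined Inequality or an additional estimate relating $\bigl|\int Z_K\,dV_K\bigr|^2$ to $\int|Z_K^\perp|^2\,dV_K$.
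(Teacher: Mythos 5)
Your overall architecture matches the paper's: apply Proposition \ref{p2.3} to $\phi^{(l)}=h_KZ_K^{(l)}$, sum over $l$, establish the identity $\sum_l\int_{\S^n}U^{ij}_K(\phi^{(l)}_{ij}+\phi^{(l)}\delta_{ij})\phi^{(l)}=(n+p+1)\int_{\S^n}|\nabla h_K|^2dV_K$ (which is correct, and is what Lemmas \ref{l2.1}--\ref{l2.2} give, the projection subtracting $n\big|\int_{\S^n}Z_KdV_K\big|^2\big/\int_{\S^n}dV_K$), and then feed in the Combined Inequality \eqref{e1.3}. However, there is a genuine gap at exactly the point you flag at the end: the treatment of the correction term $-n\big|\int_{\S^n}Z_K\,dV_K\big|^2\big/\int_{\S^n}dV_K$ for a general (not origin-centered) solution $K$, and with it the justification of the precise constant $\vartheta_\sigma$. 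If you apply \eqref{e1.3} only to the term $(n+p+1)\int|\nabla h_K|^2dV_K$, as your third paragraph describes, you obtain the factor $(n+p+1)\lambda_3(K)/\Theta^*_\sigma$, not the stated $(n+p+1)(p+1)\lambda_3(K)/(n\Theta^*_\sigma)$; the extra factor $(p+1)/n$ cannot appear from that term alone, and your closing sentence leaves its actual source (``absorption via the Combined Inequality or an additional estimate'') undetermined.

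The missing ingredient is the paper's Lemma \ref{l2.3}: by two applications of the divergence theorem (the second using $dV_K=h_K^p\,d\sigma$ at a solution, so that $n\int_{\S^n}Z_KdV_K=-\int_{\S^n}h_K^{p+1}\Delta x+n\int_{\S^n}h_K^p\nabla h_K$), one has
\begin{equation*}
\int_{\S^n}Z_K\,dV_K=(n+2)\int_Ky\,dy=\frac{n+p+1}{n}\int_{\S^n}\nabla h_K\,dV_K .
\end{equation*}
Combined with Cauchy--Schwarz, $\big|\int_{\S^n}\nabla h_K\,dV_K\big|^2\le\int_{\S^n}|\nabla h_K|^2dV_K\int_{\S^n}dV_K$, this gives $-n\big|\int_{\S^n}Z_KdV_K\big|^2\big/\int_{\S^n}dV_K\ge-\frac{(n+p+1)^2}{n}\int_{\S^n}|\nabla h_K|^2dV_K$, so the full quadratic form is bounded below by $\big[(n+p+1)-\frac{(n+p+1)^2}{n}\big]\int|\nabla h_K|^2dV_K=-\frac{(n+p+1)(p+1)}{n}\int|\nabla h_K|^2dV_K$. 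Only now does \eqref{e1.3}, written as $\int|\nabla h_K|^2dV_K\le\frac{\lambda_3(K)}{\Theta^*_\sigma}\int|Z_K^\perp|^2dV_K$ and multiplied by the negative coefficient $-\frac{(n+p+1)(p+1)}{n}$, produce exactly the factor $\frac{(n+p+1)(p+1)\lambda_3(K)}{n\Theta^*_\sigma}$ appearing in $\vartheta_\sigma$ after the Taylor expansion with the $\tfrac12$. Without this identity your argument only closes in the origin-centered (e.g.\ strongly symmetric) case, where $\int_{\S^n}Z_KdV_K=0$, and the constant you would obtain there is $(n+p+1)\lambda_3/(2\Theta^*_\sigma)-\frac{p-1}{2}$ rather than the stated $\vartheta_\sigma$ (this happens to be stronger, but it does not establish \eqref{e2.9} for general $K$, which is what Theorem \ref{t1.1} requires). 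The central unprojected identity you propose to verify by direct integration by parts is correct, though the paper reaches it more economically by writing $\phi=h_K\varphi$, $\varphi=Z_K^{(l)}$, and using $\partial_iZ^{(l)}=A_{ij}\langle e_j,E_l\rangle$ in a diagonalizing frame.
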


\noindent The proof of Proposition \ref{p2.4} will be decomposed into several lemmas.

\begin{lemm}\label{l2.1}
 Setting $\phi=h_K\varphi$, we have
   \begin{eqnarray}\nonumber\label{e2.11}
     \int_{{\S}^n}U^{ij}_K({\phi^\perp}_{ij}+{\phi^\perp}\delta_{ij}){\phi^\perp}&=&-\int_{{\S }^n}U^{ij}_Kh_K^2\varphi_i\varphi_j+n\int_{{\S }^n}\varphi^2dV_K\\
     &&-n\left(\int_{{\S }^n}\varphi dV_K\right)^2{\bigg/}\int_{{\S }^n}dV_K.
   \end{eqnarray}
\end{lemm}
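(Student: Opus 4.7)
The plan is to evaluate the quadratic form $\int_{\mathbb{S}^n} U^{ij}_K(\phi^\perp_{ij}+\phi^\perp\delta_{ij})\phi^\perp\,d\sigma$ by first computing it on the raw function $\phi=h\varphi$ and then correcting for the orthogonal projection onto $E^{(1)}\cup E^{(2)}$. Setting $\mathcal{L}\psi:=U^{ij}_K(\psi_{ij}+\psi\delta_{ij})$, I would expand $\phi_{ij}+\phi\delta_{ij}=A_{ij}\varphi+h_i\varphi_j+h_j\varphi_i+h\varphi_{ij}$, contract with $U^{ij}_K$, and use the Aleksandrov identities $U^{ij}_K A_{ij}=n\sigma_K$ together with the divergence-free property $\nabla_j U^{ij}_K=0$ on $\mathbb{S}^n$. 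Multiplying by $\phi=h\varphi$ and integrating, one integration by parts on the $hU^{ij}_K\varphi_{ij}$ term produces $-U^{ij}_K h^2\varphi_i\varphi_j$ together with a first-derivative cross term that exactly cancels the $2U^{ij}_K h_i\varphi_j$ contribution from the expansion, by the symmetry of $U^{ij}_K$. This yields the intermediate identity
\[
\int_{\mathbb{S}^n} U^{ij}_K(\phi_{ij}+\phi\delta_{ij})\phi\,d\sigma=n\int_{\mathbb{S}^n}\varphi^2\,dV_K-\int_{\mathbb{S}^n} U^{ij}_K h^2\varphi_i\varphi_j\,d\sigma.
\]

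To pass from $\phi$ to $\phi^\perp$, I would decompose $\phi=\phi^\perp+\phi^{(1)}+\phi^{(2)}$ and exploit two structural facts. First, each coordinate $x_\alpha$ satisfies $\nabla^2_{ij}x_\alpha+x_\alpha\delta_{ij}=0$ on $\mathbb{S}^n$, so $\mathcal{L}(\phi^{(2)})=0$; this is exactly Aleksandrov's statement that $\lambda=0$ has eigenfunctions $\langle a,x\rangle$. Second, $\mathcal{L}$ is self-adjoint with respect to $d\sigma$, again by $\nabla_j U^{ij}_K=0$. Hence every mixed term involving $\phi^{(2)}$ vanishes after one integration by parts, and
\[
\int_{\mathbb{S}^n}\mathcal{L}(\phi^\perp)\phi^\perp\,d\sigma=\int_{\mathbb{S}^n}\mathcal{L}(\phi)\phi\,d\sigma-2\int_{\mathbb{S}^n}\mathcal{L}(\phi)\phi^{(1)}\,d\sigma+\int_{\mathbb{S}^n}\mathcal{L}(\phi^{(1)})\phi^{(1)}\,d\sigma.
\]
The weighted inner product $\langle f,g\rangle=\int h^{-1}fg\,d\sigma_K$ forces $\phi^{(1)}=\bar\varphi\,h$ with $\bar\varphi=\int\varphi\,dV_K\big/\int dV_K$, and $\mathcal{L}(h)=n\sigma_K$. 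The last two integrals are then elementary and combine to yield the residual $-n\,(\int\varphi\,dV_K)^2\big/\int dV_K$, and adding this to the intermediate identity gives precisely the formula stated in Lemma~\ref{l2.1}.

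The only nontrivial obstacle, and it is purely bookkeeping, is verifying the cancellation in the first step: one must check that the term $2\int hU^{ij}_K h_i\varphi_j\,\varphi\,d\sigma$ produced by expanding $\phi_{ij}+\phi\delta_{ij}$ is precisely cancelled by the cross term $2\int hU^{ij}_K h_j\varphi_i\,\varphi\,d\sigma$ coming from the integration by parts on $hU^{ij}_K\varphi_{ij}$, which requires $U^{ij}_K=U^{ji}_K$ and $\nabla_j U^{ij}_K=0$. Once this cancellation together with the kernel property $\mathcal{L}(x_\alpha)=0$ are in place, the orthogonal-projection step is a short algebraic computation.
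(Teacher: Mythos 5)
Your proposal is correct and follows essentially the same route as the paper: the paper likewise computes $\int U^{ij}_K(\phi_{ij}+\phi\delta_{ij})\phi$ for $\phi=h\varphi$ via integration by parts using $\nabla_jU^{ij}_K=0$ and $U^{ij}_KA_{ij}=n\sigma_K$, and then removes the $E^{(1)}$, $E^{(2)}$ contributions using the orthogonality/eigenfunction structure ($\mathcal{L}(x_\alpha)=0$, $\mathcal{L}(h_K)=n\sigma_K$), yielding exactly the stated identity. The only differences are cosmetic (order of expansion versus integration by parts, and your more explicit cross-term bookkeeping in the projection step).
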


\begin{proof}
 Noting that $\phi^\perp, \phi^{(1)}, \phi^{(2)}$ are orthonormal with respect to the inner product $\langle f,g\rangle_*=\int_{{\S}^n}U^{ij}_K(f_{ij}+f\delta_{ij})g$, there holds
   \begin{eqnarray*}
     \int_{{\S}^n}U^{ij}_K({\phi^\perp}_{ij}+{\phi^\perp}\delta_{ij}){\phi^\perp}&=&\int_{{\S}^n}U^{ij}_K({\phi}_{ij}+{\phi}\delta_{ij}){\phi}-n\left(\int_{{\S}^n}\phi d\sigma_K\right)^2\Big{/}\int_{{\S}^n}dV_K\\
     &=&-{\mathcal{X}}^1(\varphi)+{\mathcal{X}}^2(\varphi)
   \end{eqnarray*}
after using the Gauss-Weingarten's relation $x_{ij}+x\delta_{ij}=0$, where
 $${\begin{split}
   {\mathcal{X}}^1(\varphi)
    &=\int_{{\S }^n}U^{ij}_K(h_K\varphi)_i(h_K\varphi)_j,\\
   {\mathcal{X}}^2(\varphi)
   &=\int_{{\S }^n}U^{ij}_K\delta_{ij}h_K^2\varphi^2
       -n\left(\int_{{\S }^n}h_K\sigma_K \varphi\right)^2{\bigg/}\int_{{\S }^n}h_K\sigma_K.
\end{split}} $$
Noting that by divergence-free property $U^{ij}_{K,i}=0, \forall j$, we have
   \begin{eqnarray*}
     {\mathcal{X}}^1(\varphi)&=&\int_{{\S }^n}U^{ij}_Kh_K^2\varphi_i\varphi_j+\int_{{\S }^n}U^{ij}_Kh_{K,i}h_{K,j}\varphi^2+2\int_{{\S }^n}U^{ij}_Kh_Kh_{K,i}\varphi\varphi_j\\
     &=&\int_{{\S }^n}U^{ij}_Kh_K^2\varphi_i\varphi_j-\int_{{\S }^n}U^{ij}_Kh_Kh_{K,ij}\varphi^2\\
     &=&\int_{{\S }^n}U^{ij}_Kh_K^2\varphi_i\varphi_j-n\int_{{\S }^n}\varphi^2dV_K+\int_{{\S }^n}U^{ij}_K\delta_{ij}h_K^2\varphi^2.
   \end{eqnarray*}
This completes the proof of \eqref{e2.11}.
\end{proof}

Now, let $\{e_i\}_{i=1}^n$ be a local orthonormal frame of ${\S }^n$ such that the matrix $A=[\nabla^2h_K+h_KI]$ equals to a diagonal matrix $diag(\lambda)$ for $\lambda=(\lambda_1,\ldots,\lambda_n)$ at a given point $x_0\in\S^n$.
Let $\{E_l\}^{n+1}_{l=1}$ be the canonical orthonormal basis of ${\mathbb{R}}^{n+1}$ and denote $Z_K^{(l)} = \langle Z_K,E_l\rangle$. By Gauss-Weingarten relation $d_{e_i}Z_K=A_{ij}e_j$, we have $\partial_iZ_K^{(l)}=\langle d_{e_i}Z_K,E_l\rangle=\lambda_i\langle e_i,E_l\rangle$. Taking $\varphi=\varphi_l=Z_K^{(l)}$ for $l=1,2,\ldots,n+1$,
we have
  \begin{equation}
  {\begin{split}
   U^{ij}_K\partial_i\varphi_l\partial_j\varphi_l
    & =U^{ij}_K\partial_iZ^{(l)}_K\partial_jZ^{(l)}_K \\
    & =U^{ii}\lambda_i^2\langle e_i,E_l\rangle^2.
    \end{split}}
  \end{equation}
Summing on $l$, we obtain from cofactor identity $U^{ik}_K A_{kj}=\det(A)\delta_{ij}$ that
 \begin{eqnarray}\nonumber\label{e2.13}
    \sum_{l=1}^{n+1}\sum_{i,j=1}^{n}U^{ij}_K\partial_i \varphi_l\partial_j\varphi_l&=&\sum_{i=1}^nU^{ii}_K\lambda_i^2\\
    &=&\det(A)(\Delta h_K+nh_K).
 \end{eqnarray}
Substituting into Lemma \ref{l2.1}, we obtain a second lemma.

\begin{lemm}\label{l2.2}
  Taking $\phi=\phi^{(l)}=h_KZ_K^{(l)}$ for $l=1,2,\ldots,n+1$, we have
   \begin{eqnarray}\nonumber\label{e2.14}
    {\mathcal{X}}^3(K)&:=&\sum_{\phi=\phi^{(l)}}\int_{{\S}^n}U^{ij}_K({\phi^\perp}_{ij}+{\phi^\perp}\delta_{ij}){\phi^\perp}\\
    &=&(n+p+1)\int_{{\S}^n}|\nabla h_K|^2dV_K-n\left(\int_{{\S }^n}Z_K dV_K\right)^2{\bigg/}\int_{{\S }^n}dV_K.
   \end{eqnarray}
\end{lemm}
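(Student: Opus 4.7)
The plan is to apply Lemma \ref{l2.1} to each component, choosing $\varphi = Z_K^{(l)}$ (so $\phi = \phi^{(l)} = h_K Z_K^{(l)}$) for $l = 1,\ldots,n+1$, and then sum over $l$. For each fixed $l$, Lemma \ref{l2.1} yields three contributions: a gradient term $-\int U^{ij}_K h_K^2 \partial_i Z_K^{(l)} \partial_j Z_K^{(l)}\, d\sigma$, a zeroth-order term $n\int (Z_K^{(l)})^2\, dV_K$, and a mean-value correction $-n(\int Z_K^{(l)} dV_K)^2/\int dV_K$.

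When summing over $l$, the gradient contribution collapses via the pointwise scalar identity \eqref{e2.13} to $-\int_{\S^n} h_K^2 \det(A)(\Delta h_K + n h_K)\, d\sigma$. For the zeroth-order term I would use $Z_K = h_K x + \nabla h_K$ together with $x \perp \nabla h_K$ on $\S^n$ to obtain $\sum_l (Z_K^{(l)})^2 = |Z_K|^2 = h_K^2 + |\nabla h_K|^2$. The summed mean-value term is exactly $-n|\int_{\S^n} Z_K dV_K|^2/\int dV_K$, matching the second term on the right of \eqref{e2.14}.

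It then remains to reduce the first two contributions to $(n+p+1)\int |\nabla h_K|^2 dV_K$. Since $K$ solves \eqref{e1.2} I substitute $\det(A) = \sigma_K = h_K^{p-1}$, so that $dV_K = h_K^p\, d\sigma$. The combined first two terms become
\[
-\int_{\S^n} h_K^{p+1}(\Delta h_K + nh_K)\, d\sigma + n\int_{\S^n}(h_K^2 + |\nabla h_K|^2)\, dV_K,
\]
and the pieces $-n\int h_K^{p+2} d\sigma$ and $n\int h_K^2 dV_K = n\int h_K^{p+2} d\sigma$ cancel. A single spherical integration by parts
\[
-\int_{\S^n} h_K^{p+1}\Delta h_K\, d\sigma = (p+1)\int_{\S^n} h_K^p|\nabla h_K|^2\, d\sigma = (p+1)\int_{\S^n} |\nabla h_K|^2\, dV_K
\]
then combines with the surviving $n\int |\nabla h_K|^2 dV_K$ to produce the coefficient $n+p+1$ and finish \eqref{e2.14}.

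This plan involves essentially no obstacle: the only mildly delicate point is using \eqref{e2.13}, derived in a frame diagonalizing $A$ at a point, as a frame-free pointwise identity, which is legitimate because both sides are scalars. Everything else is substitution, use of the Monge--Amp\`ere relation \eqref{e1.2}, and a single integration by parts on $\S^n$.
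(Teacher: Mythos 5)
Your proposal is correct and follows essentially the same route as the paper: apply Lemma \ref{l2.1} with $\varphi=Z_K^{(l)}$, sum using \eqref{e2.13}, use $|Z_K|^2=h_K^2+|\nabla h_K|^2$, and reduce $\int_{\S^n}h_K^2\det(A)(\Delta h_K+nh_K)\,d\sigma$ via the solution relation $dV_K=h_K^p\,d\sigma$ and one integration by parts to $-(p+1)\int|\nabla h_K|^2dV_K+n\int h_K^2dV_K$, which is exactly the identity the paper quotes. Your explicit justification that \eqref{e2.13} is a frame-free scalar identity is a fair (and correct) added remark, but the argument itself coincides with the paper's.
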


\begin{proof}
   Substituting \eqref{e2.13} into \eqref{e2.11}, we derive the desired identity \eqref{e2.14} by using
     $$
      \int_{{\S}^n}h_K^2\det(A)(\Delta h_K+nh_K)=-(p+1)\int_{{\S}^n}|\nabla h_K|^2dV_K+n\int_{{\S}^n}h_K^2dV_K
     $$
  and $\int_{{\S}^n}|Z_K|^2dV_K=\int_{{\S}^n}h_K^2dV_K+\int_{{\S}^n}|\nabla^2h_K|^2dV_K$.
\end{proof}

The third lemma handle the term containing $\int_{{\S}^n}Z_KdV_K$.

\begin{lemm}\label{l2.3}
 For any convex body $K$ containing the origin, there holds
    \begin{eqnarray}\nonumber\label{e2.15}
     {\int_{{\S }^n}}Z_K dV_K&=&(n+2)\int_Ky dy\\
      &=&\frac{n+p+1}{n}{\int_{{\S }^n}}\nabla h_KdV_K.
    \end{eqnarray}
If $K$ is with origin-centroid, then the integral equals to zero.
\end{lemm}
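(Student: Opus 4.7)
The plan is to split the two equalities and use different tools: the first, $\int_{\mathbb{S}^n}Z_K\,dV_K=(n+2)\int_Ky\,dy$, is a pure divergence-theorem computation in $\mathbb{R}^{n+1}$ valid for any convex body containing the origin; the second, $(n+2)\int_Ky\,dy=\frac{n+p+1}{n}\int_{\mathbb{S}^n}\nabla h_K\,dV_K$, additionally needs the solution property $\det(\nabla^2h_K+h_KI)=h_K^{p-1}$, so that $dV_K=h_K^p\,d\sigma$.

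For the first equality, I would reinterpret the sphere integral as a boundary integral on $\partial K$ via the inverse Gauss map. The outer unit normal to $\partial K$ at $Z_K(x)$ is exactly $x$, the surface area element is $dA=\det(\nabla^2h_K+h_KI)\,d\sigma$, and $Z_K\cdot x=h_K$, so the integrand can be rewritten as $Z_K(Z_K\cdot\nu_K)\,dA$ on $\partial K$. Componentwise, $\operatorname{div}(y_i\,y)=(n+2)y_i$ in $\mathbb{R}^{n+1}$ since $\operatorname{div}(y)=n+1$, so the Euclidean divergence theorem gives $\int_{\partial K}z_i(z\cdot\nu_K)\,dA=(n+2)\int_Ky_i\,dy$, yielding the first equality.

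For the second equality, after substituting $dV_K=h_K^p\,d\sigma$ and splitting $Z_K=h_K\,x+\nabla h_K$, one obtains
$$\int_{\mathbb{S}^n}Z_K\,dV_K=\int_{\mathbb{S}^n}h_K^{p+1}\,x\,d\sigma+\int_{\mathbb{S}^n}h_K^p\,\nabla h_K\,d\sigma.$$
The key tool is the sphere identity $\int_{\mathbb{S}^n}\nabla f\,d\sigma=n\int_{\mathbb{S}^n}f\,x\,d\sigma$, which I would derive by integrating $\operatorname{div}_{\mathbb{S}^n}(f\,\nabla^{\mathbb{S}^n}x_\alpha)$ over $\mathbb{S}^n$ and using $\nabla^{\mathbb{S}^n}x_\alpha=E_\alpha-x_\alpha x$ together with $\Delta_{\mathbb{S}^n}x_\alpha=-nx_\alpha$. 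Applied to $f=h_K^{p+1}/(p+1)$, this yields $\int h_K^p\,\nabla h_K\,d\sigma=\frac{n}{p+1}\int h_K^{p+1}\,x\,d\sigma$; combining with the split above and recombining $h_K^p\,d\sigma$ into $dV_K$ produces the factor $\frac{n+p+1}{n}$.

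The last assertion is immediate: if the centroid of $K$ is the origin, then $\int_Ky\,dy=0$, so both end integrals in the chain vanish. No serious obstacle arises here; the only care needed is tracking the exponent $p+1$ correctly across the gradient and remembering that $\nabla h_K$ in the statement denotes the tangential gradient on $\mathbb{S}^n$ viewed as an $\mathbb{R}^{n+1}$-valued field.
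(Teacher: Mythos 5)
Your proposal is correct and follows essentially the same route as the paper: the first equality is obtained by the identical $\mathbb{R}^{n+1}$ divergence-theorem computation after rewriting $\int_{\mathbb{S}^n}Z_K\,dV_K$ as $\int_{\partial K}p\,\langle p,\nu(p)\rangle\,d\mathcal{H}^n(p)$, and the second by the same spherical integration by parts based on $\Delta_{\mathbb{S}^n}x=-nx$, which in your write-up is packaged as the identity $\int_{\mathbb{S}^n}\nabla f\,d\sigma=n\int_{\mathbb{S}^n}f\,x\,d\sigma$ applied to $f=h_K^{p+1}/(p+1)$. Your remark that the second equality uses the solution property $dV_K=h_K^p\,d\sigma$ is consistent with the paper, whose proof also substitutes $h_K^p$ at that step even though the lemma is phrased for general convex bodies.
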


\begin{proof} For any given vector $w\in{\mathbb{R}}^{n+1}$, by the divergence theorem, we have
  \begin{eqnarray*}
    {\int_{{\S }^n}}\langle Z_K,w\rangle dV_K&=&\int_{\partial K}\langle p,w\rangle\langle p,\nu(p)\rangle d{\mathcal{H}}^n(p)\\
    &=&\int_Kdiv_{{\mathbb{R}}^{n+1}}\big[\langle y,w\rangle y\big]dy\\
    &=&(n+2)\int_K\langle y,w\rangle dy.
  \end{eqnarray*}
 To show the second identity in \eqref{e2.15}, we use the divergence theorem again to deduce
  \begin{eqnarray*}
   n{\int_{{\S }^n}}Z_KdV_K&=&n{\int_{{\S }^n}}h_K^p(h_K(x)x+\nabla h_K)\\
    &=&-{\int_{{\S }^n}}h_K^{p+1}\Delta x+n{\int_{{\S }^n}}h_K^p\nabla h_K\\
    &=&(n+p+1){\int_{{\S }^n}}h_K^p\nabla h_K.
  \end{eqnarray*}
The conclusion of the Lemma has been shown.
\end{proof}

\begin{proof}[Proof of Proposition \ref{p2.4}]
  Now, \eqref{e2.9} of Proposition \ref{p2.4} is a corollary of Lemma \ref{l2.2}, Lemma \ref{l2.3}, Young's inequality and Taylor's expansion. Actually, after submitting Lemmas \ref{l2.2} and \ref{l2.3} into the formula \eqref{e2.6}, we conclude that
     \begin{eqnarray*}
      \sum_{l=1}^{n+1}\frac{d^2}{d\varepsilon^2}\Big|_{\varepsilon=0}{\mathcal{F}}_p(K^{(l)}_\varepsilon)&=&(n+p+1)V(K)^{-\frac{n+1}{p}}\int_{{\mathbb{S}}^n}|\nabla h_K|^2dV_K\\
      &&-\frac{(n+p+1)^2}{n}V(K)^{-\frac{n+1}{p}}\frac{\left(\int_{{\mathbb{S}}^n}\nabla h_KdV_K\right)^2}{\int_{{\mathbb{S}}^n} dV_K}\\
      &&-(p-1)V(K)^{-\frac{n+1}{p}}\int_{{\mathbb{S}}^n}|Z_K^{\perp}|^2dV_K\\
      &\geq&-\frac{(n+p+1)(p-1)}{n}V(K)^{-\frac{n+1}{p}}\int_{{\mathbb{S}}^n}|\nabla h_K|^2dV_K\\
     &&-(p-1)V(K)^{-\frac{n+1}{p}}\int_{{\mathbb{S}}^n}|Z_K^{\perp}|^2dV_K\\
      &\geq&-\left[\frac{(n+p+1)(p-1)\lambda_3(K)}{n\Theta^*_\sigma}+(p-1)\right]\int_{{\mathbb{S}}^n}|Z_K^\perp|^2dV_K
     \end{eqnarray*}
  holds by Young's inequality
    $$
     \left(\int_{{\mathbb{S}}^n}\nabla h_KdV_K\right)^2\Bigg/\int_{{\mathbb{S}}^n} dV_K\leq\int_{{\mathbb{S}}^n}|\nabla h_K|^2dV_K,
    $$
  and assuming the Combined Inequality \eqref{e1.3} is true. Thus, the conclusion of Proposition \ref{p2.4} follows from the Taylor's expansion.
\end{proof}

\begin{proof}[Proof of Theorem \ref{t1.1}]
  Noting that for $p\in(-n-1-\sigma',-n-1)$, if one takes $\phi=h_KZ_K^{(l)}$ for $l=1,2,\ldots,n+1$ in Corollary \ref{c2.1}, it follows from \eqref{e2.8} that
     \begin{equation}
      \sum_{l=1}^{n+1}V^{\frac{n+1}{p}}(K)[{\mathcal{F}}_p(K^{(l)}_\varepsilon)-{\mathcal{F}}_p(K)]<-\vartheta_\sigma(n,p,K)\varepsilon^2\int_{{\S}^n}|Z_K^\perp|^2dV_K+o(\varepsilon^2).
     \end{equation}
  After comparing with the reverse $p$-Blaschke-Santal\'{o} functional inequality \eqref{e2.9}, we conclude that $Z^\perp_K\equiv0$ holds for all $x\in{\S}^n$.

\begin{lemm}\label{l2.4}
  For dimension $n\geq2$, the convex body $K$ containing the origin inside satisfies $Z_K^\perp=0$ for all $x\in{\S }^n$ is equivalent to $K$ is an origin-centering ellipsoid $E(\mu)$ for some $\mu$ after rotation.
\end{lemm}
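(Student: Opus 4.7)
The plan is to translate the vanishing $Z_{K}^{\perp}\equiv 0$ into a rigid PDE on the $1$-positively homogeneous extension $\bar h(y):=|y|\,h_K(y/|y|)$ of $h_K$ off the sphere, and then exploit the resulting curl condition to force $K$ to be an origin-centred ellipsoid.

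First, combining the definition \eqref{e2.10} with the Kazdan--Warner identity $\int_{\mathbb{S}^{n}}(x_{\alpha}/h_{K})\,dV_{K}=0$ recalled just after Theorem \ref{t1.3}, the condition $Z_{K}^{\perp}\equiv 0$ translates into the pointwise identity
\[
    Z_{K}(x)\;=\;a\;+\;\frac{Cx}{h_{K}(x)}, \qquad x\in\mathbb{S}^{n},
\]
where $a:=\int_{\mathbb{S}^{n}}Z_{K}\,dV_{K}/\int_{\mathbb{S}^{n}}dV_{K}\in\mathbb{R}^{n+1}$ and $C\in\mathbb{R}^{(n+1)\times(n+1)}$ is the matrix whose $\alpha$-th column equals $\int(x_{\alpha}/h_{K})Z_{K}\,dV_{K}/\int(x_{\alpha}/h_{K})^{2}\,dV_{K}$. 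The family $\{1,x_{1}/h_{K},\dots,x_{n+1}/h_{K}\}$ is linearly independent as soon as $K$ is not a single point, so this reduction is well defined. Recognising $Z_{K}(x)=h_{K}(x)\,x+\nabla h_{K}(x)$ as the full Euclidean gradient of $\bar h$ at $x\in\mathbb{S}^{n}$ and invoking the $0$-homogeneity of $\nabla\bar h$, the identity extends off the sphere to
\[
    \nabla\!\bigl(\tfrac{1}{2}\bar h^{2}\bigr)(y)\;=\;a\,\bar h(y)+Cy, \qquad y\in\mathbb{R}^{n+1}\setminus\{0\}.
\]

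The right-hand side is a gradient, so equating mixed partials yields the pointwise curl identity
\[
    a_{j}\,\partial_{i}\bar h(y)-a_{i}\,\partial_{j}\bar h(y)\;=\;C_{ji}-C_{ij},\qquad 1\le i,j\le n+1.
\]
The decisive step is to deduce from this that $a=0$. Otherwise, rotate coordinates so that $a=|a|\,e_{1}$; then for every $i\ge 2$ the identity forces $\partial_{i}\bar h$ to be a constant $b_{i}$ on all of $\mathbb{R}^{n+1}\setminus\{0\}$. Integrating, $\bar h(y)=g(y_{1})+\sum_{i\ge 2}b_{i}y_{i}$ for some one-variable function $g$. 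The $1$-homogeneity of $\bar h$ forces $g$ to be $1$-positively homogeneous on $\mathbb{R}$, while the $C^{2}$-regularity of $\bar h$ at points $y$ with $y_{1}=0$ and $(y_{2},\dots,y_{n+1})\ne 0$ (a set that is non-empty precisely because $n\ge 2$) pins $g$ to be linear. Hence $\bar h$ itself is linear, making $K$ a single point and contradicting the assumption that the origin lies in the interior of $K$. I expect this rigidity step to be the main obstacle, as it parlays a single pointwise algebraic constraint on $\nabla\bar h$ into a global shape restriction and genuinely uses both the smoothness of the support function and the non-degeneracy of $K$.

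With $a=0$ established, the PDE reduces to $\nabla(\tfrac{1}{2}\bar h^{2})=Cy$, whose integration yields $\bar h(y)^{2}=\langle\tilde Cy,y\rangle$ with $\tilde C:=(C+C^{T})/2$. Strict positivity $\bar h>0$ on $\mathbb{R}^{n+1}\setminus\{0\}$ forces $\tilde C$ to be positive definite, so $h_{K}(x)=\sqrt{\langle\tilde Cx,x\rangle}=|\tilde C^{1/2}x|$ is the support function of an origin-centred ellipsoid, which after a rotation diagonalising $\tilde C$ takes the form $E(\mu)$ for a suitable $\mu$. For the converse, a direct calculation for $h_{E}(x)=|Ax|$ gives $Z_{E}=A^{2}x/h_{E}$, which lies in the span of $\{x_{\alpha}/h_{E}\}_{\alpha=1}^{n+1}$, while $\int_{\mathbb{S}^{n}}Z_{E}\,dV_{E}=0$ follows from the centroid identity of Lemma \ref{l2.3}; combined with the mirror-symmetries of $E$ which ensure orthogonality of $\{x_{\alpha}/h_{E}\}$ in $L^{2}(dV_{E})$, these facts feed into \eqref{e2.10} to yield $Z_{E}^{\perp}\equiv 0$.
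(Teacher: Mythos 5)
Your proposal is correct, and although it opens from the same pointwise identity $Z_K=a+Cx/h_K$ that the paper extracts from \eqref{e2.10} (the Kazdan--Warner identity you invoke is not actually needed for that reduction), the rigidity argument is genuinely different. The paper takes the inner product with $x$, solves the resulting quadratic to write $h_K=\langle A/2,x\rangle+\sqrt{x^TCx}$, i.e.\ an ellipsoid centered at $A/2$, and then kills the center by substituting back into the tangential components and evaluating \eqref{e2.18} at a point $x\perp A$. You instead pass to the one-homogeneous extension $\bar h$, rewrite the identity as $\nabla\big(\tfrac12\bar h^2\big)=a\bar h+Cy$, and exploit equality of mixed partials: if $a\neq0$ this forces $n$ of the first derivatives of $\bar h$ to be constant, and homogeneity together with $C^1$-regularity across the hyperplane $a^\perp$ (here $n\geq2$ supplies such nonzero points and the connectivity of the slice) collapses $\bar h$ to a linear function, contradicting $h_K>0$; once $a=0$, Euler's identity for the $2$-homogeneous function $\tfrac12\bar h^2$ gives $\bar h^2=\langle\tilde Cy,y\rangle$ with $\tilde C$ positive definite, hence an origin-centered ellipsoid. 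Your route avoids the square-root branch selection and the back-substitution step, at the mild cost of using $C^1$/$C^2$ regularity of $\bar h$ away from the origin (harmless, since the lemma is applied to smooth solutions of \eqref{e1.2}); you also make the converse explicit (orthogonality of $\{x_\alpha/h_E\}$ in $L^2(dV_E)$ from the mirror symmetries of the axis-aligned $E(\mu)$, together with the centroid identity of Lemma \ref{l2.3}), which the paper merely asserts. The only blemish is cosmetic: with $(Cy)_i=C_{ik}y_k$ the curl identity should read $a_j\partial_i\bar h-a_i\partial_j\bar h=C_{ij}-C_{ji}$; the sign plays no role since you only use that the right-hand side is constant.
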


\begin{proof} Supposing that $Z^\perp_K=0$, there must be
   \begin{equation}\label{e2.17}
     Z_K=h_K(x) x+\nabla h_K=A+Bx/h_K, \ \ \forall x\in{\S }^n
   \end{equation}
for a vector $A\in{\mathbb{R}}^{n+1}$ and a matrix $B\in GL(n+1)$.
Taking inner product on \eqref{e2.17} with $x$, we have $h_K=\langle A,x\rangle+x^TBx/h_K$, where the fact that support function $h$ is positive has been used for convex body containing the origin inside. Solving the quadratic equation yields that
  \begin{eqnarray*}
   h_K&=&\frac{\langle A,x\rangle\pm\sqrt{|\langle A,x\rangle|^2+4x^TBx}}{2}\\
   &=&\langle A/2,x\rangle\pm\sqrt{x^TCx}
  \end{eqnarray*}
for $C= AA^T/4+B$. Noting that if one takes some $x\in{\mathbb{S}}^n$ such that $\langle A,x\rangle=0$, the minus symbol in the above expression can be exclude since $K$ contains the origin inside. Henceforth, after rotation, the convex body corresponding to the support function $h_K$ given above is exactly an ellipsoid $E(\mu,z_0)$ for some $\mu$ and $z_0=A/2$. Nevertheless, substituting
   $$
    \nabla_ih_K=\langle A/2,x_{,i}\rangle+\frac{x^TCx_{,i}}{\sqrt{x^TCx}}, \ \ x_{,i}=\nabla_ix=e_i
   $$
into $Z=h_K(x)x+\nabla_ih_Kx_{,i}=A+Bx/h_K$, it yields that
   \begin{eqnarray}\nonumber\label{e2.18}
    0&=&\langle A/2,e_i\rangle-\frac{x^TCe_i}{\sqrt{x^TCx}}+\frac{x^TBe_i}{h_K}\\
    &=&\langle A/2,e_i\rangle-\langle A/2,x\rangle\frac{x^TCe_i+\langle A/2,e_i\rangle}{h_K\sqrt{x^TCx}}, \ \ \forall i=1,2,\ldots,n.
   \end{eqnarray}
Now, we claim that $A$ must zero. Otherwise, evaluating \eqref{e2.18} at a point $x\perp A, x\in{\S}^n$, we deduce that $\langle A, e_i\rangle=0$ holds for all $i=1,2,\ldots,n$. So, the only possibility is that $A=0$ since
  $$
   A\perp x, \ \ A\perp e_i, \ \ \forall i=1,2,\cdots,n
  $$
for a base $\{x, e_1,\cdots, e_n\}$ of ${\mathbb{R}}^{n+1}$. This is contradicting with our assumption $A\not=0$. Our claim holds true. Conversely, all ellipsoids $K=E(\mu)$ satisfy the identity $Z^\perp_K=0$ automatically. In fact, the support function of $K$ is given by $h_K(x)=\sqrt{x^TCx}$ for diagonal matrix $C=diag(\mu_1,\mu_2,\cdots,\mu_{n+1})$ with diagonal entries $\mu_1,\mu_2,\cdots,\mu_{n+1}$. After submitting $h_K$ into $Z_K=h_K(x)x+\nabla h_K$, we reach the conclusion $Z_K^\perp=0$. The proof of the lemma was done.
\end{proof}

Combining Lemma \ref{l2.4} with Lemma \ref{l5.2} for $p<-n-1$, we reach the conclusion that $K={\S}^n$. The proof of Theorem \ref{t1.1} has been done.
\end{proof}

\medskip

\section{Strongly symmetric solutions: Theorem \ref{t1.2}}

\noindent To show that the strongly symmetric solution of \eqref{e1.2} is unique in the full supercritical range $p\in(-2n-5,-n-1)$, a first step is to prove that a uniqueness result in a slightly supercritical range $p\in(-n-1-\sigma,-n-1)$ holds for some $\sigma>0$. By Theorem \ref{t1.1}, to achieve this goal, one needs to estimate both the third eigenvalue $\lambda_3(K)$ and the quotient $q(K)=\frac{\int_{{\S}^n}|Z_K^\perp|^2dV_K}{\int_{{\S}^n}|\nabla h_K|^2dV_K}$ in the Combined Inequality \eqref{e1.3} from below.

It is remarkable that the quotient $q(K)$ may not be bounded from below by a positive constant when $K$ is close to a non-spherical ellipsoid. Fortunately, when considering strongly symmetric solutions $K$, there does hold a desired lower bound. The key point is to show various Orthonormal Lemmas for group ${\mathcal{S}}(T)$ invariant functions in the next subsection. Before our discussion, let us recall different regular polytopes $T=T^{(k)}\subset{\R}^{n+1}$ listed in
    \begin{equation}
     \begin{cases}
      \mbox{When } n=1, \mbox{there exist } k\mbox{-regular polytopes for each } k\in{\mathbb{N}}, k\geq3.\\
      \mbox{When } n=2, \mbox{there exist only } k\mbox{-regular polytopes for } k=4, 6, 8, 12, 20.\\
      \mbox{When } n=3, \mbox{there exist only } k\mbox{-regular polytopes for } k=5, 8, 16, 24, 120, 600.\\
      \mbox{When } n\geq4, \mbox{there exist only } k\mbox{-regular polytopes for } k=n+2, 2n+2, 2^{n+1}.
     \end{cases}
    \end{equation}
with $k$ vertices on ${\R}^{n+1}$. The readers may refer to \cite{DWZ24} for some detailed argument. Since the identity \eqref{e1.4} is invariant under rotations on ${\mathbb{R}}^{n+1}$, the proof of Theorem \ref{t1.3} will be carried out firstly for regular polytope $T$ in good position defined as below.

\begin{defi}\label{d3.1}
  Let $e_i, i=1,2,\ldots, n+1$ denote axial unit vectors with a $1$ in the $i$-th position and $0$s elsewhere. Letting $T=\vartheta^{(k)}$ be a regular polytope in ${\mathbb{R}}^{n+1}$ with $k$ vertices, we call it to be in a good position supposing that

(1) when $k=2n+2$, if $T=\cup_{i=1}^{n+1}\{\pm e_i\}\subset{\mathbb{R}}^{n+1}$.

(2) When $k=2^{n+1}$, if $T=\{(\pm 1,\pm 1,\ldots,\pm1)\subset{\mathbb{R}}^{n+1}$.

(3) When $k=n+2$, we assume that $v_e=(1,1,\ldots,1)\in{\mathbb{R}}^{n+1}$ is one vertex of $T$. Moreover, $n+1$ different sides $L_i, i=1,2,\ldots,n+1$ starting from a same end point $v_e$ intersect with $n+1$ axial semi-lines $X_i=\{\lambda e_i, \lambda>0\}$ at $X^e_i=\lambda_ee_i$ for $i=1,2,\ldots,n+1$ respectively, where $\lambda_e\in{\mathbb{R}}^+$ is a positive constant determined below.

(4) When $n=2, k=12, 20$, the good position of $T$ is defined in the proof of Lemma \ref{l3.6}. When $n=3, k=24,120,600$, the good position of $T$ will be defined in the proof of Lemmas \ref{l3.7}-\ref{l3.8}.
\end{defi}

\begin{rema}
  Let us first remark the existence of such regular simplex $T=\vartheta^{(n+2)}$ in good position. Starting with a small regular simplex $T_r$ with one vertex $v_r=rv_e$ and centroid $O_r$ for $r>0$, we assume that $v_r, O_r, O$ are collinear. Using a hyperplane $H$ perpendicular to the line $v_rO_rO$ to cut $T_r$, we derive a $n$-dimensional regular simplex $C$. Using another hyperplane $H'$ perpendicular to $v_rO_rO$ to cut the first hexagram ${\mathbb{Q}}_1$ of ${\mathbb{R}}^{n+1}$, we note that the section is also a $n$-dimensional regular simplex $C'$. Move $H'$ and $v_r$ such that $C$ and $C'$ coincide, by rotation if necessary. Extending the sides $L_i, i=1,2,\ldots,n+1$ from vertex $v_r$ of $T_r$, we can form a $n+1$-dimensional regular simplex $T'_r$ with origin centroid. After scaling $v_r$ to $v_e$, we derive a desired $n+1$ regular simplex in good position. {\hfill $\square$}
\end{rema}

\subsection{Orthonormal lemmas for strongly symmetric functions}

\noindent In this subsection, we will prove various orthonormal lemmas (or Theorem \ref{t1.3}) for strongly symmetric functions, which include \eqref{e1.4} as a special case. The first one concerns $n$-simplex, $n$-cross-polytope and $n$-hypercube for all dimensions $n\geq1$.

\begin{lemm}\label{l3.1}
For dimension $n\geq1$, let ${\mathcal{S}}={\mathcal{S}}(T)\subset O(n+1)$ be the symmetry group of a regular polytope $T=\vartheta^{(k)}$ in good position for
    \begin{equation}
      \begin{cases}
       k=n+2, & n-\mbox{simplex},\\
       k=2n+2, & n-\mbox{cross polytope},\\
       k=2^{n+1}, & n-\mbox{hypercube}.
      \end{cases}
    \end{equation}
Then, there holds the orthonormal relation
    \begin{eqnarray}\nonumber\label{e3.3} {\mathcal{W}}_{\alpha}(\zeta)&=&\int_{{\mathbb{S}}^n}x_\alpha\zeta(x)d\sigma=0,\ \ \forall\alpha=1,2,\ldots,n+1,\\
   {\mathcal{W}}_{\alpha\beta}(\zeta)&=&\int_{{\mathbb{S}}^n}x_\alpha x_\beta \zeta(x)d\sigma\\ \nonumber
   &=&\frac{\delta_{\alpha\beta}}{n+1}\int_{{\mathbb{S}}^n}\zeta(x)d\sigma, \ \  \forall\alpha,\beta=1,2,\ldots,n+1
    \end{eqnarray}
for all group ${\mathcal{S}}$ invariant functions $\zeta$.
\end{lemm}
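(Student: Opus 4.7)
The plan is to recognize $\mathcal{W}_\alpha$ and $\mathcal{W}_{\alpha\beta}$ as the Cartesian components of $\mathcal{S}(T)$-equivariant tensors, and then classify such tensors by using the geometry of each polytope. Introduce the linear functional $\mathcal{W}(v) = \int_{\mathbb{S}^n}\langle v,x\rangle\,\zeta(x)\,d\sigma$ and the symmetric bilinear form $\mathcal{Q}(v,w) = \int_{\mathbb{S}^n}\langle v,x\rangle\langle w,x\rangle\,\zeta(x)\,d\sigma$ on $\mathbb{R}^{n+1}$. Because $\zeta$ is $\mathcal{S}(T)$-invariant and rotations preserve $d\sigma$, the change of variable $x\mapsto\rho^{-1}x$ gives $\mathcal{W}(\rho v)=\mathcal{W}(v)$ and $\mathcal{Q}(\rho v,\rho w)=\mathcal{Q}(v,w)$ for every $\rho\in\mathcal{S}(T)$. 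Since $\mathcal{W}_\alpha=\mathcal{W}(e_\alpha)$ and $\mathcal{W}_{\alpha\beta}=\mathcal{Q}(e_\alpha,e_\beta)$, it suffices to classify invariant linear functionals and invariant symmetric bilinear forms on the standard representation of $\mathcal{S}(T)$.

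For the cross-polytope ($k=2n+2$, vertices $\pm e_i$) and the hypercube ($k=2^{n+1}$, vertices $(\pm 1,\ldots,\pm 1)$), the symmetry group contains all coordinate sign changes $x_i\mapsto\varepsilon_i x_i$ with $\varepsilon_i\in\{\pm 1\}$, together with the permutation group $S_{n+1}$ acting on coordinates. Applying a single sign flip $x_i\mapsto -x_i$ to the defining integrals gives $\mathcal{W}_i=-\mathcal{W}_i$, and $\mathcal{W}_{\alpha\beta}=-\mathcal{W}_{\alpha\beta}$ whenever exactly one of $\alpha,\beta$ equals $i$; all linear integrals and all off-diagonal quadratic integrals therefore vanish. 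Permutation invariance then forces $\mathcal{W}_{\alpha\alpha}$ to be independent of $\alpha$, and summing over $\alpha$ together with $\sum_\alpha x_\alpha^2\equiv 1$ on $\mathbb{S}^n$ pins down the common value $\tfrac{1}{n+1}\int_{\mathbb{S}^n}\zeta\,d\sigma$.

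For the regular simplex ($k=n+2$) elementary sign flips are unavailable, since $\mathcal{S}(T)\cong S_{n+2}$ acts purely by permuting the $n+2$ vertices. Here I would invoke the classical fact that the linear extension of this vertex action is the standard $(n+1)$-dimensional irreducible representation of $S_{n+2}$, which is absolutely irreducible and has no nonzero fixed vector. By Schur's lemma over $\mathbb{R}$, every $\mathcal{S}(T)$-invariant linear functional on $\mathbb{R}^{n+1}$ vanishes, and every $\mathcal{S}(T)$-invariant symmetric bilinear form is a scalar multiple of the Euclidean inner product. Applied to $\mathcal{W}$ and $\mathcal{Q}$, this yields $\mathcal{W}_\alpha\equiv 0$ and $\mathcal{W}_{\alpha\beta}=c\,\delta_{\alpha\beta}$, and the constant $c$ is again fixed by taking the trace, as in the previous case.

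The only delicate point, which I expect to be the main bookkeeping obstacle, is verifying that the ``good position'' prescribed in Definition \ref{d3.1} genuinely embeds the regular simplex so that $S_{n+2}$ acts by orthogonal transformations of $\mathbb{R}^{n+1}$ via the standard representation. This amounts to checking that the scaling parameter $\lambda_e$ in the construction is chosen so that all edge lengths are equal and the centroid lies at the origin, which is exactly the computation outlined in the Remark following the Definition. Once this is in place, the representation-theoretic step is immediate and the same trace normalization closes all three cases simultaneously.
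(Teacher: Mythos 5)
Your proposal is correct, and for the decisive case of the simplex it takes a genuinely different route from the paper. For the hypercube and cross-polytope your argument (coordinate sign flips plus coordinate permutations, then the trace identity $\sum_\alpha x_\alpha^2\equiv 1$) is essentially identical to the paper's. For the simplex $T=\vartheta^{(n+2)}$, however, the paper does not use representation theory: it argues by induction on dimension, slicing the sphere by hyperplanes orthogonal to a fixed vertex direction (Lemmas \ref{l3.3}--\ref{l3.4}), exploiting the cyclic symmetry $\phi_0$ of Lemma \ref{l3.2}, and then checking explicit non-degeneracy conditions \eqref{e3.8} and \eqref{e3.10} on the coordinates of a vertex of the good-position simplex, which requires the explicit computation of $\lambda_e$ and the vertex coordinates in Lemma \ref{l3.5}. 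Your argument instead packages $\mathcal{W}_\alpha$ and $\mathcal{W}_{\alpha\beta}$ as an invariant linear functional and an invariant symmetric bilinear form, and kills/diagonalizes them by Schur's lemma applied to the standard $(n+1)$-dimensional representation of $S_{n+2}$, which is absolutely irreducible and fixed-point free; the normalization again comes from the trace. This is shorter, treats $n=1$ and $n\geq2$ uniformly (the paper handles $n=1$ by separate explicit rotations), and would in fact cover Lemmas \ref{l3.6}--\ref{l3.8} as well, since the symmetry group of any regular polytope acts irreducibly on $\mathbb{R}^{n+1}$. What the paper's elementary approach buys in exchange is self-containedness and the explicit good-position data (vertex coordinates, $\lambda_e$) that it sets up in Definition \ref{d3.1} and Lemma \ref{l3.5}. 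Your one flagged point is the right one: you must know that $\mathcal{S}(T)\subset O(n+1)$ realizes all of $S_{n+2}$ linearly, i.e.\ that the good-position simplex is regular with centroid at the origin, which is exactly what the Remark after Definition \ref{d3.1} provides; with that in hand your representation-theoretic step is complete.
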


\begin{proof}
  For $T=\vartheta^{(2^{n+1})}$ which is a hypercube in good position, \eqref{e3.3} follows because the mappings defined by
   $$
    \begin{cases}
    \phi_i(x_i)=-x_i, \ \ \phi_i(x_j)=x_j, & \forall i, \forall j\not=i,\\
    \phi_{i,j}(x_i)=x_j, \ \ \phi_{i,j}(x_j)=x_i, \ \ \phi_{i,j}(x_k)=x_k, & \forall i,j, \forall k\not=i,j
    \end{cases}
   $$
 belong to ${\mathcal{S}}={\mathcal{S}}(T)$. Therefore, for any $\alpha\not=\beta$,
    \begin{eqnarray*} {\mathcal{W}}_{\alpha}(\zeta)&=&\frac{1}{2}\int_{{\mathbb{S}}^n}[x_\alpha+(-x_\alpha)]\zeta(x)d\sigma=0,\\
    {\mathcal{W}}_{\alpha\beta}(\zeta)&=&\frac{1}{2}\int_{{\mathbb{S}}^n}[x_\alpha+(-x_\alpha)]x_\beta\zeta(x)d\sigma=0.
   \end{eqnarray*}
 Moreover, given $\alpha$, there holds
    \begin{eqnarray*}
     {\mathcal{W}}_{\alpha\alpha}(\zeta)&=&\frac{1}{n+1}\int_{{\mathbb{S}}^n}|x|^2\zeta(x)d\sigma\\
     &=&\frac{1}{n+1}\int_{{\mathbb{S}}^n}\zeta(x)d\sigma
    \end{eqnarray*}
 by applying the mappings $\phi_{\alpha,\beta}$ repeatedly for $\beta\not=\alpha$. The validity of \eqref{e3.3} in case of $T=\vartheta^{(2n+2)}$ follows directly from the conclusion of $T=\vartheta^{(2^{n+1})}$ by duality, and the observation that the dual regular polytopes adhere a same symmetry group. Now, it remains to consider the case $T=\vartheta^{(n+2)}$. We prove by Mathematical Induction. When $n=1$, noting that the anti-clockwise rotation with angle $2j\pi/3$ belongs to the symmetry group ${\mathcal{S}}(T)$ for each integer $j$, there holds
  \begin{eqnarray*}
     \int^{2\pi}_0\sin\theta\zeta(\theta)&=&\int^{2\pi}_0\sin\left(\theta\pm\frac{2\pi}{3}\right)\zeta(\theta)\\
      &=&\int^{2\pi}_0\left(-\frac{1}{2}\sin\theta\pm\frac{\sqrt{3}}{2}\cos\theta\right)\zeta(\theta).
   \end{eqnarray*}
 Hence, we conclude from the above system
 that
   $$
    {\mathcal{W}}_{1}(\zeta)={\mathcal{W}}_{2}(\zeta)=0.
   $$
 Similarly, it follows from
   \begin{eqnarray*}
     \int^{2\pi}_0\sin2\theta\zeta(\theta)&=&\int^{2\pi}_0\sin\left(2\theta\pm\frac{2\pi}{3}\right)\zeta(\theta)\\
     &=&\int^{2\pi}_0\left(-\frac{1}{2}\sin2\theta\pm\frac{\sqrt{3}}{2}\cos2\theta\right)\zeta(\theta)
   \end{eqnarray*}
 that ${\mathcal{W}}_{12}(\zeta)=0$. Finally, the identity
  \begin{equation}\label{e3.4}
    {\mathcal{W}}_{11}(\zeta)={\mathcal{W}}_{22}(\zeta)=\frac{1}{2}\int^{2\pi}_0\zeta(\theta)
  \end{equation}
 is a consequence of the following lemma, which is valid for all dimensions.

 \begin{lemm}\label{l3.2}
  For dimension $n\geq1$, let ${\mathcal{S}}={\mathcal{S}}(T)\subset O(n+1)$ be the symmetry group of a regular simplex $T=\vartheta^{(n+2)}$ in good position. Then, the circle mappings $\phi_0^j, j\in{\mathbb{Z}}$ for $\phi_0\in O(n+1)$ defined by
    $$
     \phi_0(x_i)=x_{i+1}, \ \ \phi_0(x_{n+1})=x_1, \ \ \forall i=1,2,\ldots,n
    $$
  belong to ${\mathcal{S}}$.
 \end{lemm}

\begin{proof}[Proof of Lemma \ref{l3.2}.]
 We need only to show $T$ is invariant under $\phi_0$. Actually, let $L_i, i=1,2,\ldots,n+1$ be the sides of $T$ connecting the vertex $v_e$. Noting that $\phi_0$ maps side $L_i$ to $L_{i+1}$ for $i=1,2,\ldots,n$ and maps $L_{n+1}$ to $L_1$, it follows from high symmetry and rigidity of $T$ that $\phi_0$ leaves $T$ invariant. Actually, the symmetry group of a regular simplex is isomorphism to the permutation group of the vertices. The lemma is proven.
\end{proof}

Henceforth, \eqref{e3.4} follows from Lemma \ref{l3.2} for $n=1, x_1=\cos\theta, x_2=\sin\theta$ and
   $$
    {\mathcal{W}}_{11}(\zeta)+{\mathcal{W}}_{22}(\zeta)=\int^{2\pi}_0\zeta(\theta).
   $$
 The conclusion of Lemma \ref{l3.1} for dimension $n=1$ has been shown. To show the higher dimensional case $n\geq2$, we need a first reduction lemma.

\begin{lemm}\label{l3.3}
 Letting ${\mathcal{S}}^{(n)}\subset O(n)$ be the symmetry group of a $(n-1)-$dimensional regular simplex $T^{(n-1)}=\vartheta^{(n+1)}\subset{\mathbb{R}}^n$, assume the relation \eqref{e3.3} holds for group ${\mathcal{S}}^{(n)}$-invariant function $\zeta$ and $x_\alpha, x_\beta, \alpha,\beta=1,2,\ldots,n$. For any orthonormal basis $W=\{w_i\}_{i=1}^n\subset{\mathbb{R}}^n$, we denote the corresponding linear functions by $X_i=\langle w_i,x\rangle$ for each $i$. Then, there holds
    \begin{equation}\label{e3.5}
      \begin{cases}
       \displaystyle \int_{{\mathbb{S}}^{n-1}}X_i\zeta(x)d\sigma=0, & \forall i=1,2,\ldots,n,\\
      \displaystyle  \int_{{\mathbb{S}}^{n-1}}X_i X_j \zeta(x)d\sigma=\frac{\delta_{ij}}{n}\int_{{\mathbb{S}}^{n-1}}\zeta(x)d\sigma, & \forall i,j=1,2,\ldots,n.
      \end{cases}
    \end{equation}
\end{lemm}

\begin{proof}[Proof of Lemma \ref{l3.3}.]
 Regarding $w_i, i=1,2,\ldots,n$ as column vectors of a $n\times n$ matrix $W$, we have $W=[w_{i\alpha}]\in O(n)$ and $X_i=W_{i\alpha}x_\alpha$ for each $i$. Therefore, by \eqref{e3.3},
  \begin{eqnarray*}
   \int_{{\mathbb{S}}^{n-1}}X_i\zeta(x)d\sigma&=&W_{i\alpha}\int_{{\mathbb{S}}^{n-1}}x_\alpha\zeta(x)d\sigma=0,\\
   \int_{{\mathbb{S}}^{n-1}}X_iX_j\zeta(x)d\sigma&=&W_{i\alpha}W_{j\beta}\int_{{\mathbb{S}}^{n-1}}x_\alpha x_\beta\zeta(x)d\sigma=\frac{\delta_{ij}}{n}\int_{{\mathbb{S}}^{n-1}}\zeta(x)d\sigma
  \end{eqnarray*}
holds since $W\in O(n)$ is an orthonormal matrix. The proof of the lemma is completed.
\end{proof}

Let us continue our proof of Lemma \ref{l3.1} for $n+1$ regular simplex $T=\vartheta^{(n+2)}$. After rescaling, one may assume that all vertices of $T$ lie on the sphere ${\mathbb{S}}^n$. Fixing one vertex $v_{0}=(v_{0,1},v_{0,2},\ldots,v_{0,n+1})$ of $T$, we use a hyperplane
   $$
    H_r=\left\{x\in{\mathbb{R}}^{n+1}\bigg|\ \sum_{i=1}^{n+1}v_{0,i}x_i=r\right\}, \ \ r\in[-1,1]
   $$
to cut both $T, {\mathbb{S}}^n$, and obtain two possible sections $T_r, S_r\subset{\mathbb{S}}^n$ (when $r$ is close to $-1$, $T_r$ may be an empty set). Noting that $H_r$ is perpendicular to the line $Ov_0$, we know that $T_r$ is a $n$-dimensional regular simplex if $T_r\not=\emptyset$. To proceed further, let us state the second reduction lemma.

\begin{lemm}\label{l3.4}
  For dimension $n\geq2$ and $n+1$ regular simplex $T=\vartheta^{(n+2)}\subset{\mathbb{S}}^n$, we have the following two statements:

\noindent (1) when $T_r$ is non-empty, the symmetry group ${\mathcal{S}}_r={\mathcal{S}}_{T_r}\subset O(n)$ as a subgroup of $O(n)$ coincides with ${\mathcal{S}}_0$ (the symmetry group of a $n$ dimensional regular simplex) and is independent of $r$.

\noindent (2) When restricting $\zeta$ on $S_r$, it is a group ${\mathcal{S}}_{0}$ invariant function, even for the case of $T_r=\emptyset$.
\end{lemm}

\begin{proof}[Proof of Lemma \ref{l3.4}.]
  Part (1) of the lemma is clear. Actually, since $T_{r_1}$ and $T_{r_2}$ are similar $n$-regular simplex in a same position of ${\mathbb{R}}^n$ for different $r_1\not=r_2$, we know that ${\mathcal{S}}_{r_1}={\mathcal{S}}_{r_2}$. To show Part (2), we note that each mapping $\phi\in{\mathcal{S}}_0$ can be extended to a mapping in $\widetilde{\phi}\in{\mathcal{S}}(T)$ by defining it action on orthonormal subspace of the sections $T_r$ to be invariant. Actually, ${\mathcal{S}}_0$ can be regarded as a stabilizer subgroup of ${\mathcal{S}}(T)$. Noting that $\zeta$ is invariant under $\widetilde{\phi}$ and $S_r$ is invariant under $\widetilde{\phi}$, we conclude that when restricting $\zeta$ on $S_r$, it is $\phi$-invariant for each $\phi\in{\mathcal{S}}_0$. The proof of Part (2) is done.
\end{proof}

 Now, we choose an orthonormal basis
  $$
   \begin{cases}
  \displaystyle V_1=\left(v_{0,1},-v_{0,1}^2/v_{0,2},0,0,\ldots,0\right),\\[5pt] \displaystyle V_2=\left(v_{0,1},v_{0,2},-(v_{0,1}^2+v_{0,2}^2)/v_{0,3},0,\ldots,0\right),\\[5pt]
   V_j=(V_{j,1}, V_{j,2},\ldots, V_{j,i}, \ldots, V_{j,n+1}), \ \ j=3,4,\ldots,n
   \end{cases}
  $$
of $H_r$ for
  \begin{equation}
    V_{j,i}=\begin{cases}
      v_{0,i}, & i=1,2,\ldots,j,\\
       0, & i=j+2,j+3,\ldots,n+1,\\
      -\sum_{k=1}^{j}v_{0,k}^2/v_{0,j+1}, & i=j+1,
    \end{cases}
  \end{equation}
it follows from Lemmas \ref{l3.2}-\ref{l3.4} that
  \begin{eqnarray}\nonumber\label{e3.7}
  0&=&\int^1_{-1}\frac{1}{\sqrt{1-r^2}}\int_{S_r}\xi(x)\zeta(x)d\sigma_rdr\\ \nonumber
   &=&\int_{{\mathbb{S}}^n}\xi(x)\zeta(x)d\sigma\\
   &=&\int_{{\mathbb{S}}^n}{\mathcal{T}}_1(x)\zeta(x)d\sigma\\ \nonumber
   &=&{\mathcal{T}}_1\int_{{\mathbb{S}}^n}x_1\zeta(x)d\sigma,\\ \nonumber
   {\mathcal{T}}_1(x)&=&v_{0,1}x_1-v_{0,1}^2x_2/v_{0,2},\\ \nonumber
   {\mathcal{T}}_1&=&(v_{0,1}v_{0,2}-v_{0,1}^2)/v_{0,2},
  \end{eqnarray}
where $\xi=\langle V_1,x\rangle$ and $d\sigma_r$ denotes the induced measure of ${\mathcal{S}}_r$. Noting that $T$ has $n+2$ vertices, if the vertex $v_0$ is chosen such that
   \begin{equation}\label{e3.8}
     v_{0,1}\not=0, \ \ v_{0,2}\not=0, \ \ v_{0,1}\not=v_{0,2},
   \end{equation}
one can deduce from \eqref{e3.7} that $\int_{{\mathbb{S}}^n}x_1\zeta(x)d\sigma=0$. Using again Lemma \ref{l3.2}, we achieve the first identity of \eqref{e3.3}. To show the second identity of \eqref{e3.3}, we note that
  \begin{eqnarray}\nonumber\label{e3.9}
    0&=&\int^1_{-1}\frac{1}{\sqrt{1-r^2}}\int_{S_r}\xi(x)\eta(x)\zeta(x)d\sigma_rdr\\ \nonumber
    &=&\int_{{\mathbb{S}}^n}\xi(x)\eta(x)\zeta(x) d\sigma\\
     &=&\int_{{\mathbb{S}}^n}{\mathcal{T}}_2(x)\zeta(x)d\sigma\\ \nonumber
     &=&{\mathcal{T}}_2\int_{{\mathbb{S}}^n}x_1x_2\zeta(x)d\sigma, \\ \nonumber
     {\mathcal{T}}_2(x)&=&\left(v_{0,1}x_1-\frac{v_{0,1}^2}{v_{0,2}}x_2\right)\left(v_{0,1}x_1+v_{0,2}x_2-\frac{v_{0,1}^2+v_{0,2}^2}{v_{0,3}}x_3\right),\\ \nonumber {\mathcal{T}}_2&=&v_{0,1}(v_{0,2}-v_{0,1})\frac{v_{0,3}(v_{0,2}+v_{0,1})-(v_{0,1}^2+v_{0,2}^2)}{v_{0,2}v_{0,3}}
  \end{eqnarray}
holds for $\eta=\langle V_2,x\rangle$, where the symmetricity of $\zeta$ in Lemma \ref{l3.2} has also been used repeatedly. Now, supposing the chosen $v_0$ satisfies further that
  \begin{equation}\label{e3.10}
    v_{0,3}\not=0, \ \ v_{0,3}(v_{0,2}+v_{0,1})-(v_{0,1}^2+v_{0,2}^2)\not=0,
  \end{equation}
we deduce from \eqref{e3.9} that $\int_{{\mathbb{S}}^n}x_1x_2\zeta(x)d\sigma\not=0$. Hence, the desired second identity in \eqref{e3.3} is obtained by using Lemma \ref{l3.2} again repeatedly. Thus, the conclusion of our orthonormal Lemma \ref{l3.1} follows from the existence of vertex $v_0$ satisfying both \eqref{e3.8} and \eqref{e3.10}. Let us sum this existence result in the following lemma.

\begin{lemm}\label{l3.5}
  For dimension $n\geq2$, let $T=\vartheta^{(n+2)}\subset{\mathbb{R}}^{n+1}$ be the regular simplex in good position. Given $j\in[1,n+1]\cap{\mathbb{N}}$, we denote $v_j$ to be the vertex of $T$ located at the line $v_eX^e_j$. Then, the good position parameter $\lambda_e$ is given by $\lambda_e=\sqrt{n+2}-1$, and the coordinates of $v_j$ is given by
   \begin{equation}\label{e3.11}
     v_j=v_e+(\lambda_ee_j-v_e)\sqrt{\frac{n+2}{n+3-2\sqrt{n+2}}}
   \end{equation}
  for each $j$, where $v_e=(1,1,\ldots,1)$ has been defined in Definition \ref{d3.1} and $e_j$ is the $j-$th axial unit vector. As a corollary, if one chooses $v_0$ to be the vertex $v_2$, we have \eqref{e3.8} and \eqref{e3.10} is satisfied. Henceforth, it follows from \eqref{e3.7} and \eqref{e3.9} the desired orthonormal identity \eqref{e3.3}.
\end{lemm}

\begin{proof}
 After drawing a picture, for regular simplex $T$ in good position, the parameter $\lambda_e$ defined in Definition \ref{d3.1} is determined by
  \begin{eqnarray*}
   &&\sqrt{n+(1-\lambda_e)^2}=\lambda_e\sqrt{2}\\
   &\Leftrightarrow&\lambda_e=\sqrt{n+2}-1.
  \end{eqnarray*}
Assuming the length of side of $T$ is given by ${\mathcal{D}}(n+1)$, we claim that ${\mathcal{D}}(n+1)=\sqrt{2(n+2)}$. Actually, taking another $n+1$ regular simplex $T'(n+1)$ with inner radius ${\mathcal{I}}'(n+1)=1$, outer radius ${\mathcal{R}}'(n+1)=n+1$ and length of side ${\mathcal{D}}'(n+1)$. Regarding ${\mathcal{D}}'(n+1)T'(n)/{\mathcal{D}}'(n)$ as base of $T'(n+1)$, we get the relation
    \begin{eqnarray}\nonumber\label{e3.12}
     &&\displaystyle {\mathcal{D}}'^2(n+1)=(n+2)^2+\left[\frac{{\mathcal{D}}'(n+1){\mathcal{R}}'(n)}{{\mathcal{D}}'(n)}\right]^2\\
     &\Leftrightarrow&\displaystyle {\mathcal{D}}'(n+1)=\frac{(n+2){\mathcal{D}}'(n)}{\sqrt{{\mathcal{D}}'^2(n)-n^2}}.
    \end{eqnarray}
 Noting that ${\mathcal{D}}'(2)=2\sqrt{3}$, it is deduced from the recursive formula \eqref{e3.12} that
    \begin{equation}\label{e3.13}
    \begin{cases}
     {\mathcal{D}}'(n+1)=\sqrt{2(n+2)(n+1)},\\
     {\mathcal{R}}'(n+1)=n+1,\\
     {\mathcal{I}}'(n+1)=1.
    \end{cases}
    \end{equation}
 Comparing to the regular simplex $T$, the length of side, outer radius and inner radius are given by
   \begin{equation}\label{e3.14}
   \begin{cases}
     {\mathcal{D}}(n+1)={\mathcal{D}}'(n+1)/\sqrt{n+1},\\
     {\mathcal{R}}(n+1)=\sqrt{n+1},\\
     {\mathcal{I}}(n+1)=\sqrt{\frac{1}{n+1}}.
   \end{cases}
   \end{equation}
 Combining \eqref{e3.13} with \eqref{e3.14}, it follows from similarity that the claim ${\mathcal{D}}(n+1)=\sqrt{2(n+2)}$ is true. Now, using the fact that $v_j, X^e_j, v_e$ are collinear, it yields that
   \begin{eqnarray}\nonumber
    v_j-v_e&=&\frac{|v_ev_j|}{|v_eX^e_j|}(X^e_j-v_e)\\
       &=&(\lambda_ee_j-v_e)\sqrt{\frac{n+2}{n+3-2\sqrt{n+2}}}
   \end{eqnarray}
 and thus the desired identity \eqref{e3.9}. To show the final statement, we note that
   \begin{equation}
    \begin{cases}
   \displaystyle v_0=v_2=\left(1-\gamma_n, \frac{\sqrt{n+2}-n\gamma_n-2}{\sqrt{n+2}-2}, 1-\gamma_n,\ldots,1-\gamma_n\right),\\[10pt]
    \displaystyle\gamma_n=\sqrt{\frac{n+2}{n+3-2\sqrt{n+2}}}.
    \end{cases}
   \end{equation}
 Therefore, the conditions \eqref{e3.8} and \eqref{e3.10} hold true since
   $$
   \begin{cases}
    v_{0,1}\not=v_{0,2},\\
    v_{0,1}, v_{0,2}, v_{0,3}\not=0,\\
    v_{0,3}(v_{0,2}+v_{0,1})-(v_{0,1}^2+v_{0,2}^2)\not=0.
   \end{cases}
   $$
 The proof of Lemma \ref{l3.5} is completed.
\end{proof}

 Now, the orthonormal Lemma \ref{l3.1} follows from Lemma \ref{l3.5} as desired.
\end{proof}

Next, we consider the regular polyhedrons $T=\vartheta^{(k)}$ for $n=2, k=12, 20$ and show the following orthonormal lemma.

\begin{lemm}\label{l3.6}
 Letting ${\mathcal{S}}\subset O(3)$ be the symmetry group of a regular polytope $T=\vartheta^{(k)}$ for $n=2, k=12, 20$, the relations    \begin{equation}\label{e3.17}
      \begin{cases}
        \displaystyle{\mathcal{W}}_{\alpha}(\zeta)=0, & \forall\alpha=1,2,3,\\
        \displaystyle{\mathcal{W}}_{\alpha\beta}(\zeta)=\frac{\delta_{\alpha\beta}}{3}\int_{{\mathbb{S}}^n}\zeta(x)d\sigma, & \forall\alpha,\beta=1,2,3
      \end{cases}
    \end{equation}
 hold for all group ${\mathcal{S}}$ invariant functions $\zeta$.
\end{lemm}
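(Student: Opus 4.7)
The plan is to exploit the richness of the icosahedral symmetry group, which by duality is shared by both the regular dodecahedron ($k=12$) and the regular icosahedron ($k=20$). Since dual regular polytopes have identical symmetry groups, a single argument handles both cases. The good position will be fixed by orienting $T$ so that one five-fold rotation axis coincides with the $x_3$-axis and one two-fold axis lies along $x_1$; the precise normalization only matters insofar as it pins down explicit matrices for concrete rotations.

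Both $\vartheta^{(12)}$ and $\vartheta^{(20)}$ are centrally symmetric, so $-I \in \mathcal{S}$. The substitution $x \mapsto -x$ preserves $\zeta$ by $\mathcal{S}$-invariance and preserves $d\sigma$, giving $\mathcal{W}_\alpha(\zeta) = -\mathcal{W}_\alpha(\zeta)$ for each $\alpha$; this yields the linear identity in \eqref{e3.17} immediately.

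For the quadratic identities, introduce the symmetric matrix $M \in \mathbb{R}^{3\times 3}$ with entries $M_{\alpha\beta} = \mathcal{W}_{\alpha\beta}(\zeta)$. For any $g \in \mathcal{S} \subset O(3)$, the change of variables $y = gx$ together with invariance of $\zeta$ and $d\sigma$ yields $g^T M g = M$, so $M$ commutes with the whole natural representation of $\mathcal{S}$ on $\mathbb{R}^3$. The pivotal claim is that this representation is irreducible, i.e., $\mathcal{S}$ preserves no proper nonzero subspace of $\mathbb{R}^3$. Granting this, the eigenspaces of the symmetric matrix $M$ are themselves $\mathcal{S}$-invariant, so $M$ can have only one eigenvalue $\lambda$, i.e., $M = \lambda I$. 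Taking the trace and using $x_1^2+x_2^2+x_3^2 = 1$ on $\mathbb{S}^2$ gives $3\lambda = \int_{\mathbb{S}^2}\zeta\, d\sigma$, which is the second identity in \eqref{e3.17}.

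The main obstacle is verifying irreducibility, and I plan to do this by elementary geometry rather than by quoting representation theory. Pick a five-fold rotation $R_5 \in \mathcal{S}$ about an axis through a pair of antipodal vertices (for the icosahedron) or antipodal face centers (for the dodecahedron), and a three-fold rotation $R_3 \in \mathcal{S}$ about an axis that is not parallel to the axis of $R_5$; such an $R_3$ exists because the polytope has many face-center/vertex axes in distinct directions. Any nontrivial $\mathcal{S}$-invariant subspace $V \subsetneq \mathbb{R}^3$ would have to be invariant under both $R_5$ and $R_3$. If $\dim V = 1$, then $V$ must be the rotation axis of $R_5$, but then $R_3 V$ is a different line, contradicting invariance. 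If $\dim V = 2$, then $V^\perp$ is a one-dimensional $\mathcal{S}$-invariant subspace, which is excluded by the previous case. Thus $V \in \{0, \mathbb{R}^3\}$, irreducibility follows, and the proof concludes.
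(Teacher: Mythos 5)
Your argument is correct, but it takes a genuinely different route from the paper's. The paper fixes a concrete ``good position'': the regular dodecahedron ($k=20$) is placed so that it is built over a coordinate-aligned cube, whence the single-coordinate sign flips $\phi_i$ and the cyclic permutation $\phi_0$ of $(x_1,x_2,x_3)$ belong to $\mathcal{S}$; the identities \eqref{e3.17} then follow by averaging over these explicit maps exactly as in Lemma \ref{l3.1}, and the case $k=12$ is obtained by duality. You instead observe that $-I\in\mathcal{S}$ kills the linear moments, and that the moment matrix $M_{\alpha\beta}=\mathcal{W}_{\alpha\beta}(\zeta)$ commutes with the natural action of $\mathcal{S}$ on $\mathbb{R}^3$, so that irreducibility of this action --- which you verify by the elementary observation that a rotation of order $5$ (resp.\ $3$) preserves no line except its own axis, so no one- or two-dimensional subspace can be invariant --- forces $M=\lambda I$ by the eigenspace (real Schur) argument, with $\lambda$ pinned down by the trace identity $x_1^2+x_2^2+x_3^2=1$. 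Each step checks out: the antipodal map is indeed a symmetry of both polytopes, eigenspaces of a symmetric matrix commuting with the group are invariant, and the five-fold and three-fold axes are distinct. What your route buys is independence of the choice of good position (the conclusion $M=\lambda I$ is rotation-invariant, whereas the paper must, and does, specify the position inside this very proof) and a single uniform treatment of $k=12$ and $k=20$ without invoking duality; what the paper's route buys is uniformity of method with Lemmas \ref{l3.1}, \ref{l3.7} and \ref{l3.8}, all of which rest on exhibiting the same explicit sign-flip and permutation symmetries.
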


\begin{proof}
 We will give the proof of $k=20$ in spirit of the proof to Lemma \ref{l3.1}, the case $k=12$ follows from the case $k=20$ by duality.  As in the following figure, the regular dodecahedron ($k=20$) in this position is built on a cube in good position, by adding two points on each face of the cube. We will call this regular dodecahedron to be in good position, and call its dual icosahedron ($k=12$) to be in good position after scaling if necessary.
\begin{figure}[h]
    \centering
    \includegraphics[width=0.35\textwidth]{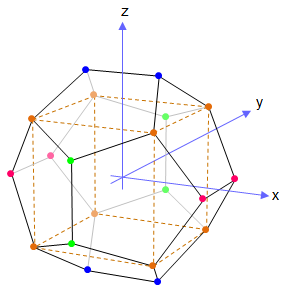}
    \caption{A regular dodecahedron in good position (source: Wikipedia)}
    \label{Dodecahedron}
\end{figure}
Therefore, the group ${\mathcal{S}}$ invariant functions (or convex bodies) are invariant under the mappings $\phi_0, \phi_i\in O(n+1), i=1,2,3$ satisfying  \begin{equation}\label{e3.18}
  \begin{cases}
   \phi_i(x_i)=-x_i, \ \ \phi_i(x_j)=x_j, \ \ \forall j\not=i,\\
   \phi_{0}(x_1)=x_2, \ \ \phi_{0}(x_2)=x_3, \ \ \phi_{0}(x_3)=x_1
  \end{cases}
  \end{equation}
for $i=1,2,3$ and circle mapping $\phi_0$. Therefore, \eqref{e3.17} follows from \eqref{e3.18} easily. Actually, acting $\phi_\alpha, \phi_{\alpha,\beta}$ at the integrals, we obtain that
   \begin{eqnarray}\nonumber\label{e3.19}
    {\mathcal{W}}_{\alpha}(\zeta)&=&\frac{1}{2}\int_{{\mathbb{S}}^2}[x_\alpha+(-x_\alpha)]\zeta(x)d\sigma, \ \ \forall\alpha,\\
    {\mathcal{W}}_{\alpha\alpha}(\zeta)&=&\int_{{\mathbb{S}}^2}x_\beta^2\zeta(x)d\sigma, \ \ \forall\alpha\not=\beta,\\ \nonumber
    {\mathcal{W}}_{\alpha\beta}(\zeta)&=&\frac{1}{2}\int_{{\mathbb{S}}^2}[x_\alpha+(-x_\alpha)] x_\beta\zeta(x)d\sigma=0,\ \ \forall\alpha\not=\beta.
   \end{eqnarray}
\end{proof}

\begin{lemm}\label{l3.7}
 Letting ${\mathcal{S}}\subset O(4)$ be the symmetry group of a regular polytope $T=\vartheta^{(k)}$ for $n=3, k=120, 600$, the relations    \begin{equation}\label{e3.20}
      \begin{cases}
        \displaystyle{\mathcal{W}}_{\alpha}(\zeta)=0, & \forall\alpha=1,2,3,4,\\
        \displaystyle{\mathcal{W}}_{\alpha\beta}(\zeta)=\frac{\delta_{\alpha\beta}}{4}\int_{{\mathbb{S}}^n}\zeta(x)d\sigma, & \forall\alpha,\beta=1,2,3,4
      \end{cases}
    \end{equation}
 hold for all group ${\mathcal{S}}$ invariant functions $\zeta$.
\end{lemm}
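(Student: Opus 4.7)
The plan is to mimic the strategy of Lemmas \ref{l3.1} and \ref{l3.6}: I first fix a convenient ``good position'' for the 600-cell $T = \vartheta^{(120)}$, identify an explicit collection of elementary symmetries (coordinate sign-changes together with a transitive group of coordinate permutations) that lie in $\mathcal{S}(T) = H_4 \subset O(4)$, and then repeat the algebraic manipulation already carried out in equation \eqref{e3.19}. The 120-cell $\vartheta^{(600)}$ in good position is then defined to be the polar dual of the 600-cell in good position (rescaled if necessary), and since dual regular polytopes share their symmetry group the same argument applies word for word.

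\textbf{Good position and verification of the symmetries.} I would place the 600-cell with its $120$ vertices split into three familiar families: the eight $16$-cell vertices $\pm e_i$ for $i = 1,\ldots,4$; the sixteen tesseract vertices $\frac{1}{2}(\pm 1, \pm 1, \pm 1, \pm 1)$; and the ninety-six ``golden'' vertices obtained by taking all sign combinations and all \emph{even} permutations of $\frac{1}{2}(\phi, 1, \phi^{-1}, 0)$, where $\phi = (1+\sqrt{5})/2$. From this explicit list one checks directly that:
\begin{itemize}
\item each coordinate sign-change $\phi_i : x_i \mapsto -x_i$ (fixing the other coordinates) maps each of the three families to itself, hence belongs to $\mathcal{S}(T)$;
\item the alternating group $A_4$, acting by permutation of coordinates, is contained in $\mathcal{S}(T)$: the first two families are invariant under the full symmetric group $S_4$, while the golden family is invariant under $A_4$ by construction. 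In particular, $A_4$ acts transitively on $\{1,2,3,4\}$.
\end{itemize}

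\textbf{Deducing the orthonormality.} With these symmetries in hand, the proof of \eqref{e3.20} proceeds exactly as in \eqref{e3.19}. Invariance of $\zeta$ under $\phi_\alpha$ gives
\[
\mathcal{W}_\alpha(\zeta) = \tfrac{1}{2}\int_{\mathbb{S}^3} \big[x_\alpha + (-x_\alpha)\big] \zeta \, d\sigma = 0,
\]
and the same substitution gives $\mathcal{W}_{\alpha\beta}(\zeta) = 0$ whenever $\alpha \ne \beta$. For the diagonal terms, transitivity of $A_4$ on $\{1,2,3,4\}$ produces, for every pair $\alpha, \beta$, a permutation $\sigma \in \mathcal{S}(T)$ mapping $x_\alpha$ to $x_\beta$; invariance of $\zeta$ under $\sigma$ then yields $\mathcal{W}_{\alpha\alpha}(\zeta) = \mathcal{W}_{\beta\beta}(\zeta)$, and summing on $\alpha$ together with $\sum_\alpha x_\alpha^2 = 1$ on $\mathbb{S}^3$ forces $\mathcal{W}_{\alpha\alpha}(\zeta) = \frac{1}{4}\int_{\mathbb{S}^3}\zeta\, d\sigma$.

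\textbf{Main obstacle.} The only non-routine point is the verification that the chosen good position really realizes $\phi_i$ and a transitive permutation group inside $H_4$; one must be careful because $H_4$ does \emph{not} contain $B_4$ as a subgroup, so the full symmetric group on coordinates is not available and one could a priori worry that coordinate-permutation symmetries are too scarce. The key observation is that we do not need all of $S_4$ --- only transitivity on the four coordinate axes --- and that $A_4$ is already transitive and compatible with the ``even permutations'' restriction in the golden family. Once this subtlety is handled, the rest of the argument is purely mechanical.
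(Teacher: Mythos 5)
Your proposal is correct and follows essentially the same route as the paper: fix the standard coordinates of the $600$-cell, check invariance under single coordinate sign changes together with a transitive family of even coordinate permutations (the paper uses the double transpositions $\phi_{i,j}\circ\phi_{k,l}$, you use all of $A_4$), run the averaging computation of \eqref{e3.19}, and handle $k=600$ by duality. Your explicit remark that odd permutations are \emph{not} symmetries, so only transitivity of the even permutations is needed, is exactly the point implicit in the paper's choice of $\phi_{i,j}\circ\phi_{k,l}$.
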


\begin{proof}
  We need only prove the lemma for $k=120$, the case $k=600$ follows by duality. As is well known, sixteen vertices of a standard  $T=\vartheta^{(120)}$ (hexacosichoron) are given by
   $$
    v_1,\ldots, v_{16}\in\{(\pm1/2,\pm1/2,\pm1/2,\pm1/2)\},
   $$
  eight vertices are given by permutations of
   $$
    v_{17},\ldots, v_{24}\in\{(0,0,0,\pm1), (0,0,\pm1,0), (0,\pm1,0,0), (\pm1,0,0,0)\}
   $$
  and the remaining ninety six vertices are given by even permutations mappings $EP$ and
   $$
    v_{25},\ldots, v_{120}\in EP\{(\pm1/2,\pm\varphi/2,\pm1/(2\varphi),0)\}, \ \ \varphi=\frac{\sqrt{5}+1}{2}.
   $$
  Henceforth, we will call this regular hexacosichoron and its dual regular hecatonicosachoron to be in good position. Note that the regular polytope $T$ is invariant under mappings $\phi_i, \phi_{i,j}\circ\phi_{k,l}$ for all different $i,j,k,l$, where $\phi_i, \phi_{i,j}$ are the mappings defined by
  \begin{equation}\label{e3.21}
  \begin{cases}
   \phi_i(x_i)=-x_i, \ \ \phi_i(x_j)=x_j, \ \ \forall j\not=i,\\
   \phi_{i,j}(x_i)=x_j, \ \ \phi_{i,j}(x_j)=x_i, \ \ \phi_{i,j}(x_k)=x_k, \ \ \forall k\not=i,j
  \end{cases}
  \end{equation}
   Hence, we still have
   \begin{equation}\label{e3.20}
   \begin{cases}
   \displaystyle {\mathcal{W}}_{\alpha}(\zeta)=\frac{1}{2}\int_{{\mathbb{S}}^3}[x_\alpha+(-x_\alpha)]\zeta(x)d\sigma=0, \ \ \forall\alpha,\\[5pt]
   \displaystyle   {\mathcal{W}}_{\alpha\alpha}(\zeta)={\mathcal{W}}_{\beta\beta}(\zeta), \ \ \forall\alpha\not=\beta,\\[5pt] \nonumber
    \displaystyle  {\mathcal{W}}_{\alpha\beta}(\zeta)=\frac{1}{2}\int_{{\mathbb{S}}^3}[x_\alpha+(-x_\alpha)] x_\beta\zeta(x)d\sigma=0,\ \ \forall\alpha\not=\beta
   \end{cases}
   \end{equation}
  by applying $\phi_\alpha, \phi_{\alpha,\beta}\circ\phi_{\gamma,\delta}$. The proof of the lemma is done.
\end{proof}

\begin{lemm}\label{l3.8}
 Letting ${\mathcal{S}}\subset O(4)$ be the symmetry group of a regular polytope $T=\vartheta^{(k)}$ for $n=3, k=24$, the relations
    \begin{equation}\label{e3.22}
      \begin{cases}
        \displaystyle{\mathcal{W}}_{\alpha}(\zeta)=0, & \forall\alpha=1,2,3,4,\\
        \displaystyle{\mathcal{W}}_{\alpha\beta}(\zeta)=\frac{\delta_{\alpha\beta}}{4}\int_{{\mathbb{S}}^n}\zeta(x)d\sigma, & \forall\alpha,\beta=1,2,3,4
      \end{cases}
    \end{equation}
 hold for all group ${\mathcal{S}}$ invariant functions $\zeta$.
\end{lemm}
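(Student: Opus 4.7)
The plan is to mimic the proofs of Lemmas \ref{l3.6}--\ref{l3.7} and exploit the fact that the 24-cell admits, as symmetries, both arbitrary sign changes $\phi_i$ and arbitrary coordinate transpositions $\phi_{i,j}$ from \eqref{e3.21}. Once these inclusions $\phi_i,\phi_{i,j}\in\mathcal{S}$ are established, the identities \eqref{e3.22} follow by the same symmetrization computation as in \eqref{e3.20}.

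First I would fix a good position for $T=\vartheta^{(24)}$ by taking its $24$ vertices to be
\begin{equation*}
V=\bigl\{\pm e_1,\pm e_2,\pm e_3,\pm e_4\bigr\}\cup\bigl\{(\pm 1/2,\pm 1/2,\pm 1/2,\pm 1/2)\bigr\}\subset{\mathbb S}^3,
\end{equation*}
i.e.\ the union of the vertices of a $4$-dimensional cross-polytope and those of a scaled tesseract (the classical realization of the 24-cell). Both subsets are manifestly invariant under every sign change $\phi_i$ and every coordinate swap $\phi_{i,j}$, hence so is $V$. Since $T$ is the convex hull of $V$, the rigidity of a regular polytope forces $\phi_i,\phi_{i,j}\in\mathcal{S}$ for all $i,j\in\{1,2,3,4\}$.

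Given a group-$\mathcal{S}$ invariant function $\zeta$, applying $\phi_\alpha$ inside the integrals immediately yields
\begin{equation*}
{\mathcal W}_\alpha(\zeta)=\frac12\int_{{\mathbb S}^3}\bigl[x_\alpha+(-x_\alpha)\bigr]\zeta(x)\,d\sigma=0,\qquad \forall \alpha=1,2,3,4,
\end{equation*}
and for $\alpha\neq\beta$
\begin{equation*}
{\mathcal W}_{\alpha\beta}(\zeta)=\frac12\int_{{\mathbb S}^3}\bigl[x_\alpha+(-x_\alpha)\bigr]x_\beta\,\zeta(x)\,d\sigma=0.
\end{equation*}
Next, applying the transposition $\phi_{\alpha,\beta}$ gives $\mathcal{W}_{\alpha\alpha}(\zeta)=\mathcal{W}_{\beta\beta}(\zeta)$ for all $\alpha,\beta$, so the four diagonal integrals are equal; summing and using $|x|^2=1$ on ${\mathbb S}^3$,
\begin{equation*}
4\,{\mathcal W}_{\alpha\alpha}(\zeta)=\sum_{\beta=1}^4\int_{{\mathbb S}^3}x_\beta^2\,\zeta(x)\,d\sigma=\int_{{\mathbb S}^3}\zeta(x)\,d\sigma,
\end{equation*}
which gives ${\mathcal W}_{\alpha\alpha}(\zeta)=\frac14\int_{{\mathbb S}^3}\zeta(x)\,d\sigma$, completing \eqref{e3.22}.

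There is no genuine obstacle in this case: unlike the $600$-cell of Lemma \ref{l3.7}, whose vertex coordinates involve the golden ratio $\varphi=(\sqrt5+1)/2$ and which therefore only tolerates the more restricted compositions $\phi_{i,j}\!\circ\!\phi_{k,l}$, the $24$-cell has rational vertex coordinates and admits both the full group of sign changes and the full symmetric group $S_4$ acting on coordinates. The only point that deserves a line of justification is that the set $V$ above does define a regular polytope with $24$ vertices (equivalently, that all pairwise distances between nearest neighbours in $V$ coincide, equal to $1$), which is standard and can be verified by a direct computation of $|e_i-\tfrac12\sum_j\varepsilon_j e_j|$ and $|\tfrac12\sum_j\varepsilon_j e_j-\tfrac12\sum_j\varepsilon_j' e_j|$.
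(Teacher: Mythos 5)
Your proposal is correct and follows essentially the same route as the paper: the paper also takes the 24-cell in the standard position with vertices $\{(\pm1,0,0,0),\dots\}\cup\{(\pm1/2,\pm1/2,\pm1/2,\pm1/2)\}$, notes its invariance under all sign changes $\phi_i$ and all transpositions $\phi_{i,j}$ from \eqref{e3.21}, and concludes \eqref{e3.22} by the same symmetrization computation as in \eqref{e3.20}.
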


\begin{proof}
 At first, we note that eight vertices of a standard  $T=\vartheta^{(24)}$ (icositetrachoron) are given by
   $$
    v_1,\ldots, v_{8}\in\{(\pm1,0,0,0), (0,\pm1,0,0), (0,0,\pm1,0), (0,0,0,\pm1)\},
   $$
 and the other sixteen vertices are given by
   $$
    v_{9},\ldots, v_{24}\in\{(\pm1/2,\pm1/2,\pm1/2,\pm1/2)\}.
   $$
We will call this regular icositetrachoron to be in good position.
 It is clear that $T=\vartheta^{(24)}$ is invariant under all mappings $\phi_i, \phi_{i,j}$ defined by \eqref{e3.21}. Thus, a same formula as \eqref{e3.20} holds for this regular polytope. This completes the proof.
\end{proof}

\subsection{Uniqueness of strongly symmetric solutions in a slightly supercritical range $p\in(-n-1-\sigma_n,-n-1)$}

\noindent In this subsection, let us prove the following weaker version of Theorem \ref{t1.2} for strongly symmetric solutions with respect to the symmetry group ${\mathcal{S}}(T)$ of a regular polytope $T\subset{\mathbb{R}}^{n+1}$.

\begin{prop}\label{p3.1}
Let dimension $n\geq2$ and consider \eqref{e1.2}. There exists a universal constant $\sigma_n>0$, such that for all $p\in(-n-1-\sigma_n,-n-1)$, the unique strongly symmetric solution of \eqref{e1.2} is given by $K={\mathbb{S}}^n$.
\end{prop}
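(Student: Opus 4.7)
The plan is to verify the Combined Inequality \eqref{e1.3} on the strongly symmetric class and then apply the criterion Theorem \ref{t1.1}. More precisely, I aim to show that
$$
 \Theta_{\mathrm{sym}}(p):=\inf_{K\in\mathcal{C}'_{n,p}(T)}\lambda_3(K)\,q(K)\ \ge\ \Theta^*_n>0
$$
uniformly on $p\in(-n-1-\sigma,-n-1)$ for some small universal $\sigma>0$ depending only on $n$. The proof of Theorem \ref{t1.1} restricts to the symmetric class provided one observes that the variations $\phi=h_KZ_K^{(l)}$ in Proposition \ref{p2.4} respect $\mathcal{S}(T)$-invariance and that the alternative $Z_K^\perp\equiv 0$ still forces $K={\S}^n$ via Lemma \ref{l2.4} together with the ellipsoid lemma for $p<-n-1$ cited in the proof of Theorem \ref{t1.1}. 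Hence Proposition \ref{p3.1} will follow once the Combined Inequality above is established.

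First, I would use the Orthonormal Theorem \ref{t1.3} to compute $q(K)$ explicitly for $K\in\mathcal{C}'_{n,p}(T)$. Since $h_K$ is $\mathcal{S}(T)$-invariant, so are the powers $h_K^{p},h_K^{p-1},h_K^{p-2}$, and Theorem \ref{t1.3} yields the diagonal identity $\int_{{\S}^n}x_\alpha x_\beta h_K^{p-2}d\sigma=\frac{\delta_{\alpha\beta}}{n+1}\int_{{\S}^n}h_K^{p-2}d\sigma$ together with the vanishing $\int_{{\S}^n}x_\alpha h_K^q d\sigma=0$ for each exponent $q$. Two consequences follow: \textit{(i)} the average $\int_{{\S}^n}Z_K\,dV_K$, being an $\mathcal{S}(T)$-invariant vector in ${\R}^{n+1}$, vanishes for every regular polytope $T$ covered by Lemmas \ref{l3.1} and \ref{l3.6}--\ref{l3.8}; \textit{(ii)} the family $\{X_\alpha/h_K\}_{\alpha=1}^{n+1}$ is pairwise orthogonal in $L^2(dV_K)$ with identical squared norms. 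The projection in \eqref{e2.10} therefore diagonalizes. Writing $Z_K^{(\beta)}=h_Kx_\beta+\langle\nabla h_K,E_\beta\rangle$ and combining the spherical identities $\Delta_{{\S}^n}x_\beta=-nx_\beta$ and $\langle\nabla x_\alpha,\nabla x_\beta\rangle=\delta_{\alpha\beta}-x_\alpha x_\beta$ with integration by parts, I expect an inequality of the form
$$
 \int_{{\S}^n}|Z_K^\perp|^2\,dV_K\ \ge\ c_n(p)\int_{{\S}^n}|\nabla h_K|^2\,dV_K,
$$
with $c_n(p)$ continuous and $c_n(-n-1)>0$, hence $q(K)\ge c_n^*>0$ for $p$ close to $-n-1$.

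Second, I would bound $\lambda_3(K)$ from below by a compactness argument. The parallel strongly-symmetric version of the a-priori estimate Theorem \ref{t1.5} (which extends to a neighborhood of $p=-n-1$ precisely because the non-spherical ellipsoid obstruction at the critical exponent is ruled out by the assumed symmetry) together with Chou--Wang/Caffarelli regularity for Monge--Amp\`ere-type equations provide uniform $C^{2,\alpha}$ bounds on $\bigcup_{p\in[-n-1-\sigma,-n-1]}\mathcal{C}'_{n,p}(T)$. Hence the operator $-U^{ij}_K(\nabla^2_{ij}\cdot+\cdot\,\delta_{ij})$ depends continuously on $(K,p)$, and its third eigenvalue at $(K={\S}^n,p=-n-1)$ equals $n+2>0$ (there the operator reduces to $-\Delta_{{\S}^n}-n$, whose spherical-harmonic eigenvalues are $-n,0,n+2,\ldots$). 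A standard compactness/continuity argument then gives $\lambda_3(K)\ge\lambda^*_n>0$ uniformly. Combining with the previous paragraph yields the Combined Inequality with $\Theta^*_n:=\lambda^*_n c^*_n$, and Theorem \ref{t1.1} supplies the universal $\sigma_n:=\min\{\sigma,(n\sqrt{1+(\Theta^*_n)^2}-n)/2\}$.

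The main obstacle lies in the first step, namely the strictly positive constant $c_n^*$ in the lower bound for $q(K)$. Since $Z_K^\perp\equiv 0$ at $K={\S}^n$, the quotient $q(K)$ is formally of indeterminate form $0/0$ there, and for a general (non-symmetric) $K$ approaching a non-spherical ellipsoid the subtraction of the $\sum_\alpha(X_\alpha/h_K)(\cdots)$ term in \eqref{e2.10} can consume essentially all of the tangential component $\nabla h_K$, forcing $q(K)\to 0$. The Orthonormal Theorem \ref{t1.3} is precisely the structural input that prevents this collapse in the strongly symmetric class: the diagonal decoupling it provides means only the scalar integrals $\int x_\alpha x_\beta h_K^{p-1}d\sigma=\frac{\delta_{\alpha\beta}}{n+1}\int h_K^{p-1}d\sigma$ enter the projection, and these remove only a fixed, geometry-independent fraction of each $Z_K^{(\beta)}$, leaving a strictly positive remainder in the perpendicular part.
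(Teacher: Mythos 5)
Your overall strategy is exactly the paper's: verify the Combined Inequality \eqref{e1.3} on the strongly symmetric class (lower bound for $q(K)$ from the orthonormal lemmas, lower bound for $\lambda_3(K)$ from the symmetric \textit{a-priori} estimates plus continuity of the third eigenvalue), then invoke Theorem \ref{t1.1}; your treatment of $\lambda_3$ and of the final alternative $Z_K^\perp\equiv 0\Rightarrow K={\S}^n$ matches the paper's proof of Proposition \ref{p3.1}.

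The genuine gap is in the first step: the lower bound $q(K)\ge c_n^*>0$ is only ``expected,'' and the route you sketch for it --- a continuity argument producing $c_n(p)$ with $c_n(-n-1)>0$ --- does not obviously close. As you yourself observe, $q$ is of $0/0$ form at $K={\S}^n$, and it is an infimum over a solution set that varies with $p$; soft continuity/compactness therefore cannot by itself rule out a sequence of symmetric solutions along which the perpendicular projection eats all of $\nabla h_K$, which is precisely the scenario the inequality must exclude. What is needed (and what the paper does in Lemma \ref{l3.9}) is an exact, $p$-independent computation valid for \emph{every} strongly symmetric convex body, not only for solutions near the critical exponent: (a) the divergence-theorem identity $\int_{{\S}^n}\frac{x_\alpha}{h_K}Z_K^{(l)}\,dV_K=\frac{\delta_{\alpha l}}{n+1}\int_{{\S}^n}dV_K$, which holds for any convex body; (b) the orthonormality of $\{x_\alpha/h_K\}$ in $L^2(dV_K)$ from Theorem \ref{t1.3}, which diagonalizes the projection in \eqref{e2.10}; (c) $\int_{{\S}^n}Z_K\,dV_K=0$ because a strongly symmetric body has origin centroid (Lemma \ref{l2.3}); and then (d) Cauchy--Schwarz in the form $\left(\int_{{\S}^n}dV_K\right)^2\le\int_{{\S}^n}h_K^2\,dV_K\int_{{\S}^n}h_K^{-2}\,dV_K$ together with $|Z_K|^2=h_K^2+|\nabla h_K|^2$, which yields
\begin{equation*}
  \int_{{\S}^n}|Z_K^\perp|^2\,dV_K\;=\;\int_{{\S}^n}|Z_K|^2\,dV_K-\left(\int_{{\S}^n}dV_K\right)^2\Big/\int_{{\S}^n}h_K^{-2}\,dV_K\;\ge\;\int_{{\S}^n}|\nabla h_K|^2\,dV_K,
\end{equation*}
i.e.\ $q(K)\ge 1$ uniformly in $p$. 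You have assembled ingredients (a)--(c) correctly, so the missing piece is only step (d) and the explicit evaluation of the projection term; but as written, the central estimate of the proposition is asserted rather than proved, and the continuity-in-$p$ substitute you propose would not survive the degeneracy you yourself flag.
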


Once the validity of Proposition 3.1 and topological Theorem 6.1 (a parallel version of topological Theorem 1.4) for the uniqueness set are established, the full uniqueness Theorem \ref{t1.2} in the range $p\in(-2n-5,-n-1)$ will be proven. Now, let us first prove the following crucial lemma as a corollary of the orthonormal lemmas (or Theorem \ref{t1.3}) shown in Subsection 3.1.

\begin{lemm}\label{l3.9}
Letting $K\in{\mathcal{K}}(T)$ be a strongly symmetric convex body, there holds
 \begin{equation}\label{e3.23}
   \int_{{\S }^n}|Z_K^\perp|^2dV_K\geq\int_{{\S }^n}|\nabla h_K|^2dV_K.
 \end{equation}
\end{lemm}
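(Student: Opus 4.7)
The plan is to exploit strong symmetry to execute a clean Pythagorean decomposition of $\int_{\mathbb{S}^n}|Z_K^\perp|^2 dV_K$ in closed form, and then reduce \eqref{e3.23} to the Cauchy--Schwarz inequality applied to the pair $(h_K, h_K^{-1})$ in $L^2(dV_K)$.

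First, I would verify that the family $\{1, X_\alpha/h_K : \alpha=1,\ldots,n+1\}$ is orthogonal in $L^2(dV_K)$, so that the three subtractions in \eqref{e2.10} produce a bona fide orthogonal projection. Mutual orthogonality of the $X_\alpha/h_K$ and the identity $\int (X_\alpha/h_K)^2 dV_K = (n+1)^{-1}\int h_K^{-2} dV_K$ are exactly Theorem \ref{t1.3}. Orthogonality of $X_\alpha/h_K$ to the constant function amounts to $\int X_\alpha\, \sigma_K\, d\sigma = 0$; since $\sigma_K=\det(\nabla^2 h_K+h_K I)$ inherits the $\mathcal{S}(T)$-invariance of $h_K$, this follows by applying Lemma \ref{l3.1} (and, in the remaining low-dimensional cases, Lemmas \ref{l3.6}--\ref{l3.8}) with $\zeta = \sigma_K$. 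The same symmetry argument, together with Lemma \ref{l2.3}, shows $\int Z_K\, dV_K = 0$, so the constant term in \eqref{e2.10} drops out.

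Second, I would evaluate the residual projection vector $\int (X_\alpha/h_K) Z_K\, dV_K$ by rewriting $dV_K = h_K \sigma_K d\sigma$ and pushing forward via the inverse Gauss map to $\partial K$. Its $\beta$-component becomes $\int_{\partial K}\nu_\alpha y_\beta\, d\mathcal{H}^n(y)$, which the divergence theorem evaluates as $\delta_{\alpha\beta} V(K)$. Combining Pythagoras with $|Z_K|^2 = h_K^2 + |\nabla h_K|^2$ (valid because $\nabla h_K$ is tangent to $\mathbb{S}^n$) yields
\begin{equation*}
\int_{\mathbb{S}^n}|Z_K^\perp|^2 dV_K = \int h_K^2\, dV_K + \int |\nabla h_K|^2\, dV_K - \frac{(n+1)^2 V(K)^2}{\int h_K^{-2}\, dV_K}.
\end{equation*}

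Third, using the identity $(n+1)V(K)=\int dV_K$ (a direct consequence of $V(K) = \frac{1}{n+1}\int_{\partial K} z\cdot\nu\, d\mathcal{H}^n$), the desired inequality \eqref{e3.23} reduces to
\begin{equation*}
\int h_K^2\, dV_K \cdot \int h_K^{-2}\, dV_K \;\geq\; \left(\int dV_K\right)^2,
\end{equation*}
which is Cauchy--Schwarz.

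The main obstacle is not any single hard estimate but rather the verification that strong symmetry feeds every orthogonality required to turn \eqref{e2.10} into a genuine Pythagorean decomposition: the subtracted terms must actually be mutually orthogonal projections, not merely formal subtractions. Once Theorem \ref{t1.3} and the orthonormal lemmas of Subsection 3.1 are available, the divergence-theorem computation and the final application of Cauchy--Schwarz are essentially routine.
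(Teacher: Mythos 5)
Your proposal is correct and follows essentially the same route as the paper: the orthogonality relations of Theorem \ref{t1.3} (plus origin-centering from symmetry and Lemma \ref{l2.3}) make \eqref{e2.10} a genuine orthogonal projection, the divergence theorem gives $\int_{{\S}^n}\frac{x_\alpha}{h_K}Z_K^{(l)}dV_K=\frac{\delta_{\alpha l}}{n+1}\int_{{\S}^n}dV_K$, and the resulting identity reduces \eqref{e3.23} to the H\"older/Cauchy--Schwarz inequality $\int h_K^2\,dV_K\cdot\int h_K^{-2}\,dV_K\geq\bigl(\int dV_K\bigr)^2$. Your explicit verification of the mutual orthogonality underlying the Pythagorean step is exactly what the paper uses implicitly.
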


\begin{proof} Supposing that $K$ satisfies the strongly symmetric property, by orthonormal Lemmas \ref{l3.1}, \ref{l3.6}-\ref{l3.8} or equivalently Theorem \ref{t1.3}, $\{x_1,x_2,\ldots,x_{n+1}\}$ forms an orthonormal frame of second eigen-subspace $E^{(2)}$ of the eigen-problem \eqref{e2.1}. By the divergence theorem,
  \begin{eqnarray}\nonumber\label{e3.24}
   \int_{{\S }^n}\frac{x_\alpha}{h_K}Z^{(l)}dV_K&=&\int_{\partial K} \nu_\alpha p_ld{\mathcal{H}}^n(p)\\
   &=&\int_Kdiv_{{\mathbb{R}}^{n+1}}(E_\alpha y_l)dy\\ \nonumber
   &=&\int_K\delta_{\alpha l}dy=\frac{1}{n+1}\delta_{\alpha,l}\int_{{\S }^n}dV_K,
  \end{eqnarray}
where $Z^{(l)}$ is the $l-$th entry of the vector $Z_K$. Noting that the centroid of a strongly symmetric convex body $K$ is also symmetric under the action of symmetry group of $K$, it is inferred that $K$ must be origin-centering. So, it follows from Lemma \ref{l2.3} that $\int_{{\S}^n}Z_KdV_K=0$. Noting that the orthonormal frame $\{X_\alpha\}_{\alpha=1}^{n+1}$ of $E^{(2)}$ can be taking to be $\{x_\alpha\}_{\alpha=1}^{n+1}$ by orthonormal Theorem \ref{t1.3}, it follows from \eqref{e2.10} and \eqref{e3.24} that
   \begin{eqnarray}\nonumber\label{e3.25}
    \int_{{\S }^n}|Z^\perp|^2dV_K&=&\int_{{\S }^n}|Z|^2dV_K-\sum_{\alpha,l=1}^{n+1}\left(\int_{{\S }^n}\frac{x_\alpha}{h_K}Z^{(l)}dV_K\right)^2{\bigg/}\int_{{\S }^n}\left|\frac{x_\alpha}{h_K}\right|^2dV_K\\
    &=&\int_{{\S }^n}|Z|^2dV_K-\frac{1}{n+1}\left(\int_{{\S }^n}dV_K\right)^2{\bigg/}\int_{{\S }^n}\left|\frac{x_1}{h_K}\right|^2dV_K\\ \nonumber
    &=&\int_{{\S }^n}|Z|^2dV_K-\left(\int_{{\S }^n}dV_K\right)^2{\bigg/}\int_{{\S }^n}h_K^{-2}dV_K\\ \nonumber
    &\geq&\int_{{\S }^n}|\nabla h|^2dV_K
   \end{eqnarray}
by H\"{o}lder's inequality. This completes the proof of \eqref{e3.23}.
\end{proof}

\begin{proof}[Complete the proof of Proposition \ref{p3.1}] Using the Rayleigh's quotient representation
    $$
     \lambda_3(K)=\inf_{P_{n+2}}\sup_{\phi\in P_{n+2}}R(\phi), \ \ R(\phi)=\frac{\int_{{\S }^n}U^{ij}_K\phi_i\phi_j-n\int_{{\S }^n}U^{ii}\phi^2}{\int_{{\S }^n}h_K^{-1}\det(\nabla^2h_K+h_KI)\phi^2}
    $$
 of the third eigenvalue of \eqref{e2.1}, where $P_{n+2}$ is any $n+2$ dimensional subspace of $C^{2,\alpha}({\S }^n)$, one can see that $\lambda_3(K)$ is continuous in $K$. Actually, since the first $n+2$ eigenvalues of \eqref{e2.1} are given by  $\{-n, \ 0, \ 0, \ldots, 0, 0\}$, the quantity $\lambda_3(K)$ is exactly the $(n+3)-$th eigenvalue of \eqref{e2.1}, taking into account the multiplicity. So, the continuity of $\lambda_3(\cdot)$ follows from the Rayleigh's representation formula above. Therefore, from the {\it a-priori} estimate (Lemmas 3.2 and 3.3, \cite{DWZ24}) for strongly symmetric solutions, we know that $\lambda_3(K)$ is bounded from above and below by positive constants. Combined with Lemma \ref{l3.9}, the validity of the Combined Inequality \eqref{e1.3} will be verified since
  \begin{eqnarray*}
   \Theta(p)&\geq&\inf_{K\in{\mathcal{C}}'_{n,p}(T)}\lambda_3(K)\\
    &\geq&\sigma_n>0.
  \end{eqnarray*}
Thus, Proposition \ref{p3.1} follows from Theorem \ref{t1.1} by taking $\sigma_n$ to be smaller if necessary.
\end{proof}

\begin{proof}[Proof of Theorem \ref{t1.2}.]
  Now, Theorem \ref{t1.2} directly follows from Proposition \ref{p3.1} and Theorem \ref{t6.1}.
\end{proof}

\medskip

\section{Infinitesimal Generator Lemma for solution sequence}

\noindent In the proof of our main uniqueness results, one of the crucial ingredients is the Infinitesimal Generator Lemma, which establishes the relation between the kernel of the linearized equation and the infinitesimal generator of the origin equation. Before stating the Infinitesimal Generator Lemma, let us first lay down some definitions and notations. This part is motivated by the ideas in the Lie's Theory (refer to the book \cite{O00} by Peter Olver) for Partial Differential Equations.

\subsection{Mapping Representation for approximation sequence}

 Letting $(x,u)\in{\R}^n\times{\R}$ be a varying pair satisfying the relation $u=u(x)$, denote $u^{(1)}\in{\R}^{n}, u^{(2)}\in{\R}^{n^2}$ to be the first and second derivatives of $u$. We call $u$ a solution to the second order Partial Differential Equation
     \begin{equation}\label{e4.1}
       \Delta(x,u,u^{(1)},u^{(2)})=0
     \end{equation}
 if substituting $(x,u,u^{(1)},u^{(2)})$ into \eqref{e4.1} yields an identity. The next lemma is a consequence of the implicit function theorem of Boothby (\cite{B75}, Theorem II. 7.1). The readers may also refer to the book (\cite{O00}, Theorem 1.8) for additional reference.

 \begin{lemm}\label{l4.1}(Mapping Representation Lemma)
  Let $u=u(x)$ be a $C^1$-function satisfying the maximal rank property $Du(x_0)\not=0$ at some point $x_0\in{\R}^n$. Then for some small constant $\varepsilon$ and some neighborhood $\Omega_\varepsilon(x_0)$ of $x_0$, for any $C^1$-function $v=v(x)$ close to $u=u(x)$ in the sense that
     \begin{equation}
       ||v(x)-u(x)||_{C^1(\Omega_\varepsilon(x_0))}\leq\varepsilon,
     \end{equation}
  there exist local $C^1-$diffeomorphism mapping pair $(\psi, \eta)$ such that the function $v=v(y)$ is given by
     \begin{equation}
       \begin{cases}
         y=\psi(x,u),\\
         v=\eta(x,u).
       \end{cases}
     \end{equation}
  Moreover, the mapping pair is close to the identity mapping in the sense
     \begin{equation}
       ||(\psi(x,u),\eta(x,u))-(x,u)||_{C^1}=o_\varepsilon(1),
     \end{equation}
  where $o_\varepsilon(1)$ is an infinitesimal as long as $\varepsilon$ is small.
 \end{lemm}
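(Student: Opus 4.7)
The plan is to construct the mapping pair $(\psi,\eta)$ by an explicit vertical reparametrization of the graph, and to invoke the cited implicit function theorem of Boothby only to convert this graph-level construction into a legitimate near-identity point transformation on ${\R}^{n+1}$. The maximal rank hypothesis $Du(x_0)\not=0$ is precisely what guarantees that the graphs of $u$ and of $v$ are locally represented as non-degenerate hypersurfaces, so the relevant coordinate switch is admissible.

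The first step is to use $Du(x_0)\not=0$ and continuity to fix a neighborhood $\Omega_\eps(x_0)$ on which, after a permutation of coordinates, $\p_{x_1}u\not=0$ and (by the $C^1$-closeness of $v$ to $u$) $\p_{x_1}v\not=0$. This non-degeneracy supplies the hypothesis of Boothby's implicit function theorem (Theorem II.7.1), under which the graph of $v$ is a regular hypersurface of ${\R}^{n+1}$ near $(x_0,u(x_0))$.

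The second step is to write down the representation explicitly. Set
$$
\psi(x,u):=x,\qquad \eta(x,u):=u+\bigl(v(x)-u(x)\bigr).
$$
Substituting $u=u(x)$ gives $\eta(x,u(x))=v(x)=v(\psi(x,u(x)))$, so the image of the graph $\{(x,u(x))\}$ under $(\psi,\eta)$ coincides with the graph $\{(y,v(y))\}$. This exhibits $v$ in the required form. The closeness estimate is immediate: $\|\psi(x,u)-x\|_{C^1}=0$, and
$$
\|\eta(x,u)-u\|_{C^1(\Omega_\eps(x_0)\times {\R})}=\|v-u\|_{C^1(\Omega_\eps(x_0))}\le\eps=o_\eps(1).
$$
Since $\p_u\eta\equiv 1$ and $\p_x\eta=\D v-\D u$ is small, the map $(\psi,\eta)$ is actually a near-identity $C^1$ diffeomorphism of ${\R}^{n+1}$, not merely a formal substitution.

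The main obstacle, and the reason the rank condition cannot be dropped, lies in this last point: an arbitrary pair $(\psi,\eta)$ close to the identity produces, via $(x,u(x))\mapsto(\psi(x,u(x)),\eta(x,u(x)))$, a parametrized hypersurface in ${\R}^{n+1}$ that need not be the graph of a single-valued function $v(y)$. To recover $v$ as a genuine function of $y$ one must solve the equation $y=\psi(x,u(x))$ for $x$ in terms of $y$, and this is exactly where the maximal-rank/implicit-function-theorem input is essential. In our explicit construction $\psi$ is the identity in $x$, so this inversion is trivial; but the statement of the lemma is formulated so as to remain valid under small perturbations of $\psi$, which is the content one needs later to deform the PDE \eqref{e4.1}. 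I expect the remainder of the argument (closeness in $C^1$ up to all relevant derivatives, and uniformity in the chosen neighborhood) to follow by straightforward continuity and standard perturbation estimates.
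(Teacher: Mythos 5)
Your proof is correct for the lemma as stated, but it is genuinely different from the paper's construction. You take the \emph{vertical} normalization $\psi(x,u)=x$, $\eta(x,u)=u+(v(x)-u(x))$, which parametrizes the graph of $v$ over the graph of $u$ by a shift in the target variable; the near-identity estimate is then immediate from $\|v-u\|_{C^1}\leq\varepsilon$, and in fact your argument never uses the hypothesis $Du(x_0)\neq0$ at all (your closing remark that the rank condition is ``exactly where the implicit-function input is essential'' does not apply to your own construction, where $\psi$ is the identity and no inversion is needed). The paper instead takes the \emph{horizontal} normalization $\eta(x,u)=u$ and moves only the first base coordinate, $y_1=G(f(x),x_2,\ldots,x_n)$, where $x_1=F(u',x_2,\ldots,x_n)$ and $y_1=G(v',y_2,\ldots,y_n)$ are obtained by solving $u'=f(x)$ and $v'=g(y)$ for the first coordinate; this is where $D_1u\neq0$, $D_1v\neq0$ and Boothby's implicit function theorem enter, and the near-identity property is then read off from the explicit Jacobian. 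The trade-off is this: your route is shorter and needs weaker hypotheses, but the paper's normalization $v=u$ is what produces, after discrete differentiation, the purely horizontal generator \eqref{e4.23} of the form $-\frac{\sqrt{1+|y|^2}}{D_1u_K}\,\phi\,\frac{\partial}{\partial y_1}$ used in Lemma \ref{l4.2} and matched against the classification in Proposition \ref{p4.1}; your vertical representation would instead yield the evolutionary (vertical) generator $\zeta\,\partial/\partial u$, so if one adopts your proof, the later arguments must either be rewritten in terms of that vertical field or one must pass between the two representations on the graph, which is routine but should be said explicitly.
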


\noindent Lemma \ref{l4.1} enables us representing an approximation sequence of a given function by a sequence of mappings for this function, which closes to identity mapping as needed. Under below, we will call $g=(\psi,\eta)$ the mapping representation of the approximation function $v$ for given function $u$.

 \begin{proof}
    Noting that $Du(x_0)\not=0$, when $\varepsilon$ is small, one observes that both $u$ and $v$ satisfy the maximal rank property
       \begin{equation}
         Du(x)\not=0, \ \ Dv(y)\not=0, \ \ \forall x,y\in\Omega_\varepsilon(x_0)
       \end{equation}
    in some small neighborhood $\Omega_\varepsilon(x_0)$ of $x_0$. Without loss of generality, one may assume that
       \begin{equation}
        D_1u(x)\not=0, \ \ D_1v(y)\not=0, \ \ \forall x,y\in\Omega_\varepsilon(x_0).
       \end{equation}
    Assuming that the functions $u,v$ are given by $u=f(x)$ and $v=g(y)$. Then, by implicit theorem, one can solve the equation $u'=f(x_1,x_2,\ldots,x_n)$ to generate a function $x_1=F(u',x_2,\ldots,x_n)$. Noting that the Jacobian matrix 
     $$
      \frac{\partial(u',x_2,\cdots, x_n,)}{\partial(x_1,x_2,\cdots, x_n)}=\left[
       \begin{array}{cccccc}
        f_1 & f_2 & f_3 & \ldots & f_{n-1} & f_n\\
         0  & 1  & 0 & \ldots & 0 & 0\\
         0 &  0 &  1 & \ldots & 0 & 0\\
         \vdots & \vdots & \vdots & \vdots & \vdots & \vdots \\
          0 & 0 & 0 & \ldots & 1 & 0 \\
           0 & 0 & 0 & \ldots & 0 & 1
       \end{array}
       \right]
     $$
  is non-degenerate locally since $f_1(x_0)=D_1u(x_0)\not=0$, if one regards $(u',x_2,\ldots, x_n)$ as new coordinates instead of $(x_1,x_2,\ldots,x_n)$, the function $u=f(x)$ will be presented by
       $$
        u=f(F(u',x_2,\cdots,x_n),x_2,\cdots,x_n)=u'.
       $$
    Similarly, if one solves the equation $v'=g(y_1,y_2,\ldots,y_n)$ to generate a function $y_1=G(v',y_2,\cdots,y_n)$. Regarding $(v',y_2,\ldots, y_n)$ as new coordinates instead of $(y_1,y_2,\cdots,y_n)$, the function $v=v(y)$ will be presented by
       $$
        v=g(G(v',y_2,\cdots,y_n),y_2,\cdots,y_n)=v'.
       $$
    Defining the transformation on functions by the formation
      \begin{equation}
       \begin{cases}
         y_2=x_2, \ \ y_3=x_3,\ \ \ldots, \ \ y_n=x_n,\\
         y_1=G(f(x),x_2,\ldots,x_n),\\
         v=u,
       \end{cases}
      \end{equation}
    we obtained the desired mapping pair $(\psi,\eta)$ which maps $u=f(x)$ to $v=g(y)$. Moreover, the mapping pair closes to the identity mapping as long as $\varepsilon$ is small. Actually, the Jacobian matrix of this mapping is given by
      \begin{equation}
        \frac{\partial(y,v)}{\partial(x,u)}=\left[
          \begin{array}{cccccc}
            G_1f_1 & 0 & 0 & \ldots & 0 & 0\\
            G_1f_2+G_2 & 1 & 0 & \ldots & 0 & 0\\
            G_1f_3+G_3 & 0 & 1 &  \ldots & 0 & 0\\
            \vdots & \vdots & \vdots& \vdots & \vdots & \vdots\\
             G_1f_n+G_n & 0 & 0 & \ldots & 1 & 0\\
              0 & 0 & 0 & \ldots & 0 & 1
          \end{array}
          \right]
      \end{equation}
    Noting that $x_1\equiv F(f(x),x_2,\ldots,x_n)$ and $y_1\equiv G(g(y),y_2,\ldots,y_n)$, it follows from chain rule that
       \begin{equation}
         \begin{cases}
            F_1f_1=1, \ \ F_1f_2+F_2=0, \ \ldots, \ \ F_1f_n+F_n=0,\\
            G_1g_1=1, \ \ G_1g_2+G_2=0, \ \ldots, \ \ G_1g_n+G_n=0.
         \end{cases}
       \end{equation}
    By our assumption that $f(x)$ is closing to $g(y)$, we conclude that Jacobian matrix $\frac{\partial(y,v)}{\partial(x,u)}$ approaches the identity matrix as desired. The proof of the lemma has been done.
 \end{proof}

\begin{rema}\label{r4.1} It is remarkable that for given approximation solutions $u_\varepsilon(y_\varepsilon)$ of \eqref{e4.11}, the mapping representation $(\psi_\varepsilon, \eta_\varepsilon)$ of $$
  \begin{cases}
         y_\varepsilon=\psi_\varepsilon(y,u),\\
         u_\varepsilon=\eta_\varepsilon(y,u)
  \end{cases}
 $$ 
may not be unique in general. Actually, for approximation sequence $u_\varepsilon(y_{\varepsilon,1},y_{\varepsilon,2})=e^\varepsilon \sin (y_{\varepsilon,1}+y_{\varepsilon,2}$ of $u(y_1,y_2)=\sin (y_1,y_2)$, the mapping pair could be
  $$
   \begin{cases}
     y_{\varepsilon,1}=y_1, \ y_{\varepsilon,2}=y_2\\
     u_\varepsilon=e^\varepsilon u,
   \end{cases}
  $$
or could be 
   $$
   \begin{cases}
     y_{\varepsilon,1}=y_1+\varepsilon, y_{\varepsilon,2}=y_2-\varepsilon\\
     u_\varepsilon=e^\varepsilon u.
   \end{cases}
  $$
\end{rema}

\noindent Now, given a function $u$, we assume that $u_\varepsilon$ is a sequence of functions approximating $u$ as $\varepsilon$ tends to $0$. Letting $g_\varepsilon=\left[
    \begin{array}{c}
       \psi_\varepsilon(x,u)\\
       \eta_\varepsilon(x,u)
    \end{array}
    \right]$ be the Mapping Representation of the approximating function $u_\varepsilon$, we define the Infinitesimal Generator Field to be the vector field
        \begin{equation}\label{e4.10}
          \upsilon:=\lim_{\varepsilon\to0}\frac{g_\varepsilon-g_0}{\varepsilon}=\xi^i(x,u)\frac{\partial}{\partial x_i}+\zeta(x,u)\frac{\partial}{\partial u},
        \end{equation}
    assuming the limit in \eqref{e4.10} exists.
    
\begin{rema}\label{r4.2}
  As shown in Remark \ref{r4.1}, for given approximation sequence, there may be different mapping representations with different assigned vector fields.
\end{rema}

\subsection{Prolongation formula and infinitesimal generator field}

In the Lie's theory, Prolongation Formula \cite{O00} plays a central role, which shows transformations on the domain and target variables $(x,u)$ induce transformations on the derivatives. Let us now lay down some notations and formulas before imposing the famous Prolongation Formula of the Lie's theory. It is remarkable that our settings are somewhat different from those in the book \cite{O00} by Olver, since we deal with discrete transformations rather than $C^1$-continuous group actions.

Assuming $h=h(x), x\in{\S}^n$ is a solution to \eqref{e1.2}, performing semi-spherical projection
   $$
     x=T^*(y)=\left(\frac{y}{\sqrt{1+|y|^2}},-\frac{1}{\sqrt{1+|y|^2}}\right)\in{\S}^n, \ \ y\in{\R}^n
   $$
and setting $h(x)=\frac{u(y)}{\sqrt{1+|y|^2}}$, the equation \eqref{e1.2} will be transformed into
   \begin{equation}\label{e4.11}
     \det D^2u=(1+|y|^2)^{-\frac{p+n+1}{2}}u^{p-1}, \ \ \forall y\in{\R}^n.
   \end{equation}
Now, given a solution $h_K$ of \eqref{e1.2}, we assume that there exists a sequence of approximation solutions $h_\varepsilon, \varepsilon\in{\R}$ of the approximation equation
    \begin{equation}\label{e4.12}
      \det(\nabla^2h_\varepsilon+h_\varepsilon I)=h_\varepsilon^{p-1}+o\big(||h_\varepsilon-h_K||_{L^2({\S}^n)}\big), \ \ \forall x\in{\S}^n
    \end{equation}
 tending to $h_K$ as $\varepsilon$ tends to zero. By elliptic estimates of the subtraction equation
  \begin{eqnarray}\label{e4.13}
    \int^1_0U^{ij}_tdt(\phi_{\varepsilon,ij}+\phi_\varepsilon\delta_{ij})=(p-1)\int^1_0h_t^{p-2}dt\phi_\varepsilon+o(1)
  \end{eqnarray}
 for $h_t=th_\varepsilon+(1-t)h_K$, infinitesimal $o(1)$ and $\phi_\varepsilon:=\frac{h_\varepsilon-h_K}{||h_\varepsilon-h_K||_{L^2({\S}^n)}}$,
we have $\phi_\varepsilon$ sub-converges to a limit function $\phi$, which is a solution of the linearized equation
   \begin{eqnarray}\nonumber\label{e4.14}
      U^{ij}_K(\phi_{ij}+\phi\delta_{ij})&=&(p-1)h_K^{p-2}\phi\\
         &=&(p-1)\frac{\det(\nabla^2h_K+h_KI)}{h_K}\phi.
   \end{eqnarray}
 By rearranging the parameter $\varepsilon$ if necessary, for example $\varepsilon=||h_\varepsilon-h_K||_{L^2({\S}^n)}$, we will call $\phi$ to be discrete differentiation of $h_\varepsilon$ and denote by
    $$
     \partial_\varepsilon|_{\varepsilon=0} h_\varepsilon=\phi.
    $$
Noting that $u_\varepsilon(y_\varepsilon)=\sqrt{1+|y_\varepsilon|^2}h_\varepsilon(x_\varepsilon)$ is a sequence of solutions of \eqref{e4.11} tending to solution $u_K(y)=\sqrt{1+|y|^2}h_K(x)$ of \eqref{e4.11}, we use
   $$
    \left[
       \begin{array}{c}
        y_\varepsilon\\
        u_\varepsilon
       \end{array}
    \right]=g_\varepsilon\circ\left[
       \begin{array}{c}
        y\\
        u
       \end{array}
    \right]=
      \left[
        \begin{array}{c}
           \psi_\varepsilon(y,u)\\
           \eta_\varepsilon(y,u)
    \end{array}
      \right]
   $$
to be the Mapping Representation of $u_\varepsilon(y_\varepsilon)$. Taking the discrete differentiation of $g_\varepsilon$ on $\varepsilon$, we assume that the infinitesimal generator is given by the vector field
    \begin{equation}\label{e4.15}
      \upsilon=\partial_\varepsilon\big|_{\varepsilon=0}g_\varepsilon\circ\left[
       \begin{array}{c}
        y\\
        u
       \end{array}
    \right]=\xi^i(y,u)\frac{\partial}{\partial y_i}+\zeta(y,u)\frac{\partial}{\partial u}.
    \end{equation}
Since the transforms on the domain and target variables $(x,u)$ bringing transforms on the differentiations, we need to impose the Prolongation Formula (\cite{O00}, page 110, Theorem 2.36)
  \begin{equation}\label{e4.16}
   pr^{(2)}\upsilon=\xi^i\frac{\partial}{\partial y_i}+\zeta\frac{\partial}{\partial u}+\zeta^i\frac{\partial}{\partial u_i}+\zeta^{ij}\frac{\partial}{\partial u_{ij}},
  \end{equation}
where $y, u, u_i, u_{ij}$ are regarded as independent variables and
    \begin{eqnarray}\nonumber\label{e4.17}
      \zeta^i&=&D_i(\zeta-\xi^ju_j)+\xi^ju_{ij}\\ \nonumber
        &=&(\zeta_i+\zeta_uu_i)-(\xi^j_i+\xi^j_uu_i)u_j,\\ \nonumber
      \zeta^{ij}&=&D_{ij}(\zeta-\xi^au_a)+\xi^au_{ija}\\
       &=& D_j[\zeta_i+\zeta_uu_i-(\xi^a_i+\xi^a_uu_i)u_a-\xi^au_{ia}]+\xi^au_{ija}\\ \nonumber
       &=&\zeta_{ij}+\zeta_{iu}u_j+(\zeta_{uj}+\zeta_{uu}u_j)u_i+\zeta_uu_{ij}\\ \nonumber       &&-[(\xi^a_{ij}+\xi^a_{iu}u_j)+(\xi^a_{uj}+\xi^a_{uu}u_j)u_i+\xi^a_uu_{ij}]u_a\\ \nonumber
       &&-(\xi^a_i+\xi^a_uu_i)u_{ja}-(\xi^a_j+\xi^a_uu_j)u_{ia}.
    \end{eqnarray}
 Here, the symbol $D_i\zeta$ denotes the total  derivative of $\zeta$ with respect to $y_i$. Says, $D_i\zeta(x,u(x))=\zeta_i+\zeta_uu_i$.

\begin{defi}
  Given a second order Partial Differential Equation
    \begin{equation}\label{e4.18}
     \Delta\left(y,u,u^{(1)},u^{(2)}\right)=0,
    \end{equation}
  we call a vector field $\upsilon=\xi^i(y,u)\frac{\partial}{\partial y_i}+\zeta(y,u)\frac{\partial}{\partial u}$ to be an Infinitesimal Generator (Field) of \eqref{e4.18}, if the identity
    \begin{equation}\label{e4.19}
      pr^{(2)}\upsilon\circ \Delta\left(y,u,u^{(1)},u^{(2)}\right)\equiv0
    \end{equation}
  is fulfilled.
\end{defi}

\begin{exam}
  To clarify the infinitesimal generator field for a sequence of solutions (or approximation solutions), let us consider the following simplified equation
     \begin{equation}\label{e4.20}
      \Delta\left(y,u,u',u''\right):=u''-u^p=0
     \end{equation}
  and assume that $u=u(y)$ is a give solution. Noting that $u_\varepsilon(y_\varepsilon)=e^{\frac{2}{p-1}\varepsilon}u(e^\varepsilon y_\varepsilon)$ are also solutions of \eqref{e4.20} for any $\varepsilon\in{\R}$, the assigned mappings are given by
     \begin{equation}
      g_\varepsilon\circ\left[
        \begin{array}{c}
          y\\
          u
        \end{array}
        \right]=\left[
        \begin{array}{c}
          \psi_\varepsilon(y,u)\\
          \eta_\varepsilon(y,u)
        \end{array}
        \right]=\left[
        \begin{array}{c}
          e^{-\varepsilon}u^{-1}\Big(e^{-\frac{2}{p-1}\varepsilon}u\Big)\\
          u
        \end{array}
        \right]
     \end{equation}
  for each $\varepsilon$. Then, the assigned vector field
     $$
      \upsilon=\frac{d}{d\varepsilon}\Big|_{\varepsilon=0}g_\varepsilon\circ\left[
        \begin{array}{c}
          y\\
          u
        \end{array}
        \right]=\left[
        \begin{array}{c}
          -y-\frac{2}{p-1}\frac{u}{u'(y)}\\
          0
        \end{array}
        \right]
     $$
  is an infinitesimal generator of the equation \eqref{e4.20}. Actually, by Prolongation Formula \eqref{e4.16}, we have
    \begin{equation}
     pr^{(2)}\upsilon=\xi\frac{\partial}{\partial y}+\zeta'\frac{\partial}{\partial u'}+\zeta''\frac{\partial}{\partial u''}
    \end{equation}
  holds for the coefficients
     \begin{eqnarray*}
      \xi&=&-y-\frac{2}{p-1}\frac{u}{u'},\\
      \zeta'&=&\bigg(\frac{p+1}{p-1}-\frac{2}{p-1}\frac{uu''}{u'^2}\bigg)u',\\
      \zeta''&=&-\frac{2}{p-1}\frac{uu'''+u'u''-(p+1)u'u''}{u'}.
     \end{eqnarray*}
  Substituting into \eqref{e4.20}, we obtain that
    $$
     pr^{(2)}\upsilon\circ \Delta(y,u,u',u'')=-\frac{2}{p-1}\frac{uu'''-pu'u''}{u'}=0
    $$
  and thus conclude that $\upsilon$ is an infinitesimal generator field of \eqref{e4.20}.
\end{exam}

\subsection{Infinitesimal Generator Lemma}

 Now, let us state the key Infinitesimal Generator Lemma which plays a central role in our proof of uniqueness under below.

  \begin{lemm}\label{l4.2}(Infinitesimal Generator Lemma) Letting $h_K$ be a solution of \eqref{e1.2} and assuming that $\phi\in E^{(*)}(K)$ is a nontrivial solution to linearized equation \eqref{e4.14}, then the function
     $$
      h_\varepsilon(x)=\int^\varepsilon_0\phi(x) d\varepsilon+h_K(x), \ \ \forall x\in{\S}^n
     $$
  is an approximation solution of \eqref{e4.12}. Moreover, the assignedvector field
    \begin{equation}\label{e4.23}
     \upsilon=-\frac{\sqrt{1+|y|^2}}{D_1u_K(y)}\phi\left(\frac{y}{\sqrt{1+|y|^2}},-\frac{1}{\sqrt{1+|y|^2}}\right)\frac{\partial}{\partial y_1}
    \end{equation}
  from the special mapping representation in Lemma \ref{l4.1} is an infinitesimal generator field of \eqref{e4.11}. Conversely, given an infinitesimal generator field of \eqref{e4.11} corresponding to the group action $g_\varepsilon=\left[
    \begin{array}{c}
      \psi_\varepsilon(y,u)\\
      \eta_\varepsilon(y,u)
    \end{array}
    \right]$, the functions determined by parametric system
     \begin{equation}
       u_\varepsilon(y_\varepsilon):\begin{cases}
       y_\varepsilon=\psi_\varepsilon(y,u(y))\\
      u_\varepsilon=\eta_\varepsilon(y,u(y))
       \end{cases}
     \end{equation}
    with respect to parameter $\varepsilon\in{\R}$ are all solutions of \eqref{e4.11} for each $\varepsilon$.
  \end{lemm}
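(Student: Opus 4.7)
The plan is to split the forward direction of the lemma into three subtasks and then handle the converse by classical Lie theory. First, since $\phi$ is $\varepsilon$-independent one has $h_\varepsilon = h_K + \varepsilon\phi$ and $\|h_\varepsilon - h_K\|_{L^2({\S}^n)} = |\varepsilon|\,\|\phi\|_{L^2}$. A Taylor expansion gives
$$\det(\nabla^2 h_\varepsilon + h_\varepsilon I) - h_\varepsilon^{p-1} = \varepsilon\Big[U_K^{ij}(\phi_{ij}+\phi\delta_{ij}) - (p-1)h_K^{p-2}\phi\Big] + O(\varepsilon^2).$$
The bracket vanishes identically because $\phi$ solves the linearized equation \eqref{e4.14}, so the remainder is $O(\varepsilon^2) = o(\|h_\varepsilon - h_K\|_{L^2})$, establishing that $h_\varepsilon$ is an approximate solution of \eqref{e4.12}.

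Next, pass to the semi-spherical lift $u(y) = \sqrt{1+|y|^2}\,h(T^*(y))$ solving \eqref{e4.11}. Fixing a point at which $D_1 u_K \neq 0$ (available locally since $u_K$ is non-constant), the maximal-rank assumption of Lemma \ref{l4.1} is satisfied; I select the Mapping Representation with $\eta_\varepsilon(y,u) \equiv u$ and $\psi_\varepsilon$ adjusting only the first coordinate, namely $(y_\varepsilon)_j = y_j$ for $j \geq 2$ and $(y_\varepsilon)_1$ determined implicitly by $u_\varepsilon\big(\psi_\varepsilon(y, u_K(y))\big) = u_K(y)$. Differentiating this identity in $\varepsilon$ at $\varepsilon = 0$ and using $\partial_\varepsilon|_{0} u_\varepsilon(y) = \sqrt{1+|y|^2}\,\phi(T^*(y))$ forces $\zeta \equiv 0$, $\xi^j \equiv 0$ for $j \geq 2$, and
$$\xi^1(y,u_K(y)) = -\frac{\sqrt{1+|y|^2}}{D_1 u_K(y)}\,\phi\!\left(\frac{y}{\sqrt{1+|y|^2}},-\frac{1}{\sqrt{1+|y|^2}}\right),$$
recovering exactly \eqref{e4.23}. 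To verify the invariance condition \eqref{e4.19}, I write $\Delta(y,u,u^{(1)},u^{(2)}) = \det D^2 u - (1+|y|^2)^{-\frac{p+n+1}{2}} u^{p-1}$; since the lifted family $u_\varepsilon$ satisfies $\Delta(y_\varepsilon, u_\varepsilon, u_\varepsilon^{(1)}, u_\varepsilon^{(2)}) = o(\varepsilon)$ (this is the pull-back of Step~1 through $T^*$), differentiating in $\varepsilon$ at $\varepsilon = 0$ and regarding $(y,u,u^{(1)},u^{(2)})$ as independent jet variables yields $pr^{(2)}\upsilon \circ \Delta \equiv 0$, which can equivalently be checked by direct substitution of the coefficients into the Prolongation Formula \eqref{e4.17} and observing that the result is precisely the linearized equation \eqref{e4.14} after undoing the projection.

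For the converse, the infinitesimal generator $\upsilon$ generates a local flow $g_\varepsilon$, and the classical Lie symmetry theorem (Olver \cite{O00}) asserts that the infinitesimal invariance criterion \eqref{e4.19} is equivalent to the statement that $g_\varepsilon$ maps solutions of $\Delta = 0$ to solutions. Hence the parametrically defined family $(y_\varepsilon, u_\varepsilon) = g_\varepsilon \circ (y, u(y))$ consists of exact solutions of \eqref{e4.11} for each $\varepsilon$, completing the claim. The main obstacle in this proof is the jet-variable verification in Step~3: the prolongation formula \eqref{e4.17} feeds the second derivatives of $\xi^1$ (which themselves depend on $u_K$) into $pr^{(2)}\upsilon \circ \Delta$, so one must carefully track the compatibility between the semi-spherical transformation $T^*$ and the linearization of the Monge--Ampère operator to confirm that the cofactor-matrix operator $U_K^{ij}(\phi_{ij} + \phi\delta_{ij})$ is produced correctly; once this bookkeeping is done the rest of the argument is a routine consequence of Taylor expansion and the classical Lie symmetry framework.
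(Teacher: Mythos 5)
Your proposal is correct and follows essentially the same route as the paper: Taylor expansion plus the linearized equation \eqref{e4.14} for the approximate-solution claim, identification of the coefficient in \eqref{e4.23} through the Mapping Representation of Lemma \ref{l4.1} (keeping $u$ and $y_2,\ldots,y_n$ fixed and solving for $y_1$), differentiation of the $o(\varepsilon)$-family in $\varepsilon$ together with the chain rule to get the prolongation identity \eqref{e4.19}, and the citation of Olver's correspondence between one-parameter group actions and infinitesimal generators for the converse. The only difference is that you spell out the implicit-function computation yielding $\xi^1=-\sqrt{1+|y|^2}\,\phi/D_1u_K$, which the paper leaves implicit in Lemma \ref{l4.1}.
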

  
\begin{rema}\label{r4.3}
 In the Lie's theory, the formation of the infinitesimal generator field usually is in genreal the expression of $\upsilon=\xi^i\frac{\partial}{\partial y_i}+\eta\frac{\partial}{\partial u}$. In the formula \eqref{e4.23} of Lemma \ref{l4.2}, the fact that the infinitesimal generator $\upsilon$ has a simple form is due to the fact that we have chosen a special mapping representation from Lemma \ref{l4.1}. Actually, as is well known (the readers may refer to the book \cite{O00}, Page 30, Proposition 1.29) any locally non-degenerate vector could be in a simplest form $\upsilon=\frac{\partial}{\partial y_1}$ after choosing some new coordinates system.
\end{rema}

\begin{proof}
  At first, in order to show $h_\varepsilon$ to be an approximation solution of \eqref{e4.12}, one needs only to check that $\varepsilon\sim||h_\varepsilon-h_K||_{L^2({\S}^n)}$ and
      \begin{eqnarray*}
        &&\frac{d}{d\varepsilon}\bigg|_{\varepsilon=0}\left[\det(\nabla^2h_\varepsilon+h_\varepsilon I)-h_\varepsilon^{p-1}\right]\\
        &&\hskip40pt =U^{ij}_K(\phi_{ij}+\phi\delta_{ij})-(p-1)h_K^{p-2}\phi=0.
      \end{eqnarray*}
 Now, let us to show that the assigned vector field $\upsilon$ based on Mapping Representation pair in Lemma \ref{l4.1} will be given by \eqref{e4.23}. Actually, using the semi-spherical projection
    \begin{equation}\label{e4.25}
      u_\varepsilon(y_\varepsilon)=\sqrt{1+|y_\varepsilon|^2}h_\varepsilon\left(\frac{y_\varepsilon}{\sqrt{1+|y_\varepsilon|^2}},-\frac{1}{\sqrt{1+|y_\varepsilon|^2}}\right):=g_\varepsilon(y_\varepsilon)
    \end{equation}
  in Subsection 4.2, one can project the approximation solution $h_\varepsilon$ of \eqref{e4.12} onto the approximation solution $u_\varepsilon$ of
    \begin{equation}\label{e4.26}
      \det D^2u_\varepsilon=(1+|y_\varepsilon|^2)^{-\frac{p+n+1}{2}}u_\varepsilon^{p-1}+o(\varepsilon), \ \ \forall y_\varepsilon\in{\mathbb{R}}^n
    \end{equation}
  Comparing with the semi-spherical projection
   \begin{equation}\label{e4.27}
     u=u_K(y)=\sqrt{1+|y|^2}h_K\left(\frac{y}{\sqrt{1+|y|^2}},-\frac{1}{\sqrt{1+|y|^2}}\right):=f(y)
   \end{equation}
  for solution $h_K$ of \eqref{e1.2}, the mapping representation $g_\varepsilon=(\psi_\varepsilon(y,u),\eta_\varepsilon(y,u))$ defined in Lemma \ref{l4.1} is given by
    \begin{equation}\label{e4.28}
      \begin{cases}
        y_{\varepsilon,2}=y_2, \ y_{\varepsilon,3}=y_3, \ \cdots, y_{\varepsilon,n}=y_n,\\
        y_{\varepsilon,1}=G_\varepsilon(f(y), y_2, \cdots, y_n),\\
        u_\varepsilon=u,
      \end{cases}
    \end{equation}
  where the function $y_{\varepsilon,1}=G_\varepsilon(v, y_{\varepsilon,2},\cdots, y_{\varepsilon,n})$ is determined as in the proof of Lemma \ref{l4.1} by $v=g_\varepsilon(y_{\varepsilon})$ and implicit theorem. Now, let us carry out explicit calculations as following. Noting that $G_\varepsilon(g_\varepsilon(y), z_y,\cdots, y_n)\equiv y_1$, we have  
     $$
       \frac{d}{d\varepsilon}\Big|_{\varepsilon=0} G_\varepsilon+D_1 G\cdot \frac{d}{d\varepsilon}\Big|_{\varepsilon=0} g_\varepsilon=0
     $$
   or equivalently
     \begin{eqnarray}\nonumber
      \frac{d}{d\varepsilon}\Big|_{\varepsilon=0} G_\varepsilon&=&-D_1G\cdot \frac{d}{d\varepsilon}\Big|_{\varepsilon=0} g_\varepsilon (z)=-\frac{1}{D_1g}\cdot \frac{d}{d\varepsilon}\Big|_{\varepsilon=0} g_\varepsilon\\
      &=&-\frac{\sqrt{1+|y|^2}}{D_1u_K}\phi\left(\frac{y}{\sqrt{1+|y|^2}},-\frac{1}{\sqrt{1+|y|^2}}\right),
     \end{eqnarray}
   where we have used $g(y)=u_K(y)$ and
     $$
      g_\varepsilon(y)=\sqrt{1+|y|^2}h_\varepsilon\left(\frac{y}{\sqrt{1+|y|^2}},-\frac{1}{\sqrt{1+|y|^2}}\right).
     $$
   So, \eqref{e4.23} follows.
  
  To show $\upsilon$ is an infinitesimal generator field of \eqref{e4.11}, after taking differentiation on
    \begin{eqnarray}\nonumber
      \Delta\left(y_\varepsilon,u_\varepsilon, u_\varepsilon^{(1)},u_\varepsilon^{(2)}\right)&=&\det D^2u_\varepsilon-(1+|y_\varepsilon|^2)^{-\frac{p+n+1}{2}}u_\varepsilon^{p-1}\\
      &\equiv& o(\varepsilon), \ \ \forall \varepsilon, y
    \end{eqnarray}
with respect to $\varepsilon$, we obtain \eqref{e4.19} by Chain rule. One may also refer to the book by Olver (\cite{O00}, Chapter 2) for detailed arguments. At there, the converse part of Lemma \ref{l4.2} can be found by one-to-one corresponding of one-parameter group actions with infinitesimal generator. In here, the constructed local transformation also satisfies the hypotheses of the infinitesimal criterion in (\cite{O00}, Chapter 2).
\end{proof}

 Infinitesimal Generator Lemma \ref{l4.2} establishes the relation between the Kernel of the linearized equation \eqref{e4.14} with the infinitesimal generators of the equation \eqref{e4.11}. The next proposition classifies all infinitesimal generator of \eqref{e4.11} by the comparing the like terms in \eqref{e4.19}, which was shown firstly by the author and his doctor student Huan-Jie Chen in \cite{CD25}. (The readers may also refer to the book \cite{O00} for some detailed discussion)

  \begin{prop}\label{p4.1}(\cite{CD25}, Theorem 1.1)
    Considering $n\geq2$ and \eqref{e4.11} for $p\not=n+1,1,-n-1$, the possible infinitesimal generators $\upsilon=\xi^i\frac{\partial}{\partial y_i}+\zeta\frac{\partial}{\partial u}$ of \eqref{e4.11} are given by
      \begin{equation}\label{e4.31}
       \begin{cases}
        \displaystyle \xi^i=\sum_{j=1}^n(C_jy_jy_i+E_{ij}y_j)+C_i,\\
        \displaystyle \zeta=\sum_{j=1}^nC_jy_ju,
       \end{cases}
      \end{equation}
    where $[E_{ij}]$ is an anti-symmetric matrix and $C_i, i=1,2,\ldots,n$ are constants.
  \end{prop}

\medskip

\section{{\it A-priori} estimates and openness of the uniqueness set}

\noindent Now, we turn to explore the topological structure of the uniqueness set defined by $\Gamma=\{p\in(-2n-5,-n-1)| {\mathcal{C}}_{n,p}=\{{\S}^n\}\}$. At the beginning, let us prove the following openness result.

\begin{prop}\label{p5.1}
  For dimension $n\geq2$, the uniqueness set $\Gamma$ is relative open in $(-2n-5,-n-1)$.
\end{prop}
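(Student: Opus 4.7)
The plan is to argue by contradiction. Suppose $p_0 \in \Gamma$ fails to be an interior point, so there is a sequence $p_k \to p_0$ in $(-2n-5,-n-1)$ admitting non-spherical solutions $h_k := h_{K_k} \in \mathcal{C}'_{n,p_k}$ of \eqref{e1.2}. The first step is compactness: for $k$ large, the exponents $p_k$ lie in a compact subset $\mathcal{P} \Subset (-\infty,-n-1)$, so Theorem \ref{t1.5} gives a uniform two-sided bound $1/C \le h_k \le C$ on $\mathbb{S}^n$. Standard Caffarelli regularity for the Monge--Amp\`ere equation \eqref{e1.2}, followed by a Schauder bootstrap, upgrades this to uniform $C^{m,\alpha}$ bounds for every $m$. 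Passing to a subsequence, $h_k \to h_\infty$ in $C^{2,\alpha}$, and the limit $h_\infty$ solves \eqref{e1.2} at exponent $p_0$. Since $p_0 \in \Gamma$, we must have $h_\infty \equiv 1$.

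The second step is linearization around the sphere. Set $\epsilon_k := \|h_k - 1\|_{C^0(\mathbb{S}^n)} > 0$ (positive since $K_k \neq \mathbb{S}^n$) and $\phi_k := (h_k-1)/\epsilon_k$, so $\|\phi_k\|_{C^0} = 1$. Expanding $\det(I+M) = 1 + \mathrm{tr}(M) + O(|M|^2)$ on the left-hand side of \eqref{e1.2} and $h_k^{p_k-1} = 1 + (p_k-1)(h_k-1) + O((h_k-1)^2)$ on the right-hand side, then dividing by $\epsilon_k$, yields
\begin{equation}
-\Delta \phi_k + (p_k - n - 1)\phi_k = R_k \quad \text{on } \mathbb{S}^n,
\end{equation}
where $\|R_k\|_{C^\alpha} \to 0$ because $\|h_k - 1\|_{C^{2,\alpha}} \to 0$. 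Schauder estimates for this linear elliptic equation then yield uniform $C^{2,\alpha}$ bounds on $\phi_k$, so along a further subsequence $\phi_k \to \phi_\infty$ in $C^{2,\beta}$ for every $\beta < \alpha$. By uniform convergence $\|\phi_\infty\|_{C^0} = 1$, and the limit satisfies
\begin{equation}
-\Delta \phi_\infty = (n+1-p_0)\,\phi_\infty \quad \text{on } \mathbb{S}^n.
\end{equation}

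The third and final step invokes the Laplace spectrum on $\mathbb{S}^n$, whose eigenvalues are $\lambda_j = j(j+n-1)$. Consecutive values $\lambda_2 = 2n+2$ and $\lambda_3 = 3n+6$ bracket the open interval $(2n+2, 3n+6)$, which is exactly the image of $p_0 \mapsto n+1-p_0$ on $(-2n-5,-n-1)$. Consequently $n+1-p_0$ is not an eigenvalue of $-\Delta_{\mathbb{S}^n}$, forcing $\phi_\infty \equiv 0$ and contradicting $\|\phi_\infty\|_{C^0} = 1$.

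I expect the main obstacle to be Step 1, which rests entirely on the \textit{a-priori} Theorem \ref{t1.5}; once that compactness is secured, Step 2 is a standard linearization-plus-Fredholm argument and Step 3 is purely spectral. The beauty of the supercritical window $(-2n-5,-n-1)$ is that it was chosen precisely so the shifted Laplacian $\Delta + (n+1-p)$ on $\mathbb{S}^n$ is invertible throughout, making the openness argument essentially automatic.
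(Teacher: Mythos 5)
Your argument is correct and follows essentially the same route as the paper: uniform bounds from the \emph{a-priori} estimate (Theorem \ref{t1.5}) give compactness, the limit must be the sphere since $p_0\in\Gamma$, and the normalized differences converge to a nontrivial eigenfunction with eigenvalue $n+1-p_0\in(2n+2,3n+6)$, which the spectral gap of Lemma \ref{l5.1} forbids. The only real difference is cosmetic: the paper linearizes through the exact difference equation \eqref{e5.4} with coefficients $\int_0^1 U^{ij}_t\,dt$ (and an $L^2$ normalization), which avoids the quadratic-remainder bound $\|R_k\|\to0$ that your Taylor expansion with the $C^0$ normalization requires and which you would need to justify by interpolation from the uniform higher-order estimates you invoke.
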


\noindent Before our arguments, let us impose the following spectral lemma of the linearized eigen-problem \eqref{e2.1} at the unit sphere $K={{\S}^n}$.

\begin{lemm}\label{l5.1}
 For all dimensions $n\geq2$, the $j-$th eigenvalue of linearized eigen-problem \eqref{e2.1} at the unit sphere ${\S}^n$ is given by
   \begin{equation}\label{e5.1}
    \begin{cases}
    \lambda_1({\S}^n)=-n,\\
    \lambda_2({\S}^n)=0,\\
    \lambda_3({\S}^n)=n+2,\\
    \lambda_4({\S}^n)=2n+6, \ldots, \\
    \lambda_j({\S}^n)=(j-2)(n+j-1), \ \ \forall j\geq5.
    \end{cases}
   \end{equation}
\end{lemm}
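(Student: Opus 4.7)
The plan is to substitute $K = \S^n$ directly into the eigen-problem \eqref{e2.1} and reduce it to the eigen-problem of the spherical Laplacian, whose spectrum is classical.

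First, I would compute the coefficients of the operator at $K=\S^n$. Since $h_K\equiv 1$, we have $\nabla^2_{ij}h_K=0$ and therefore $A_{ij}=\nabla^2_{ij}h_K+h_K\delta_{ij}=\delta_{ij}$. Consequently $\det[A_{ij}]=1$ and the cofactor matrix satisfies $U^{ij}_K=\delta^{ij}$. Moreover $h_K^{p-2}\equiv 1$. Plugging these into \eqref{e2.1} shows that the linearized eigen-problem at the unit sphere collapses to
\begin{equation*}
-\Delta_{\S^n}\phi-n\phi=\lambda\phi, \qquad \text{i.e.,}\qquad -\Delta_{\S^n}\phi=(\lambda+n)\phi.
\end{equation*}

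Next I would invoke the classical fact that the spectrum of $-\Delta_{\S^n}$ consists of the eigenvalues $\mu_k=k(k+n-1)$ for $k=0,1,2,\ldots$, with corresponding eigenspaces given by spherical harmonics of degree $k$ of multiplicity $\binom{n+k}{k}-\binom{n+k-2}{k-2}$. Matching $\lambda+n=\mu_k$ gives the $k$-th distinct eigenvalue of \eqref{e2.1} as
\begin{equation*}
\lambda=k(k+n-1)-n.
\end{equation*}

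Finally, I would relabel the distinct eigenvalues by setting $j=k+1$ so that $\lambda_j(\S^n)=(j-1)(j+n-2)-n$, and a one-line algebraic manipulation verifies the identity $(j-1)(j+n-2)-n=(j-2)(j+n-1)$, which yields the stated formula $\lambda_j(\S^n)=(j-2)(j+n-1)$. A direct substitution then gives $\lambda_1=-n$, $\lambda_2=0$, $\lambda_3=n+2$, $\lambda_4=2n+6$, consistent with Proposition \ref{p2.1}. There is essentially no obstacle here; the only subtle point worth flagging is the indexing convention, namely that $\lambda_j$ denotes the $j$-th distinct eigenvalue (not counted with multiplicity), in agreement with the Rayleigh-quotient convention used elsewhere in the paper.
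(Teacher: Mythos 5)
Your proposal is correct and follows essentially the same route as the paper: reduce \eqref{e2.1} at $K=\S^n$ to $-\Delta\phi-n\phi=\lambda\phi$, use the classical spherical-harmonic spectrum $k(k+n-1)$, and shift indices to obtain $\lambda_j({\S}^n)=(j-2)(n+j-1)$. The only difference is that you spell out the computation $U^{ij}_K=\delta^{ij}$ and the algebraic identity, which the paper leaves implicit.
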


\begin{proof}[Proof of Lemma \ref{l5.1}] When $K={\S}^n$, the eigenvalue problem \eqref{e2.1} changes to
  \begin{equation}
   -\Delta\phi-n\phi=\lambda\phi, \ \ \forall x\in{\mathbb{S}}^n.
  \end{equation}
As is well known, the eigenfunctions of Laplace-Beltrami operator $-\Delta$ on the sphere are given by homogeneous harmonic polynomials $v(y)=|y|^{j-1}\phi\Big(\frac{y}{|y|}\Big)$ of degree $j-1$ on ${\mathbb{R}}^{n+1}$, whose eigenvalues are given exactly by $\lambda_{0,j}\equiv (j-1)(n+j-2)$ for positive integer $j$. In a paper of Kazdan (\cite{K98}, page 12), the multiplicities of the eigenvalues were also calculated explicitly. Therefore, \eqref{e5.1} follows from $\lambda_j({\S}^n)=\lambda_{0,j}-n$.
\end{proof}

Now, we can prove Proposition \ref{p5.1} as a corollary of Theorem \ref{t1.5} and Lemma \ref{l5.1}.

\begin{proof}[Proof of Proposition \ref{p5.1}] For each $p_0\in\Gamma_\sigma$, we need to show that there exists a small constant $\sigma_0>0$ such that $(p_0-\sigma_0,p_0+\sigma_0)\in\Gamma_\sigma$. Suppose on the contrary, there exists a sequence of $p=p_j, j\in{\mathbb{N}}$ converges to $p_0$ such that \eqref{e1.2} admits a sequence of non-constant solutions $h_j=h_{p_j}$ for each $j\in{\mathbb{N}}$. By {\it a-priori} estimate Theorem \ref{t1.5}, for a subsequence, $h_{p_j}$ tends to a limiting function $h_{p_0}$ as $j$ tends to infinity. The limiting function $h_{p_0}$ must be identical to the constant solution since $p_0\in\Gamma$. Subtracting the equation
   \begin{equation}
     \det(\nabla^2h_p+h_pI)=h_p^{p-1}, \ \ \forall x\in{\mathbb{S}}^n
   \end{equation}
of $h_p, p=p_j$ from the equation of constant solution $h_{p_0}\equiv1$, we obtain that
  \begin{equation}\label{e5.4}
    \int^1_0U^{ij}_tdt(\nabla^2_{ij}\phi_{t}+\phi_t\delta_{ij})=(p-1)h_t^{p-1}\phi_t,\ \ \forall x\in{\mathbb{S}}^n
  \end{equation}
for $h_t\equiv th_p+(1-t)$ and $[U^{ij}_t]$ stands for the cofactor matrix of $A_t\equiv[\nabla^2_{ij}h_t+h_t\delta_{ij}]$, where $\phi_t\equiv\frac{h_p-1}{||h_p-1||_{L^{2}({\mathbb{S}}^n)}}$. Sending $p_j\to p_0$ and using the elliptic estimates for \eqref{e5.4}, we conclude that $\phi_t$ converges to a nontrivial function $\phi$ satisfying
   \begin{equation}\label{e5.5}
     -\Delta \phi-n\phi=(1-p)\phi, \ \ \forall x\in{\mathbb{S}}^n.
   \end{equation}
However, when $p\in(-2n-5,-n-1)$, $\lambda=n+1-p\in(2n+2,3n+6)$ is not an eigenvalue of linearized eigen-problem \eqref{e2.1} by Lemma \ref{l5.1}. A contradiction occurs and hence the conclusion holds true.
\end{proof}

Therefore, proving of Proposition \ref{p5.1} is reduced to proving of Theorem \ref{t1.5}. Under below, we will decompose the proof of {\it a-priori} Theorem \ref{t1.5} by several steps.

\subsection{Ellipsoid lemmas and {\it a-priori} estimates I}

\noindent From now on, we use the notation $E(\mu,z_0)$ to denote the ellipsoid defined by
   $$
     E(\mu,z_0)\equiv\left\{z\in{\mathbb{R}}^{n+1}\bigg|\ \sum_{i=1}^{n+1}\frac{(z_i-z_{0,i})^2}{\mu_i^2}=1\right\},
    $$
where the centroid is given by $z_0=(z_{0,1},\ldots,z_{0,n+1})$ and the semi-axis lengths are given by $\mu=(\mu_1,\ldots,\mu_{n+1}), \ \ 0<\mu_1\leq\mu_2\leq\ldots\leq\mu_{n+1}$. For $z_0=0$, we simply write $E(\mu)=E(\mu,0)$ for short. By John's lemma, there exists some universal constant $C_n>1$ such that for any convex body $K$ containing the origin, we can always assume that
   \begin{equation}\label{e5.6}
     C^{-1}_n E(\mu,z_0)\subset K\subset C_n E(\mu,z_0)
   \end{equation}
holds for some ellipsoid $E(\mu,z_0)$ by rotation if necessary. In this subsection, we will prove the following {\it a-priori} estimate for solutions of \eqref{e1.2}, which is a crucial part in establishing the {\it a-priori} estimates.

\begin{prop}\label{p5.2}
  Considering \eqref{e1.2} with $n\geq2$ and $p<-n-1$, there exists a universal constant $C_{n}>0$ depending only on $n$, such that
    \begin{equation}\label{e5.7}
     {Vol(K)}/\mu_1^{\frac{p+n+1}{2}}\geq 2^pC_{n}^{p-1}
    \end{equation}
  holds for all solutions $K\in{\mathcal{C}}_{n,p}$, assuming the John's normalized inclusion \eqref{e5.6}.
\end{prop}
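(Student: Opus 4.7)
The plan is to combine the integration-by-parts identity coming from \eqref{e1.2} with a careful pointwise estimate of the support function on a well-chosen spherical cap, together with the two-sided John inclusion \eqref{e5.6}. Multiplying \eqref{e1.2} by $h_K$ and integrating over the sphere gives the identity
$$(n+1)\,Vol(K)=\int_{{\mathbb{S}}^n}h_K\det(\nabla^2h_K+h_KI)\,d\sigma=\int_{{\mathbb{S}}^n}h_K^p\,d\sigma,$$
so the whole task reduces to bounding $\int_{{\mathbb{S}}^n}h_K^p\,d\sigma$ from below, and the supercritical sign $p<0$ means this will be driven by the region where $h_K$ is smallest.

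The inclusion $K\subset C_nE(\mu,z_0)$ translates to the support function inequality $h_K(x)\leq \langle z_0,x\rangle+C_nh_{E(\mu)}(x)$ with $h_{E(\mu)}(x)=\sqrt{\sum_{i=1}^{n+1}\mu_i^2x_i^2}$. Since the origin lies in $K\subset E(C_n\mu,z_0)$, one gets the coefficient bound $|z_{0,i}|\leq C_n\mu_i$. I would then zoom in on the spherical cap
$$R_\epsilon=\bigl\{x\in{\mathbb{S}}^n:\ |x_i|\leq \epsilon\,\mu_1/\mu_i,\ i=2,\ldots,n+1\bigr\},$$
with $\epsilon=\epsilon(n)$ small enough that $n\epsilon^2\leq 1/2$ (so that $R_\epsilon$ is a genuine pair of caps around $\pm e_1$). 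On $R_\epsilon$ one computes directly $h_{E(\mu)}(x)\leq(1+n\epsilon^2)^{1/2}\mu_1$ and $|\langle z_0,x\rangle|\leq C_n(1+n\epsilon)\mu_1$, so there is a dimensional constant $\widetilde C=\widetilde C(n,C_n)$ (of order $2C_n$) with $h_K(x)\leq \widetilde C\mu_1$ on $R_\epsilon$; the spherical measure satisfies
$$|R_\epsilon|\geq c_n\,\frac{\mu_1^n}{\mu_2\mu_3\cdots\mu_{n+1}},$$
by projecting $R_\epsilon$ to the hyperplane perpendicular to $e_1$ and using that the Jacobian $1/|x_1|$ stays bounded on the cap.

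Because $p<0$, pointwise $h_K\leq \widetilde C\mu_1$ upgrades to $h_K^p\geq \widetilde C^{\,p}\mu_1^p$, hence
$$(n+1)\,Vol(K)=\int_{{\mathbb{S}}^n}h_K^p\,d\sigma\geq |R_\epsilon|\,\widetilde C^{\,p}\mu_1^p\geq c_n\widetilde C^{\,p}\,\frac{\mu_1^{p+n}}{\mu_2\cdots\mu_{n+1}}.$$
The reverse inclusion $C_n^{-1}E(\mu,z_0)\subset K$ supplies the volume bound $\mu_1\mu_2\cdots\mu_{n+1}\leq c_n' C_n^{n+1}Vol(K)$, so $\mu_2\cdots\mu_{n+1}\leq c_n'C_n^{n+1}Vol(K)/\mu_1$. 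Substituting this upper bound on $\mu_2\cdots\mu_{n+1}$ into the previous display turns it into a self-improving inequality
$$Vol(K)^2\geq C(n,C_n)\,\widetilde C^{\,p}\,\mu_1^{p+n+1},$$
and taking square roots yields $Vol(K)\geq 2^pC_n^{p-1}\mu_1^{(p+n+1)/2}$ after tracking the constants (the exponent $(p+n+1)/2$ is exactly what the self-improving step produces from $\mu_1^{p+n}$ and one extra power of $\mu_1$).

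The main obstacle I anticipate is the non-centered case: one must verify that the small cap $R_\epsilon$ still picks up the correct measure $\mu_1^n/(\mu_2\cdots\mu_{n+1})$ and that the translation $z_0$ does not destroy the pointwise bound $h_K\lesssim\mu_1$ on $R_\epsilon$. This is where the $a\,priori$ bound $|z_{0,i}|\leq C_n\mu_i$ from $0\in C_nE(\mu,z_0)$ becomes crucial, as it prevents the linear term $\langle z_0,x\rangle$ from inflating $h_K$ beyond a dimensional multiple of $\mu_1$ on the cap. Tracking dimensional constants so that the final coefficient matches $2^pC_n^{p-1}$ is straightforward bookkeeping once $\epsilon=\epsilon(n)$ is fixed.
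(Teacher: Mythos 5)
Your argument is correct and reaches the same self-improving inequality as the paper, but by a genuinely more elementary route in the middle step. The paper also starts from $(n+1)\,Vol(K)=\int_{{\mathbb{S}}^n}h_K^p\,d\sigma$ and closes with the same John-volume comparison $\mu_1\cdots\mu_{n+1}\lesssim C_n^{n+1}Vol(K)$, but in between it (i) reduces the translated ellipsoid to the centered one by observing that $z_0\mapsto\int_{{\mathbb{S}}^n}h_{E(\mu,z_0)}^p$ is minimized at $z_0=0$, and (ii) invokes Lemma \ref{l5.3}, whose lower bound $\int_{{\mathbb{S}}^n}h_\mu^p\geq 2^pC_n^{-1}\mu_1^{p+n}/(\mu_2\cdots\mu_{n+1})$ is proved by an induction on dimension with explicit one-dimensional integrals. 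You instead handle the translate directly through $|z_{0,i}|\leq C_n\mu_i$ (valid since $0\in K\subset C_nE(\mu,z_0)$) and replace the whole of Lemma \ref{l5.3} by the anisotropic cap $R_\epsilon$ around $\pm e_1$, on which $h_K\lesssim C_n\mu_1$ and $|R_\epsilon|\gtrsim\mu_1^n/(\mu_2\cdots\mu_{n+1})$; this yields the same lower bound $\int h_K^p\gtrsim (C_n\mu_1)^p\,\mu_1^{n}/(\mu_2\cdots\mu_{n+1})$ with no induction and no centering lemma, at the cost of a slightly worse base in the exponential constant (roughly $(3C_n)^{p}$-type rather than $2^pC_n^{-1}$ at the level of the integral bound, and $(3C_n)^{p/2}$ after the square root). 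The only point to state honestly is that your final constant is not literally $2^pC_n^{p-1}$ with the same $C_n$ as in \eqref{e5.6}; it is of the form $a_n b_n^{p/2}$, which for $p<0$ can be rewritten as $2^p\widehat C_n^{p-1}$ after enlarging the dimensional constant, and this is all that is needed both for the statement as phrased and for its later use in \eqref{e5.31}, where only boundedness of the constant for $p$ in a compact subset of $(-\infty,-n-1)$ matters.
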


 The proof of Proposition \ref{p5.2} will be divided by several fundamental ellipsoid lemmas which would be useful in this and the next subsections. We have a first lemma.

\begin{lemm}\label{l5.2}
  For dimension $n\geq2$, the support function $h(\mu,z_0)$ of the ellipsoid $E(\mu,z_0)$ for $\mu_i>0, i=1,\ldots,n+1$ and $z_0\in{\R}^{n+1}$ satisfies that
     \begin{eqnarray}\nonumber\label{e5.8}
      \Delta_1&:=&\det(\nabla^2h(\mu,z_0)+h(\mu,z_0)I)\\
       &&-\left(\Pi_{i=1}^{n+1}\mu_i^2\right)\left(h(\mu,z_0)-\langle z_0,x\rangle\right)^{-n-2}=0, \ \ \forall x\in{\S}^n.
     \end{eqnarray}
 Consequently, each ellipsoid $E(\mu,z_0)$ is not a solution of \eqref{e1.2}, except the origin-centering unit sphere ${\S}^{n}$.
\end{lemm}

\begin{proof} For arbitrary $\mu$, after scaling $E(\mu,0)$ by $E(\nu,0)=E(\mu,0)\Pi_{i=1}^{n+1}\mu_i^{-1/(n+1)}$, there holds $\Pi_{i=1}^{n+1}\nu_i=1$. As is well known, $E(\nu,0)$ satisfies \eqref{e1.2} for $p=-n-1$ (actually, this follows from the characterization of origin-centered ellipsoids as the unique smooth solutions of the centro-affine Minkowski problem, see for example \cite{Ca72}). So, we conclude that $h_\mu$ is a solution to the equation
  \begin{eqnarray}\nonumber\label{e5.9}
     \Delta_2&:=&\det(\nabla^2h_\mu+h_\mu I)\\
     &&-\left(\Pi_{i=1}^{n+1}\mu_i^2\right)h_\mu^{-n-2}=0, \ \ \forall x\in{\S}^n.
  \end{eqnarray}
For ellipsoid $E(\mu,z_0)$ with centroid $z_0$, the support function is given by
  \begin{eqnarray*}
    h_{E(\mu,z_0)}&=&h_{E(\mu,0)}+\langle z_0,x\rangle\\
    &=&\sqrt{\sum_{i=1}^{n+1}\mu_i^2x_i^2}+\langle z_0,x\rangle, \ \ \forall x\in{\S}^n.
  \end{eqnarray*}
Thus, we conclude that $E(\mu,z_0)$ satisfies \eqref{e5.8} since
   $$
    \nabla^2h_{E(\mu,z_0)}+h_{E(\mu,z_0)}I=\nabla^2h_\mu+h_\mu I
   $$
by Gauss-Weingarten's relation $\nabla^2x+xI=0$. So, all ellipsoids $E(\mu,z_0)$ are not solutions of \eqref{e1.2} unless it is the origin-centering unit sphere ${\S}^n$. We have shown the conclusion of the lemma.
\end{proof}

\begin{lemm}\label{l5.3}
 Let $Vol(B_1)$ denote the volume of the unit ball ${B_1}$, we have
   \begin{equation}\label{e5.10}
      \int_{{\mathbb{S}}^n}h_\mu^{-n-1}=(n+1)\frac{Vol(B_1)}{\mu_1\mu_2\ldots\mu_{n+1}}
   \end{equation}
 holds for support function $h_\mu$ of the ellipsoid $E(\mu)$. Moreover, for each $p<-n-1$, there exists a positive constant $C_{n}$ depending only on $n$ such that there holds
   \begin{equation}\label{e5.11} 2^{p}/C_{n}\leq\frac{\mu_1\mu_2\ldots\mu_{n+1}}{\mu_1^{p+n+1}}\int_{{\mathbb{S}}^n}h_\mu^p\leq (n+1)Vol(B_1).
   \end{equation}
\end{lemm}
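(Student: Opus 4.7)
The first identity \eqref{e5.10} will be established by computing the volume of the solid region $\Omega := \{y\in{\mathbb{R}}^{n+1} : \sum_i \mu_i^2 y_i^2 \leq 1\}$ in two ways. The linear change of variable $z_i = \mu_i y_i$ maps $B_1$ onto $\Omega$ with Jacobian $1/\prod \mu_i$, yielding $Vol(\Omega) = Vol(B_1)/(\mu_1\cdots\mu_{n+1})$. Polar coordinates $y = r\omega$ give $r\omega \in \Omega \Leftrightarrow r \leq 1/h_\mu(\omega)$, whence
\[
 Vol(\Omega) = \int_{{\mathbb{S}}^n} \int_0^{1/h_\mu(\omega)} r^n\,dr\,d\sigma(\omega) = \frac{1}{n+1}\int_{{\mathbb{S}}^n} h_\mu^{-n-1}\,d\sigma.
\]
Equating the two expressions proves \eqref{e5.10}. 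The upper bound in \eqref{e5.11} is then immediate: since $h_\mu(x)^2 = \sum_i \mu_i^2 x_i^2 \geq \mu_1^2$ on ${\mathbb{S}}^n$ and $p+n+1 < 0$, we have $h_\mu^{p+n+1} \leq \mu_1^{p+n+1}$, and the factorization $h_\mu^p = h_\mu^{-n-1}\cdot h_\mu^{p+n+1}$ combined with \eqref{e5.10} yields $\int h_\mu^p\,d\sigma \leq \mu_1^{p+n+1}\cdot(n+1)Vol(B_1)/\prod \mu_i$.

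For the lower bound I plan to localize to the set $A := \{x\in{\mathbb{S}}^n : h_\mu(x) \leq 2\mu_1\}$. On $A$, the reverse bound $h_\mu^{p+n+1} \geq (2\mu_1)^{p+n+1}$ holds (again by $p+n+1 < 0$), so the task reduces to proving the uniform estimate
\[
 \int_{A} h_\mu^{-n-1}\,d\sigma \geq \frac{c_n}{\mu_1\cdots\mu_{n+1}}
\]
for a universal constant $c_n > 0$, after which combining the two bounds produces the lower bound with $C_n = 1/(2^{n+1} c_n)$. To establish this estimate I would repeat the polar-coordinate computation of the first paragraph with the indicator $\chi_A$ inserted and apply the same change $z_i = \mu_i y_i$: the constraint $h_\mu(y/|y|) \leq 2\mu_1$ is scale-invariant in $|z|$ and reduces to $\sum_i \omega_i^2/\mu_i^2 \geq 1/(4\mu_1^2)$ for $\omega = z/|z| \in{\mathbb{S}}^n$. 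This set contains $\{|\omega_1|\geq 1/2\}$, since the single term $\omega_1^2/\mu_1^2$ already exceeds the threshold when $|\omega_1|\geq 1/2$; its spherical measure is a universal constant $c_n > 0$ depending only on $n$.

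The main obstacle is the recognition that no pointwise estimate on $h_\mu$ alone suffices for the lower bound: bounding $h_\mu \leq \mu_{n+1}$ would introduce a factor $(\mu_{n+1}/\mu_1)^{p+n+1}$ that degenerates as the ellipsoid becomes highly eccentric. The key insight is that the weighted measure $h_\mu^{-n-1}\,d\sigma$ is, up to the factor $\prod \mu_i$, the pushforward of Lebesgue measure on $\Omega$ under the radial projection, so that a uniformly positive share of its total mass necessarily concentrates in the region $A$ where $h_\mu$ is comparable to its minimum $\mu_1$, regardless of the eccentricity of the ellipsoid.
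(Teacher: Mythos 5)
Your proof is correct, but it follows a genuinely different route from the paper's, particularly for the lower bound in \eqref{e5.11}. For \eqref{e5.10} the paper does not use polar coordinates on the polar body; it invokes the fact (from \eqref{e5.8}) that $h_\mu$ solves the centroaffine equation $\det(\nabla^2h_\mu+h_\mu I)=\big(\Pi_i\mu_i^2\big)h_\mu^{-n-2}$ and then applies the volume formula $(n+1)Vol(E(\mu))=\int_{{\mathbb{S}}^n}h_\mu\det(\nabla^2h_\mu+h_\mu I)$; your direct computation of $\int_{{\mathbb{S}}^n}h_\mu^{-n-1}$ as $(n+1)$ times the volume of $\{y:\ \sum_i\mu_i^2y_i^2\le1\}$ is an equally valid and more self-contained derivation (minor slip: it is $y_i=z_i/\mu_i$ that maps $B_1$ onto $\Omega$, not $z_i=\mu_iy_i$, but the volume identity you draw from it is right). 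For the lower bound, the paper proves the more general claim \eqref{e5.12} about $\int_{\partial B^n_1}(t+\mu_1^2x_1^2+\cdots+\mu_n^2x_n^2)^{p/2}$ by induction on the dimension, with explicit one-dimensional integral estimates and a case analysis depending on whether $(\mu_2^2-\mu_1^2)/(t+\mu_1^2)$ exceeds $1$, and then sets $t=0$; you instead restrict the integral to $A=\{h_\mu\le 2\mu_1\}$, use $h_\mu^{p}\ge (2\mu_1)^{p+n+1}h_\mu^{-n-1}$ there, and bound $\int_A h_\mu^{-n-1}$ from below by reinterpreting it as $(n+1)$ times the volume of a radial subcone of $\Omega$, which after the linear change of variables contains the cone over the cap $\{|\omega_1|\ge 1/2\}$ inside $B_1$; your reduction of the constraint to $\sum_j\omega_j^2/\mu_j^2\ge 1/(4\mu_1^2)$ is correct. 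Your argument avoids the induction entirely, is uniform in $p<-n-1$ with the $2^p$ factor isolated exactly as required, and yields a transparent constant $C_n$; the paper's parameter $t$ and the inductive claim \eqref{e5.12} are needed only internally to its own induction, so nothing used elsewhere in the paper is lost by your shortcut.
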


\begin{proof} As in the proof of Lemma \ref{l5.2}, it follows from  the volume identity
    $$
     Vol(K)=\frac{1}{n+1}\int_{{\mathbb{S}}^n} h_K\det(\nabla^2h_K+h_KI)
    $$
together with \eqref{e5.8} that
  \begin{eqnarray*}
   \int_{{\mathbb{S}}^n}h_\mu^{-n-1}&=&(n+1)\frac{Vol(E(\mu))}{\mu_1^2\mu_2^2\ldots\mu_{n+1}^2}\\
   &=&(n+1)\frac{Vol(B_1)}{\mu_1\mu_2\ldots\mu_{n+1}}
  \end{eqnarray*}
and thus \eqref{e5.10} follows. R.H.S. of \eqref{e5.11} can be verified by applying \eqref{e5.10} to deduce
  \begin{eqnarray*}
   \int_{{\mathbb{S}}^n}h^p_\mu&\leq&\mu_1^{p+n+1}\int_{{\mathbb{S}}^n}h_\mu^{-n-1}\\
     &=&(n+1)\frac{Vol(B_1)\mu_1^{p+n+1}}{\mu_1\mu_2\ldots\mu_{n+1}}.
  \end{eqnarray*}
To show L.H.S. of \eqref{e5.11}, defining the integration
  $$
   {\mathcal{J}}^1(p,t,\mu,n)=\int_{\partial B^n_1}(t+\mu_1^2x_1^2+\ldots+\mu_n^2x_n^2)^{p/2}
  $$
for $n$ dimensional unit ball $B^n_1$, nonnegative $t$ and $0\leq\mu_1\leq\ldots\leq\mu_n$, we claim that
   \begin{equation}\label{e5.12}
     {\mathcal{J}}^1(p,t,\mu,n)\geq\frac{2^p(t+\mu_1^2)^{(p+n)/2}}{C_n\sqrt{(t+\mu_1^2)(t+\mu_2^2)\ldots(t+\mu_{n+1}^2)}}
   \end{equation}
holds for some universal constant $C_{n}>0$ depending only on $n$. Actually, when $n=2, \mu=(\mu_1,\mu_2)$, we have
   \begin{eqnarray}\nonumber\label{e5.13}
     {\mathcal{J}}^1(p,t,\mu,2)&=&4\int^{\pi/2}_0\left[t+\mu_1^2+(\mu_2^2-\mu_1^2)\sin^2\theta\right]^{p/2}d\theta\\
     &\geq& 4(t+\mu_1^2)^{p/2}\int^{\pi/2}_0\left(1+\frac{\mu_2^2-\mu_1^2}{t+\mu_1^2}\theta^2\right)^{p/2}d\theta\\ \nonumber
     &\geq&4(t+\mu_1^2)^{p/2}\sqrt{\frac{t+\mu_1^2}{\mu_2^2-\mu_1^2}}\int^{\frac{\pi}{2}\sqrt{\frac{\mu_2^2-\mu_1^2}{t+\mu_1^2}}}_0(1+\alpha^2)^{p/2}d\alpha
   \end{eqnarray}
if $\mu_1\not=\mu_2$. When $\mu_1=\mu_2$, \eqref{e5.12} is clearly true by the first inequality of \eqref{e5.13}. Supposing that $\mu_1<\mu_2$ and noting that for $\frac{\mu_2^2-\mu_1^2}{t+\mu_1^2}\leq1$, we have $t+\mu_2^2\leq2(t+\mu_1^2)$. Then, it follows from \eqref{e5.13} that
  \begin{eqnarray*}
    {\mathcal{J}}^1(p,t,\mu,2)&\geq&2\pi2^{p/2}(t+\mu_1^2)^{p/2}\sqrt{\frac{t+\mu_1^2}{\mu_2^2-\mu_1^2}}\sqrt{\frac{\mu_2^2-\mu_1^2}{t+\mu_1^2}}\\
  &\geq&2\sqrt{2}\pi2^{p/2}\frac{(t+\mu_1^2)^{(p+2)/2}}{\sqrt{t+\mu_1^2}\sqrt{t+\mu_2^2}}
  \end{eqnarray*}
and hence \eqref{e5.12} follows. Now, assuming $\frac{\mu_2^2-\mu_1^2}{t+\mu_1^2}>1$, then $1/2<\frac{\mu_2^2-\mu_1^2}{t+\mu_2^2}\leq1$. So, it follows from \eqref{e5.13} that
   $$
    {\mathcal{J}}^1(p,t,\mu,2)\geq C_n^{-1}\sqrt{-p}(t+\mu_1^2)^{p/2}\sqrt{\frac{t+\mu_1^2}{\mu_2^2-\mu_1^2}}
   $$
and thus \eqref{e5.12} follows. When dimension $n\geq3$, we use
  \begin{eqnarray*}
   {\mathcal{J}}^1(p,t,\mu,n)&=&(t+\mu_1^2)^{p/2}\int_{\partial B_1^n}\left(1+\frac{\mu_2^2-\mu_1^2}{t+\mu_1^2}x_2^2+\ldots+\frac{\mu_n^2-\mu_1^2}{t+\mu_1^2}x_n^2\right)^{p/2}\\
     &=&(t+\mu_1^2)^{p/2}\int^1_{-1}(1-x_1^2)^{(p+n-2)/2}\left(\int_{\partial B^{n-1}_1}(t'+\nu_2^2x_2^2+\ldots+\nu_n^2x_n^2)^{p/2}\right)dx_1\\
     &=&(t+\mu_1^2)^{p/2}\int^1_{-1}(1-x_1^2)^{(p+n-2)/2}{\mathcal{J}}^1(p,t',\nu,n-1)dx_1
  \end{eqnarray*}
to perform mathematical induction, where
   $$
    t'\equiv\frac{1}{1-x_1^2}, \ \ \nu_i\equiv\sqrt{\frac{\mu_i^2-\mu_1^2}{t+\mu_1^2}}, \ \ i=2,3,\ldots,n.
   $$
By induction hypothesis for $n-1$, we have
  \begin{eqnarray}\nonumber\label{e5.14}
   {\mathcal{J}}^1(p,t,\mu,n)&\geq&C_n^{-1}2^{p/2}\int^1_{-1}(1-x_1^2)^{\frac{n-2}{2}}y_2^{\frac{p+n-2}{2}}\frac{dx_1}{\sqrt{y_3\ldots y_n}}\\
   &\geq&\frac{2^{p/2}}{C_n\Pi_{i=2}^{n}\sqrt{t+\mu_i^2}}\int^1_{-1}(1-x_1^2)^{\frac{n-2}{2}}\big(t+(1-x_1^2)\mu_1^2+x_1^2\mu_2^2\big)^{\frac{p+n-1}{2}}dx_1
  \end{eqnarray}
for $y_i\equiv t+(1-x_1^2)\mu_1^2+x_1^2\mu_i^2, i=2,3,\ldots,n$. We claim that
  \begin{eqnarray}\nonumber\label{e5.15}
   {\mathcal{J}}^2(\mu_1,\mu_2)&:=&\int^1_{-1}(1-x_1^2)^{\frac{n-2}{2}}\big(t+(1-x_1^2)\mu_1^2+x_1^2\mu_2^2\big)^{\frac{p+n-1}{2}}dx_1\\
    &\geq&C_n^{-1}2^{\frac{p+n-1}{2}}(t+\mu_1^2)^{\frac{p+n-1}{2}}.
  \end{eqnarray}
Actually, reformulating ${\mathcal{J}}^2(\mu_1,\mu_2)$ by
 \begin{eqnarray*}
  {\mathcal{J}}^2(\mu_1,\mu_2)&=&2(t+\mu_1^2)^{\frac{p+n-1}{2}}\int^1_0(1-x_1^2)^{\frac{n-2}{2}}\left(1+\frac{\mu_2^2-\mu_1^2}{t+\mu_1^2}x_1^2\right)^{\frac{p+n-1}{2}}dx_1\\
  &=&2(t+\mu_1^2)^{\frac{p+n-1}{2}}\int^{\pi/2}_0\cos^{n-1}\theta\left(1+\frac{\mu_2^2-\mu_1^2}{t+\mu_1^2}\sin^2\theta\right)^{\frac{p+n-1}{2}}d\theta\\
  &\geq&2(t+\mu_1^2)^{\frac{p+n-1}{2}}\int^{\pi/2}_0(1-\theta^2)^{\frac{n-1}{2}}\left(1+\frac{\mu_2^2-\mu_1^2}{t+\mu_1^2}\theta^2\right)^{\frac{p+n-1}{2}}d\theta,
 \end{eqnarray*}
one can verify \eqref{e5.15} as in the proof of the first claim \eqref{e5.12} for $n=2$. Substituting \eqref{e5.15} into \eqref{e5.13}, we obtain the desired inequality \eqref{e5.12} by mathematical induction. Now, L.H.S. of \eqref{e5.11} is a special case of \eqref{e5.12} by taking $t=0$ and replacing $n$ by $n+1$. The proof of the lemma has been completed.
\end{proof}

\begin{proof}[Proof of Proposition \ref{p5.2}] By John's lemma, one may assume that
   \begin{equation}\label{e5.16}
     C_n^{-1}E(\mu,z_0)\subset K\subset C_nE(\mu,z_0)
   \end{equation}
holds for some ellipsoid $E(\mu,z_0)$ satisfying $0<\mu_1\leq\mu_2\leq\ldots\leq\mu_{n+1}$ and some $z_0$, by rotation if necessary. Noting that by Minkowski volume identity,
   \begin{eqnarray*}
     (n+1){Vol(K)}&=&\int_{{\mathbb{S}}^n}h_K\det(\nabla^2h_K+h_KI)=\int_{{\mathbb{S}}^n}h_K^p\\
       &\geq& C_{n}^{p}\int_{{\mathbb{S}}^n}h_{C_nE(\mu,z_0)}^p\\ \nonumber
      &\geq& C_n^p\int_{{\S}^n}h_{C_nE(\mu)}^p\\
      &\geq& C_n^{p-1}2^p\frac{\mu_1^{p+n+1}}{\mu_1\mu_2\ldots\mu_{n+1}}\\
      &\geq&2^pC_{n}^{p-1}\mu_1^{p+n+1}{Vol(K)}^{-1}
   \end{eqnarray*}
holds by Lemma \ref{l5.3} and the fact that the function $\int_{{\mathbb{S}}^n}h_{E(\mu,z_0)}^p$ of $z_0$ attains its unique minimum at the origin, the desired inequality \eqref{e5.7} holds true. The proof is completed.
\end{proof}

\subsection{{\it A-priori} estimates II and proof of Theorem \ref{t1.5}}

  In this subsection, we turn to show the other {\it a-priori} estimate lemmas and then prove the Main Theorem \ref{t1.5} in the supercritical range. The following proposition shows that once lower or upper bounds obtained, then the volume ${Vol(K)}$, $m\equiv\min h_K$, $M\equiv\max h_K$ are all bounded.

\begin{prop}\label{p5.3}
  Letting $n\geq2$, there exists positive constant $C_{n}$ depending only on $n$, such that
    \begin{equation}\label{e5.17}
    \begin{cases}
     M\leq C_{n}^{2-p}m^{p-n}, & Vol(K)\leq C_{n}^{1-p}m^p,\\
     m^{p+n}\leq C_n(-p)^nM^{2n+1}, & Vol(K)\leq C_{n}M^{n+1}
    \end{cases}
   \end{equation}
  holds for solution $K\in{\mathcal{C}}_{n,p}$ of \eqref{e1.2} for each $p<0$.
\end{prop}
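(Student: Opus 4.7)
The plan is to prove the four inequalities essentially independently, chaining elementary geometric facts with the Minkowski-type identity $(n+1)\vol(K)=\int_{{\S}^n}h_K^{p}d\sigma$, which follows by multiplying \eqref{e1.2} by $h_K$ and integrating over ${\S}^n$.

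The fourth inequality $\vol(K)\leq C_n M^{n+1}$ is immediate, since $h_K\leq M$ forces $K\subset\overline{B_M(0)}$ and hence $\vol(K)\leq \vol(B_1)\,M^{n+1}$. The second inequality $\vol(K)\leq C_n^{1-p}m^p$ follows from the Minkowski identity together with $h_K\geq m$ and $p<0$, which give $h_K^{p}\leq m^p$ and so $(n+1)\vol(K)\leq |{\S}^n|\,m^p$. For the first inequality $M\leq C_n^{2-p}m^{p-n}$, one notes that $K\supset B_m(0)$ (since $h_K\geq m$) and that $K$ contains a boundary point $z_0$ with $|z_0|\geq M$ (choose $z_0$ realising $z_0\cdot x^\ast=M$, where $x^\ast$ maximises $h_K$). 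Consequently $K$ contains the cone with apex $z_0$ over the $n$-disk $\{z\perp z_0:|z|\leq m\}$, whose $(n+1)$-dimensional volume is $c_n m^n M$. Comparing with the upper bound from the second inequality then yields $M\leq C_n^{2-p}m^{p-n}$ after absorbing dimensional constants into $C_n$.

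The delicate estimate is the third one. Pick $x_0\in{\S}^n$ with $h_K(x_0)=m$. Since $K\subset\overline{B_M(0)}$, the support function $h_K$ is $M$-Lipschitz on ${\S}^n$, hence $h_K(x)\leq m+M\,d(x,x_0)$ for the spherical distance $d$. Using polar coordinates around $x_0$ with the bound $\sin^{n-1}\rho\geq(2\rho/\pi)^{n-1}$ on $[0,\pi/2]$, followed by the substitution $\rho=mt/M$, one obtains
\[
\int_{{\S}^n}h_K^{p}\,d\sigma\;\geq\; c_n\,\frac{m^{p+n}}{M^n}\int_0^{\pi M/(2m)}(1+t)^{p}\,t^{n-1}\,dt.
\]
As its upper limit grows, the remaining $t$-integral approaches the beta integral $B(n,-p-n)=\Gamma(n)\Gamma(-p-n)/\Gamma(-p)$, and the gamma-function ratio $\Gamma(-p-n)/\Gamma(-p)=\prod_{j=1}^{n}(-p-j)^{-1}\sim(-p)^{-n}$ supplies the claimed polynomial factor in $-p$. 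Moreover, since $M/m\geq 1$ the upper endpoint $\pi M/(2m)\geq\pi/2$ is bounded away from $0$, and for $-p$ in a bounded range the integral is controlled from below by a positive $n$-dependent constant that can be absorbed into $C_n$. Combining this with the Minkowski identity and the fourth inequality $\vol(K)\leq C_n M^{n+1}$ then produces $m^{p+n}\leq C_n(-p)^n M^{2n+1}$.

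The main obstacle is precisely this integral asymptotic. A naive estimate — using only $h_K\leq 2m$ on a geodesic ball of radius $m/M$ around $x_0$ — would yield an exponential factor $2^{-p}$ in place of the polynomial $(-p)^n$. To recover the form stated in the proposition one really needs the beta-function identity together with the product asymptotic for $\Gamma(-p-n)/\Gamma(-p)$, as well as a careful check that truncating the $t$-integral at $\pi M/(2m)$ (rather than $\infty$) does not spoil the asymptotic; this is automatic because the integrand $(1+t)^p t^{n-1}$ concentrates its mass near $t\sim(n-1)/(-p)\leq \pi/2 \leq \pi M/(2m)$ for large $-p$.
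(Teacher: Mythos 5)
Your proposal is correct and, for the two nontrivial bounds, follows essentially the same route as the paper: the cone with apex at a far boundary point over an $n$-disk of radius $m$ gives $Vol(K)\geq c_n m^nM$, the inclusion $K\subset\overline{B_M(0)}$ gives $Vol(K)\leq C_nM^{n+1}$, and the third inequality comes from the same Lipschitz bound $h_K(x)\leq m+CM\,\dist(x,x_0)$ inserted into $\int_{{\mathbb S}^n}h_K^p\,d\sigma=(n+1)Vol(K)$ in polar coordinates and then compared with $Vol(K)\leq C_nM^{n+1}$. The one genuine difference is the volume upper bound in terms of $m$: the paper derives $Vol(K)\leq C_n^{1-p}m^p$ via John's lemma together with a H\"older estimate on $\int h_K\det(\nabla^2h_K+h_KI)$, whereas you read off the (stronger) bound $Vol(K)\leq C_nm^p$ in one line from $(n+1)Vol(K)=\int_{{\mathbb S}^n}h_K^p\,d\sigma$ and $h_K\geq m$, $p<0$; this is simpler and still yields the stated form. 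You are also more careful than the paper at the key analytic step, which the paper merely asserts, namely $\int_0^{\pi M/(2m)}t^{n-1}(1+Ct)^{p}\,dt\geq C_n^{-1}(-p)^{-n}$: your beta/Gamma asymptotics and the truncation check are the right justification for $-p$ large. One caveat, shared with the paper's own proof: your remark that for $-p$ in a bounded range the integral ``can be absorbed into $C_n$'' only works if $-p$ is also bounded away from $0$, since the required lower bound $(-p)^{-n}$ blows up as $p\to0^-$ (and indeed the third inequality itself fails for the unit ball as $p\to0^-$ with an $n$-only constant); this is harmless here because the proposition is only invoked for $p<-n-1$.
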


\begin{proof} First, we claim that there exists a universal constant $C_n>0$, such that
   \begin{equation}\label{e5.18}
    {Vol(K)}\leq C_n^{1-p}m^{p}.
   \end{equation}
Actually, by John's lemma, after rotation if necessary, one may assume that
   $$
     C_n^{-1}E(\mu,z_0)\subset K\subset C_nE(\mu,z_0)
   $$
holds for ellipsoid $E(\mu,z_0)$ with $0<\mu_1\leq\mu_2\leq\ldots\leq\mu_{n+1}$ and centroid $z_0$. Therefore, we have the comparison formula
  \begin{eqnarray}\nonumber\label{e5.19}
     |\partial K|&\simeq& C_n\sigma_{n}(\mu)\\
      &\simeq& C_n\sigma_{n+1}(\mu)/\mu_1\\ \nonumber
      &\leq& C_n{Vol(K)}/m,
  \end{eqnarray}
where we have used $m\leq C_n\mu_1$ and $V(K)\geq C_n^{-1}\mu_1\cdots\mu_{n+1}$ by John's normalization. Applying the H\"{o}lder's inequality, we have
  \begin{eqnarray}\nonumber\label{e5.20}
   {Vol(K)}&=&\frac{1}{n+1}\int_{{\mathbb{S}}^{n}}h_Kd\sigma_K\\
    &\leq&\frac{1}{n+1}\left(\int_{{\mathbb{S}}^{n}}h_K^{1-p}d\sigma_K\right)^{\frac{1}{1-p}}\left(\int_{{\mathbb{S}}^{n}}d\sigma_K\right)^{\frac{-p}{1-p}}\\ \nonumber
    &\leq& C_n{Vol(K)}^{\frac{-p}{1-p}}m^{\frac{p}{1-p}}.
  \end{eqnarray}
So, \eqref{e5.18} follows when $p<0$. Secondly, we claim that
 \begin{equation}\label{e5.21}
  Vol(K)\geq C_{n}^{-1}(-p)^{-n}M^{-n}m^{p+n}
 \end{equation}
holds for all $p<0$. Without loss of generality, one may assume that $m=\min h=h(x_0), x_0\in{\mathbb{S}}^{n}$ in spherical coordinates of ${\mathbb{S}}^{n}$ at south polar. By convexity, there holds
  \begin{equation}\label{e5.22}
    h_K(x)\leq m+C_1Mdist(x,x_0), \ \ \forall x\in{\mathbb{S}}^{n}
  \end{equation}
for geodesic distance $dist(x,x_0)$ and some positive constant $C_1$ depending only on $n$. Hence,
  \begin{eqnarray*}
   {Vol(K)}&=&\frac{1}{n+1}\int_{{\mathbb{S}}^{n}} h_K\det(\nabla^2h_K+h_KI)\\
   &=&\frac{1}{n+1}\int_{{\mathbb{S}}^{n}}h_K^{p}\\
   &\geq& C_2\int^{\pi/2}_0\frac{\theta^{n-1} d\theta}{(m+C_1M\theta)^{-p}}\\
   &=& C_2M^{-n}m^{p+n}\int^{M\pi/(2m)}_0\frac{t^{n-1}}{(1+C_1t)^{-p}}dt,
  \end{eqnarray*}
where $\theta\equiv angle_{{\R}^{n+1}}(x,x_0)$ and $C_2>0$ is another constant depending only on $n$. Noting that
 \begin{eqnarray*}
  \int^{M\pi/(2m)}_0\frac{t^{n-1}}{(1+C_1t)^{-p}}dt&\geq& \int^{(-pC_1)^{-1}}_0 e^{-1}t^{n-1}dt\\
   &\geq& C_n^{-1}(-p)^{-n},
 \end{eqnarray*}
the proof of \eqref{e5.21} has been shown.

 Now, drawing a cone $V$ with the base of a $n$ dimensional disc $B^{n}_m$ and with the height $M$, we have $V$ is contained inside $K$ by convexity. Thus, there holds
  \begin{equation}\label{e5.23}
   C_n^{-1}m^{n}M\leq {Vol(K)}\leq C_nM^{n+1}
  \end{equation}
 for another universal constant $C_n>0$. Combing \eqref{e5.18} with L.H.S. of \eqref{e5.23}, we obtain that
    $$
     \begin{cases}
      M\leq C_n^{2-p}m^{p-n}, \\
      {Vol(K)}\leq C_n^{1-p}m^p
     \end{cases}
    $$
for some universal constant $C_n>0$. Substituting \eqref{e5.21} into R.H.S. of \eqref{e5.23}, it yields that
    $$
     \begin{cases}
       m^{p+n}\leq C_n(-p)^nM^{2n+1}, \\
       {Vol(K)}\leq C_nM^{n+1}.
     \end{cases}
    $$
The proof of the proposition is completed.
\end{proof}

 Next, let us prove the following upper bound of the solutions of \eqref{e1.2}. This result was inspired by the work of B\"{o}r\"{o}czky-Saroglou \cite{BS23}.

\begin{prop}\label{p5.4}
  Letting $n\geq2, p\in{\mathcal{P}}\Subset(-\infty,-n-1)$, there exists positive constant $C_{n,{\mathcal{P}}}$ depending only on $n$ and compact subset ${\mathcal{P}}$, such that
    \begin{equation}\label{e5.24}
      M=\max_{x\in{\mathbb{S}}^n}h_K(x)\leq C_{n,{\mathcal{P}}}
    \end{equation}
  holds for solutions $h_K$ of \eqref{e1.2}.
\end{prop}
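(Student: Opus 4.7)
The plan is to argue by contradiction via a blowup argument patterned on Böröczky--Saroglou~\cite{BS23}, with the ellipsoid lemmas of Subsection 5.1 supplying the key a priori input in the supercritical regime. Assume toward a contradiction that there exist solutions $K_j \in \mathcal{C}_{n,p_j}$ with $p_j \in \mathcal{P}$ and $M_j := \max_{\mathbb{S}^n} h_{K_j} \to \infty$; after passing to a subsequence I may also assume $p_j \to p_\infty \in \overline{\mathcal{P}} \subset (-\infty, -n-1)$. The natural rescaling is $\tilde K_j := K_j / M_j$, whose support function $\tilde h_j := h_{K_j}/M_j$ satisfies $\max_{\mathbb{S}^n} \tilde h_j = 1$ and
\[
  \det(\nabla^2 \tilde h_j + \tilde h_j I) \;=\; M_j^{\,p_j-n-1}\, \tilde h_j^{\,p_j-1},
\]
where the coefficient $M_j^{p_j-n-1}$ tends to $0$ uniformly since $p_j - n - 1$ is negative and bounded away from $0$ on $\mathcal{P}$. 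Combined with the upper bound $\operatorname{Vol}(\tilde K_j) = \operatorname{Vol}(K_j)/M_j^{n+1} \leq C$ inherited from Proposition~\ref{p5.3}, Blaschke's selection theorem produces a subsequential Hausdorff limit $\tilde K_\infty \ni 0$ satisfying $\max h_{\tilde K_\infty} = 1$.

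I would then split according to whether $\tilde K_\infty$ is full-dimensional. In the non-degenerate case, $\min_{\mathbb{S}^n} h_{\tilde K_\infty} \geq c_0 > 0$, so by uniform convergence of support functions under Hausdorff convergence, $\tilde h_j \geq c_0/2$ for large $j$, and the right-hand side $M_j^{p_j-n-1} \tilde h_j^{p_j-1}$ tends to $0$ uniformly on $\mathbb{S}^n$. Aleksandrov's weak continuity theorem for surface area measures then forces $dS_{\tilde K_\infty} \equiv 0$, which is impossible for a full-dimensional convex body. The degenerate case, where $\tilde K_\infty$ lies in a proper affine subspace of $\mathbb{R}^{n+1}$ and the smallest John semi-axis $\tilde \mu_{1,j} := \mu_{1,j}/M_j$ tends to $0$, is the substantive one. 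Here Proposition~\ref{p5.2}, transported through the rescaling, yields
\[
  \operatorname{Vol}(\tilde K_j) \;\geq\; c_{\mathcal{P}}\, M_j^{(p_j-n-1)/2}\, \tilde\mu_{1,j}^{\,(p_j+n+1)/2},
\]
while the John inclusion gives $\operatorname{Vol}(\tilde K_j) \leq C_n \tilde\mu_{1,j} \tilde\mu_{2,j} \cdots \tilde\mu_{n+1,j} \leq C_n \tilde\mu_{1,j}$. Combining these with the integral identity $(n+1)\operatorname{Vol}(\tilde K_j) = M_j^{p_j-n-1} \int_{\mathbb{S}^n} \tilde h_j^{p_j} d\sigma$ and the sharp lower bound for $\int h_\mu^p$ of Lemma~\ref{l5.3} applied to the John ellipsoids of $\tilde K_j$, I aim to extract a quantitative incompatibility forcing $\tilde \mu_{1,j}$ to be bounded away from $0$, thereby reducing the degenerate case back to the non-degenerate one and closing the contradiction.

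The main obstacle I anticipate is precisely this degenerate regime: the singular factor $\tilde h_j^{p_j-1}$ blows up on the set where $\tilde h_\infty$ vanishes, and a priori can compensate for the decaying coefficient $M_j^{p_j-n-1}$, so merely passing to the limit in the Monge--Ampère equation is not enough. The balancing of these competing powers requires the sharp ellipsoid integral estimates of Subsection 5.1 in an essential way, and this is where Lemma~\ref{l5.2} (ellipsoids are not solutions in the supercritical range) and Lemma~\ref{l5.3} (two-sided bounds on $\int h_\mu^p$ with precise dependence on the semi-axes $\mu_1 \leq \ldots \leq \mu_{n+1}$) work in tandem: they provide the quantitative rigidity needed to exclude ellipsoid-like degenerate blowup limits, and it is exactly this exclusion that produces the uniform upper bound $M \leq C_{n,\mathcal{P}}$.
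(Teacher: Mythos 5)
Your setup (contradiction, normalization $\tilde K_j=K_j/M_j$, Blaschke selection, treating the full-dimensional limit by noting that the rescaled equation has right-hand side $M_j^{p_j-n-1}\tilde h_j^{p_j-1}\to0$ away from $\{\tilde h_\infty=0\}$, so the limit body would have vanishing volume) matches the first half of the paper's argument, the claim that $k\le n$ in \eqref{e5.25}. One small correction there: by Proposition \ref{p5.3} one has $m_j\to0$ whenever $M_j\to\infty$, so the origin always migrates to $\partial\tilde K_\infty$ and the case $\min_{\mathbb{S}^n}h_{\tilde K_\infty}\ge c_0>0$ never occurs; the argument must be run on $\omega=\{h_{\tilde K_\infty}>0\}$ and then use $(n+1)\mathrm{Vol}=\int_\omega h\,dS$, exactly as the paper does. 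This is fixable.

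The genuine gap is the degenerate case, which you yourself flag as the substantive one and then only sketch. The "quantitative incompatibility" you hope to extract cannot come from the ingredients you list: Proposition \ref{p5.2} \emph{is} precisely the combination of the volume identity $(n+1)\mathrm{Vol}(K)=\int_{\mathbb{S}^n}h_K^p$ with the lower bound of Lemma \ref{l5.3} applied to the John ellipsoid (the upper bound of Lemma \ref{l5.3} is only for centered ellipsoids and does not transfer, since $\int h_{E(\mu,z_0)}^p$ is not controlled by $\mu$ alone), so recombining these statements yields nothing beyond Proposition \ref{p5.2} itself. Quantitatively, writing $p=-n-1-s$, Proposition \ref{p5.2} together with $\mathrm{Vol}(\tilde K_j)\le C_n\tilde\mu_{1,j}$ only forces $\tilde\mu_{1,j}\ge c\,M_j^{-(n+1+s)/(2+s)}$; this is perfectly consistent with $\tilde\mu_{1,j}\to0$ (e.g. $\tilde\mu_{1,j}=M_j^{-1/2}$), so no contradiction and no reduction to the non-degenerate case is possible along this route. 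Lemma \ref{l5.2} plays no role here either (in the paper it is used only in the proof of Theorem \ref{t1.1}). The paper closes the degenerate case by a genuinely different mechanism: it normalizes by the full John transformation $T^{(j)}=\mathrm{diag}(1/\mu_1^{(j)},\ldots,1/\mu_1^{(j)},1/\mu_{k+1}^{(j)},\ldots,1/\mu_{n+1}^{(j)})$, invokes the B\"{o}r\"{o}czky--Saroglou change-of-variables formula \eqref{e5.30} for the $L_p$ surface area measure under linear maps (Proposition \ref{p5.2} enters exactly to bound $|\det T^{(j)}|^2(\mu_1^{(j)})^{n+1+p}$ as in \eqref{e5.31}--\eqref{e5.32}), shows that the limiting measure $dS_{p^{(\infty)}}(L^{(\infty)})$ is a rotation-invariant Radon measure concentrated on a proper subsphere $\mathbb{S}^n\cap H_k$ with $1\le k\le n$, hence a multiple of $\mathcal{H}^{k-1}|_{H_k\cap\mathbb{S}^n}$, and then obtains the contradiction from Saroglou's non-existence theorem (\cite{Sa21}, Theorem 1.3). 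These measure-theoretic and symmetry ingredients, which carry the entire weight of the degenerate case, are absent from your proposal, so as it stands the proof is not complete.
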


\begin{proof}
 Using again the John's lemma, one may assume that
   $$
     C_n^{-1}E(\mu,z_0)\subset K\subset C_nE(\mu,z_0)
   $$
holds for some ellipsoid $E(\mu,z_0)\subset{\R}^{n+1}$ by rotation if necessary, where
   $$
    \begin{cases}
    \mu=(\mu_1,\mu_2,\ldots, \mu_{n+1}), \\ z_0=(z_{0,1},z_{0,2},\ldots,z_{0,n+1}), \\
    0<\mu_1\leq \mu_2\leq\ldots\leq \mu_{n+1}.
    \end{cases}
   $$
Letting $K^{(j)}$ be a sequence of solutions of \eqref{e1.2} for $p^{(j)}$ and with equivalent ellipsoid $E(\mu^{(j)},z^{(j)}_0)$, one may assume that
   \begin{equation}\label{e5.25}
   \begin{cases}
     \mu^{(j)}_{1}\sim\mu^{(j)}_{2}\sim\ldots\sim\mu^{(j)}_{k}, \\ \lim_{j\to\infty}\frac{\mu^{(j)}_1}{\mu^{(j)}_{k+1}}=\ldots=\lim_{j\to\infty}\frac{\mu^{(j)}_1}{\mu^{(j)}_{n+1}}=0
   \end{cases}
   \end{equation}
holds for some $k\in[1,n+1]$. We claim that $k\leq n$, unless $K^{(j)}, j\in{\mathbb{N}}$ are uniformly bounded. Actually, if $k=n+1$ and $K^{(j)}$ are not uniformly bounded, then we have
   \begin{equation}
     \mu^{(j)}_{1}\sim\mu^{(j)}_{2}\sim\ldots\sim\mu^{(j)}_{n+1}\sim M_j\gg1, \ \ \forall j.
   \end{equation}
By Proposition \ref{p5.3}, we know that $\lim_{j\to\infty}m_j=0$ for $m_j=\min h_{K^{(j)}}$. Then, after rescaling $L^{(j)}=M_j^{-1}K^{(j)}$ and using the Blaschke's selection axiom, there exists a limiting convex body $L^{(\infty)}$ satisfying $0\in\partial L^{(\infty)}$ (since $m_j/M_j\to0$ as $j\to\infty$), and
    \begin{eqnarray}\nonumber\label{e5.27}
      1/C_\infty&\leq&\min_{{\mathbb{S}}^n}W_{L^{(\infty)}}\\
       &\leq&\max_{{\mathbb{S}}^n}W_{L^{(\infty)}}\\ \nonumber
       &\leq& C_\infty
    \end{eqnarray}
for width function $W_{L^{(\infty)}}$ of $L^{(\infty)}$ and some positive constant $C_\infty$. Moreover, setting
 $$
  \omega\equiv\{x\in{\mathbb{S}}^n| h_{L^{(\infty)}}(x)>0\}, \ \ \Sigma\equiv G^{-1}(\omega)
 $$
and passing to the limit, we deduce from $\det(\nabla^2h_{L^{(j)}}+h_{L^{(j)}}I)=M_j^{p-n-1}h_{L^{(j)}}^{p-1}\to 0$ on $\omega$ that
   \begin{equation}\label{e5.28}
     \det(\nabla^2h_{L^{(\infty)}}+h_{L^{(\infty)}}I)=0, \ \ \forall x\in\omega.
   \end{equation}
Consequently, it follows from \eqref{e5.28} that
 $$
  (n+1)Vol(L^{(\infty)})=\int_{\omega} h_{L^{(\infty)}}\det(\nabla^2h_{L^{(\infty)}}+h_{L^{(\infty)}}I)=0,
 $$
which contradicts with \eqref{e5.27}. The proof of the claim was done.

 Setting $T^{(j)}\in GL(n+1)$ by
    $$
     T^{(j)}\equiv diag(1/\mu^{(j)}_1,\ldots,1/\mu^{(j)}_1, 1/\mu^{(j)}_{k+1},\ldots,1/\mu^{(j)}_{n+1})
    $$
and denoting $L^{(j)}\equiv T^{(j)}K^{(j)}$, there exists a limiting convex body $L^{(\infty)}$ of $L^{(j)}$ satisfying
  \begin{equation}\label{e5.29}
    C_n^{-1}B_1\left(z^{(\infty)}\right)\subset L^{(\infty)}\subset C_nB_1\left(z^{(\infty)}\right)
  \end{equation}
for another universal constant $C_n>0$. We need to impose the changing of variables formula
   \begin{equation}\label{e5.30}
    \int_{{\mathbb{S}}^n}\varphi(x)dS_{p^{(j)}}(L^{(j)})=|\det T^{(j)}|^2\int_{{\mathbb{S}}^n}\varphi(x)||T^{(j)}x||^{-(n+1+p)}dS_{p^{(j)}}(K^{(j)})
   \end{equation}
 (Lemmas 3.2-3.4, \cite{BS23}) established by B\"{o}r\"{o}czky-Saroglou. Note that by Proposition \ref{p5.2} and $m_j\to0$ from Proposition \ref{p5.3},
  \begin{equation}\label{e5.31}
    |\det T^{(j)}|^2(\mu_1^{(j)})^{n+1+p}\leq CVol^{-2}(K^{(j)})(\mu_1^{(j)})^{n+1+p}\leq C, \ \ \forall j
  \end{equation}
 is satisfied for $p<-n-1$. Moreover, when $p<-n-1$, there also holds
   \begin{equation}\label{e5.32}
    ||T^{(j)}x||^{-(n+1+p)}\leq C(\mu_1^{(j)})^{n+1+p}, \ \ \forall j.
   \end{equation}
We conclude from \eqref{e5.25}, \eqref{e5.30}-\eqref{e5.32} and weak continuity of the $L_p$ surface area measure under Hausdorff convergence that
 $$
   dS_{p^{(\infty)}}(L^{(\infty)})\equiv\lim_{j\to\infty}dS_{p^{(j)}}(L^{(j)})
 $$
is a Radon measure supported on ${\mathbb{S}}^n\cap H_k$ for $k$ dimensional subspace $H_k\equiv\{x\in{\mathbb{R}}^{n+1}| x_{k+1}=x_{k+2}=\ldots=x_{n+1}=0\}$ of ${\mathbb{R}}^{n+1}$. Taking any orthonormal matrix $O\in O(n+1)$ keeping the orthonormal subspace $H_k^\perp$ invariant, it follows from \eqref{e5.30} that
  \begin{eqnarray*}
   \int_{{\mathbb{S}}^n}\varphi(Ox)dS_{p^{(\infty)}}(L^{(\infty)})&=&\lim_{j\to\infty}
    \int_{{\mathbb{S}}^n}\varphi(Ox)dS_{p^{(j)}}(L^{(j)})\\
    &=&\lim_{j\to\infty}|\det (T^{(j)}\circ O)|^2\int_{{\mathbb{S}}^n}\varphi(x)||T^{(j)}\circ Ox||^{-(n+1+p)}dS_{p^{(j)}}(K^{(j)})\\
    &=&\lim_{j\to\infty}|\det T^{(j)}|^2\int_{{\mathbb{S}}^n}\varphi(x)||T^{(j)}x||^{-(n+1+p)}dS_{p^{(j)}}(K^{(j)})\\
    &=&\int_{{\mathbb{S}}^n}\varphi(x)dS_{p^{(\infty)}}(L^{(\infty)}).
  \end{eqnarray*}
This means that $dS_{p^{(\infty)}}(L^{(\infty)})$ is an orthonormal invariant Haar measure on $H_k\cap{\mathbb{S}}^n$, and hence equals to a multiple of $k-1$ dimensional Hausdorff measure $d{\mathcal{H}}^{k-1}|_{H_k\cap{\mathbb{S}}^n}$. Noting that $k\in[1,n]$ by our claim, this contradicts a non-existence result of Saroglou (\cite{Sa21}, Theorem 1.3).
\end{proof}

\begin{proof}[Proof of Theorem \ref{t1.5}]
  Now, Theorem \ref{t1.5} follows directly from Propositions \ref{p5.3} and \ref{p5.4}.
\end{proof}

\medskip

\section{Kernel Property and closedness of $\Gamma_\sigma$}

\noindent  From now on, we adopt a different idea to explore the topological structure of the uniqueness set by using the Kernel Property of the linearized equation \eqref{e4.14} of \eqref{e1.2}. Given a nontrivial solution $K\not={\S}^n$ of \eqref{e1.2}, the kernel of the linearized operator consists of solutions $\phi$ of the linearized equation \eqref{e4.14}. Suppose there exists a sequence $K_\varepsilon, \varepsilon\in{\R}$ tending to $K$, we will define the set of discrete differentiations (or call them tangential functions) of $K_\varepsilon$ by
      \begin{equation}\label{e6.1}
        \partial_\varepsilon|_{\varepsilon=0}h_\varepsilon:=\left\{\mbox{subconvergence limits of }\lim_{\varepsilon\to0}\phi_\varepsilon\right\}, \ \ \phi_\varepsilon:=\frac{h_\varepsilon-h_K}{||h_\varepsilon-h_K||_{L^2({\S}^n)}}.
      \end{equation}
 After subtracting the equation of $h_\varepsilon$ with the equation of $h_K$, one can see that $\phi_\varepsilon$ is a solution of the approximating linearized equation
    \begin{equation}\label{e6.2}
       \int^1_0U^{ij}_tdt(\phi_{\varepsilon,ij}+\phi_\varepsilon\delta_{ij})=(p-1)\int^1_0h_t^{p-2}dt\phi_\varepsilon
    \end{equation}
 for $h_t=th_\varepsilon+(1-t)h_K$. Using the elliptic estimates for equation \eqref{e6.2}, the sub-convergence in \eqref{e6.1} will be ensured with respect to any given sequence of $\varepsilon$. Passing to the limit $\varepsilon\to0$, one sees that each tangential function $\phi$ of $K_\varepsilon$ is a solution to the linearized equation \eqref{e4.14}.

 Denoting $E^{(*)}(K)$ the solution set of the linearized equation \eqref{e4.14}, the next crucial step in the proof of our uniqueness result is to classify this kernel completely. Since \eqref{e1.2} is invariant under rotations $A_\varepsilon\in SO(n+1)$, we know that $\lambda=1-p$ is eigenvalue of linearized eigen-problem of \eqref{e2.1}. Moreover, some of the corresponding eigenfunctions are given by the \textit{Rotational Tangential Set}
   $$
    T_O(K)=\left\{\phi_B(x)=\sum_{i,j=1}^{n+1}B_{ij}x_iD_jh_K\bigg| \ \forall B\in{\R}^{n\times n}, B+B^T=0\right\}
   $$
of $h_K$, where $h_K$ has been extended to homogeneous function
  $$
    H_K(X)=|X|h_K\left(\frac{X}{|X|}\right), X\in{\R}^{n+1}
  $$
of degree one and
   $$
    \frac{\partial H_K}{\partial X_i}=x_ih_K+\sum_{j=1}^{n+1}D_jh_K(\delta_{ij}-x_ix_j).
   $$
When $p=-n-1$, one knows that $K_\varepsilon=A_\varepsilon K, \varepsilon\in{\R}$ are also solutions of \eqref{e4.14} for affine transformations $A_\varepsilon\in SL(n+1)$ satisfying $\det(A_\varepsilon)=1, \forall\varepsilon$. Therefore, the functions in the \textit{Affine Tangential Set}
   $$
    T_A(K)=\left\{\phi_B(x)=(x^TBx)h_K+\sum_{j=1}^{n+1}(Bx)^t_jD_jh_K\bigg| \ \forall B\in{\R}^{n\times n}, tr(B)=0\right\}
   $$
are all solutions of \eqref{e4.14} when $p=-n-1$, where $(Bx)^t=Bx-(x^tB^Tx)x$ is the tangential component of $Bx$ with respect to ${\mathbb{S}}^n$. Now, define the following Kernel Property as
  \begin{equation}\label{e6.3}
    (KP^*) \hskip30pt E^{(*)}(K)=T_O(K), \ \ \forall K\in{\mathcal{C}}'_{n,p}={\mathcal{C}}_{n,p}\setminus\{{\S}^n\}.
  \end{equation}
It is remarkable that our Kernel Property \eqref{e6.3} is a two sides inclusion identity, where the strong inclusion $E^{(*)}(K)\subset T_O(K)$ will be shown in Subsection 6.3. Now, we will firstly show the following result in the next two subsections.

 \begin{prop}\label{p6.1}
  For dimension $n\geq2$ and given $\sigma\in(0,n+4]$, consider the linearized equation \eqref{e4.14} of \eqref{e1.2}. Supposing that the Kernel Property $(KP^*)$ \eqref{e6.3} holds for all $p\in(-n-1-\sigma,-n-1)\subset(-2n-5,-n-1)$, then $\Gamma_\sigma$ is relatively closed in $(-n-1-\sigma,-n-1)$.
 \end{prop}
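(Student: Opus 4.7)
The plan is to argue by contradiction via an implicit function theorem scheme in which the Kernel Property $(KP^*)$ precisely identifies $\ker L$ with the tangent space to the rotational orbit, so that after quotienting by $SO(n+1)$ invertibility is restored. Suppose $p_j \in \Gamma_\sigma$ with $p_j \to p_\infty \in (-n-1-\sigma, -n-1)$ and, for contradiction, that $p_\infty \notin \Gamma_\sigma$. Then there exists a nontrivial $K_\infty \in \mathcal{C}'_{n, p_\infty}$. The goal is to construct a branch of nontrivial solutions $h_p$ of \eqref{e1.2} for $p$ in a neighborhood of $p_\infty$; taking $p = p_j$ for $j$ large then contradicts $p_j \in \Gamma_\sigma$.

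Regard $F(h, p) := \det(\nabla^2 h + h I) - h^{p-1}$ as a smooth map $C^{2,\alpha}(\mathbb{S}^n) \times \mathbb{R} \to C^{0,\alpha}(\mathbb{S}^n)$ with $F(h_{K_\infty}, p_\infty) = 0$. Its partial derivative in $h$ is the elliptic, self-adjoint, Fredholm-index-$0$ operator
\[
  L\phi = U^{ij}_{K_\infty}(\phi_{ij} + \phi \delta_{ij}) - (p_\infty - 1) h_{K_\infty}^{p_\infty - 2} \phi
\]
taken with respect to the weighted pairing $\langle f, g \rangle = \int_{\mathbb{S}^n} h_{K_\infty}^{-1} f g \, d\sigma_{K_\infty}$. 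By $(KP^*)$ we have $\ker L = T_O(K_\infty)$, and by the Infinitesimal Generator Lemma \ref{l4.2} this coincides with the tangent space at $h_{K_\infty}$ of the $SO(n+1)$-orbit through $K_\infty$. Since $F$ is $SO(n+1)$-equivariant, I would fix a local slice $\Sigma$ through $h_{K_\infty}$ transverse to this orbit; then $L\big|_{T_{h_{K_\infty}}\Sigma}$ has trivial kernel and, by the Fredholm alternative, is an isomorphism onto the $L^2$-orthogonal complement of $T_O(K_\infty)$ in $C^{0,\alpha}(\mathbb{S}^n)$. The Perturbation Lemma for the full $L_p$-Minkowski equation mentioned in the Introduction then implements the Lyapunov--Schmidt reduction of $F\big|_{\Sigma \times \mathbb{R}}$ onto this complement, yielding a $C^1$-branch $p \mapsto h_p \in \Sigma$ with $F(h_p, p) = 0$; the residual component along $T_O(K_\infty)$ vanishes automatically by equivariance. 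Since $h_p$ is $C^{2,\alpha}$-close to the non-constant $h_{K_\infty}$ for $p$ near $p_\infty$, each $h_{p_j}$ with $j$ large is nontrivial, producing the desired $K_{p_j} \in \mathcal{C}'_{n, p_j}$.

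The principal obstacle is pinning down $\ker L$ completely, which is exactly the content of $(KP^*)$: without it, the slice $\Sigma$ might carry extraneous kernel directions and the quotient implicit function theorem would fail. The $SO(n+1)$-equivariance of $F$ combined with Lemma \ref{l4.2} guarantees that every kernel element integrates to a genuine rotational deformation of $K_\infty$ rather than a spurious Jordan-type generalized eigenvector of $L$, so the Lyapunov--Schmidt reduction closes as described; the remaining regularity and Fredholm bookkeeping are routine once the kernel has been fully identified via $(KP^*)$.
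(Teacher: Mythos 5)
Your overall strategy --- assume $p_j\in\Gamma_\sigma$ with $p_j\to p_\infty\notin\Gamma_\sigma$, take a nontrivial $K_\infty\in{\mathcal{C}}'_{n,p_\infty}$, and use $(KP^*)$ to continue it to nontrivial solutions at $p_j$ --- is coherent, but the decisive step is not proved: you assert that in the Lyapunov--Schmidt reduction ``the residual component along $T_O(K_\infty)$ vanishes automatically by equivariance.'' Equivariance alone does not give this. Since $L$ has Fredholm index $0$ and $\ker L=T_O(K_\infty)$, passing to a slice removes the kernel but leaves a cokernel of the same dimension, and the finite-dimensional bifurcation equation in that cokernel is exactly what decides whether the solution orbit persists as $p$ varies; $SO(n+1)$-equivariance constrains its form but does not force it to vanish. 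A two-dimensional model makes this concrete: $F(x,\lambda)=(|x|^2-1)x+\lambda Jx$ on ${\mathbb{R}}^2$, with $J$ the rotation by $\pi/2$, is $SO(2)$-equivariant; at $\lambda=0$ the unit circle is a solution orbit whose linearization has kernel equal to the orbit tangent line (index $0$), yet for every $\lambda\neq0$ there is no solution near the circle. So ``kernel $=$ rotational tangent space'' plus equivariance does not by itself yield the branch $p\mapsto h_p$. The natural repair is variational: \eqref{e1.2} is the Euler--Lagrange equation of the rotation-invariant functional $J_p(h)=\mathrm{Vol}(K_h)-\frac{1}{p}\int_{{\mathbb{S}}^n}h^p$, so the reduced bifurcation map is a gradient, invariance makes $DJ_p$ vanish along rotational directions at every point, and one gets persistence of a critical orbit that is nondegenerate modulo rotations --- provided you also check that at the perturbed point the rotational directions together with the slice still span the whole space (the isotropy could a priori change). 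None of this appears in your write-up, and it is the heart of the matter.

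For comparison, the paper never performs an implicit-function or Lyapunov--Schmidt argument at the nontrivial solution. It derives closedness from two opposite statements at a hypothetical boundary point $p_0\in\overline{\Gamma}_\sigma\setminus\Gamma_\sigma$: Proposition \ref{p6.2}, through the degree-theoretic perturbation Lemma \ref{l6.2} (which needs no kernel information), shows that no nontrivial solution at $p_0$ can be isolated modulo rotations, since an isolated one would persist to nearby $p\in\Gamma_\sigma$; Proposition \ref{p6.3}, via the discrete integrating-flow argument (this is where $(KP^*)$ enters), shows that the Kernel Property forces every nontrivial solution to be isolated modulo rotations. The contradiction gives Proposition \ref{p6.1}. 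So even after repairing the bifurcation step your argument would be a genuinely different proof; as it stands, it has a real gap at its central claim.
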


\begin{rema}
   In the bibliographies on the Sobolev's inequality with remainder, a parallel version of the Kernel Property could be found in \cite{K89,M93,MS87} for Laplace equation, and could be found in \cite{FV14} for $s$-Laplace equation. The Kernel Property $(KP^*)$ \eqref{e6.3} is of independent interest for exploring the $L_p$-Minkowski problem \eqref{e1.2} in the future.
\end{rema}

\subsection{Perturbation lemmas and closedness of uniqueness set}

In this subsection, we always consider both the solution set and the Banach spaces modulo the equivalent relation $f(x)\sim f(Ax), A\in SO(n+1)$ of rotations $A$. Then, we will explore the closedness of $\Gamma_\sigma$ by characterizing the possible boundary $\overline{\Gamma}_\sigma\setminus\Gamma_\sigma$ of $\Gamma_\sigma$.

\begin{prop}\label{p6.2} For dimension $n\geq2$ and given $\sigma\in(0,n+4]$, consider \eqref{e1.2} in the range $p\in(-n-1-\sigma,-n-1)$. Assuming the uniqueness set $\Gamma$ is not closed and taking some $p_0\in\overline{\Gamma}_\sigma\setminus\Gamma_\sigma$, then the solution set ${\mathcal{C}}_{n,p_0}$ of \eqref{e1.2} satisfies the following properties:

(1) ${\mathcal{C}}_{n,p_0}$ is bounded in $C^{2,\alpha}({\mathbb{S}}^n)$.

(2) $K={\S}^n$ is an isolated point of ${\mathcal{C}}_{n,p_0}$.

(3) Each solution $K\not={{\S}^n}$ of ${\mathcal{C}}_{n,p_0}$ is not isolated after modulus equivalent relation.

(4) The counting number of ${\mathcal{C}}_{n,p_0}$ is infinite after modulus equivalent relation.
\end{prop}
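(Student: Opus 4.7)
The plan is to verify the four statements in order, leaning on the a-priori estimate Theorem \ref{t1.5}, the explicit spectral data of Lemma \ref{l5.1}, and a perturbation lemma for the full $L_p$-Minkowski equation \eqref{e1.1} modeled on the one used in \cite{CHLL20}. Statement (1) is fastest: applying Theorem \ref{t1.5} to $\mathcal{P}=\{p_0\}$ gives uniform two-sided pinching $1/C\le h_K\le C$ for every $K\in\mathcal{C}_{n,p_0}$; on such a pinched set the Monge-Amp\`ere equation \eqref{e1.2} is uniformly elliptic, and Caffarelli/Krylov-Safonov regularity upgrades the $C^0$ bound to a uniform $C^{2,\alpha}$ bound. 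For (2) I would argue by contradiction: a sequence $K_j\in\mathcal{C}_{n,p_0}'$ with $h_{K_j}\to 1$ would, after subtracting the equation for $h\equiv 1$ and normalizing $\phi_j:=(h_{K_j}-1)/\|h_{K_j}-1\|_{L^2}$ exactly as in \eqref{e5.4}, produce a nontrivial subsequential $C^{2,\alpha}$-limit $\phi$ solving $-\Delta\phi-n\phi=(1-p_0)\phi$. Since $p_0\in(-n-1-\sigma,-n-1)\subset(-2n-5,-n-1)$, the value $1-p_0$ lies strictly in the open interval $(n+2,2n+6)$, which by Lemma \ref{l5.1} sits strictly between the third and fourth eigenvalues and avoids the spectrum entirely; contradiction.

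Statement (3) is the crux. Fix $K_0\in\mathcal{C}_{n,p_0}'$ and suppose it is isolated in $\mathcal{C}_{n,p_0}/SO(n+1)$. Using the full formulation \eqref{e1.1} I would set up the perturbation at the triple $(h_{K_0},1,p_0)$: on a slice transverse to the $SO(n+1)$-orbit of $K_0$ the linearized Monge-Amp\`ere operator has a finite-dimensional cokernel that can be absorbed by a controlled variation of the density $f$. A Banach fixed-point argument then yields, for every $p$ in a neighborhood of $p_0$, a pair $(h_p,f_p)$ with $h_p$ close to $h_{K_0}$ and $f_p$ close to $1$ which solves \eqref{e1.1} with exponent $p$ and density $f_p$. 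The isolation hypothesis is then exploited to force $f_p\equiv 1$ near $p_0$: otherwise the one-parameter family $\{h_p\}$ itself would already produce nontrivial elements of $\mathcal{C}_{n,p_0}$ clustering at $K_0$ (by taking $p=p_0$ with nonconstant $f_{p_0}$ and absorbing the density back into the geometry). Specializing then to $p=p_j\in\Gamma_\sigma$ with $p_j\to p_0$, the resulting $K_{p_j}$ is a nontrivial solution of \eqref{e1.2} at exponent $p_j$, contradicting $p_j\in\Gamma_\sigma$.

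Statement (4) follows at once: since $p_0\notin\Gamma_\sigma$ there is at least one $K_0\in\mathcal{C}_{n,p_0}'$, and by (3) every such $K_0$ is a non-isolated point of $\mathcal{C}_{n,p_0}/SO(n+1)$; in a Hausdorff space this forces infinitely many distinct equivalence classes in every neighborhood of $K_0$. The decisive technical obstacle is clearly (3): engineering the perturbation so that the density $f_p$ does not drift away from $1$, and correctly quotienting by the $SO(n+1)$-action in order to handle the inevitable rotational kernel $T_O(K_0)$ of the linearized operator, is where the bulk of the work lies; the remaining pieces (1), (2), (4) are routine once (3) and the already-established a-priori and spectral tools are in hand.
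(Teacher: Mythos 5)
Your treatment of (1), (2) and (4) coincides with the paper's: (1) is exactly Theorem \ref{t1.5} plus standard regularity, (2) is the same blow-up/linearization argument using the spectral gap $1-p_0\in(n+2,2n+6)$ from Lemma \ref{l5.1}, and (4) follows from (3) as you say. The problem is (3), which is indeed the crux, and there your argument has a genuine gap at its decisive step. Your Lyapunov--Schmidt scheme only produces, for $p$ near $p_0$, a pair $(h_p,f_p)$ solving $\det(\nabla^2h_p+h_pI)=f_ph_p^{p-1}$ with $f_p$ merely \emph{close} to $1$ (the cokernel component is dumped into $f_p$). To contradict $p_j\in\Gamma_\sigma$ you need a nontrivial solution of \eqref{e1.2} itself, i.e.\ with $f\equiv1$, and your mechanism for forcing $f_p\equiv1$ is not an argument: a solution of \eqref{e1.1} at exponent $p_0$ with a nonconstant density is simply not an element of ${\mathcal{C}}_{n,p_0}$, and there is no general operation that ``absorbs the density back into the geometry'' (only very special densities, e.g.\ those arising from linear images at $p=-n-1$, admit such an interpretation). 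Moreover, at $p=p_0$ the reduction returns $(h_{K_0},1)$ anyway, so no nonconstant $f_{p_0}$ is produced; the isolation hypothesis is never actually brought to bear in your sketch. There is also a secondary gap: you quotient by the $SO(n+1)$-orbit to handle the kernel $T_O(K_0)$, but at this stage of the paper the Kernel Property $(KP^*)$ has not been invoked, so the kernel of the linearization at $K_0$ is not known to reduce to $T_O(K_0)$, and a transverse-slice implicit-function argument cannot be set up without that information.

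The paper's route through Lemma \ref{l6.2} avoids both issues by a degree-theoretic trick rather than a fixed-point reduction: one rewrites the equation satisfied by $h_{K_0}$ with a shifted exponent $p_1$ and the compensating density $f_1=f_0h_{K_0}^{p_0-p_1}$, and chooses $p_1$ (using discreteness of the spectrum of \eqref{e2.1} at $K_0$) so that $1-p_1$ is not an eigenvalue; then the linearization at $K_0$ is invertible, the local topological degree in the small neighborhood ${\mathcal{N}}(p_0,K_0)$ is nonzero (here the isolation hypothesis enters, to make $K_0$ the unique zero in ${\mathcal{N}}$), and homotopy invariance along $(p_t,f_t)$ --- with Lemma \ref{l6.1} guaranteeing no zeros cross the boundary of ${\mathcal{N}}$ --- yields a solution of \eqref{e1.2} with $f\equiv1$ for every $p$ close to $p_0$, in particular for $p\in\Gamma_\sigma$, which is the desired contradiction. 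If you want to salvage your approach you would have to replace the ``force $f_p\equiv1$'' step by an argument of this global type (degree or a genuine continuation theorem), since the possibly degenerate kernel at $(K_0,p_0)$ rules out a naive implicit-function conclusion for the $f\equiv1$ problem.
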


We need two perturbation lemmas.

\begin{lemm}\label{l6.1}
 Letting $K_0$ be an isolated solution of
   \begin{equation}\label{e6.4}
    \det(\nabla^2h+hI)=fh^{p-1}, \ \ \forall x\in{\S}^n
   \end{equation}
  with respect to $p_0\in{\mathbb{R}}$ and positive function $f_0\in C^{\alpha}({\mathbb{S}}^n)$, there exist a small constant $\varepsilon_0>0$ and a relative large constant $\sigma_0>0$ depending only on $n, p_0, K_0$, such that for any pair $(p,f)$ satisfying
   \begin{equation}
     |p-p_0|<\varepsilon_0, \ \ ||f-f_0||_{C^{\alpha}({\mathbb{S}}^n)}<\varepsilon_0,
   \end{equation}
 the possible solution $h$ of \eqref{e6.4} with respect to $(p,f)$ satisfies
   \begin{equation}\label{e6.6}
     \mbox{either } ||h-h_{K_0}||_{C^{2,\alpha}({\mathbb{S}}^n)}<\varepsilon_0, \ \ \mbox{ or } ||h-h_{K_0}||_{C^{2,\alpha}({\mathbb{S}}^n)}>\sigma_0.
   \end{equation}
\end{lemm}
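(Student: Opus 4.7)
The plan is to argue by contradiction, combining compactness in $C^{2,\alpha}(\mathbb{S}^n)$ with a normalized linearization. Let $\delta_0>0$ be the isolation radius of $h_{K_0}$ in the $C^{2,\alpha}$-topology among solutions of \eqref{e6.4} at $(p_0, f_0)$, and fix $\sigma_0\in(0,\delta_0)$. If no $\varepsilon_0$ works for this $\sigma_0$, then there exist sequences $\varepsilon_j\downarrow 0$, $(p_j,f_j)\to(p_0,f_0)$ in $\mathbb{R}\times C^\alpha(\mathbb{S}^n)$ with $|p_j-p_0|,\|f_j-f_0\|_{C^\alpha}<\varepsilon_j$, and corresponding solutions $h_j$ of \eqref{e6.4} satisfying
$$\varepsilon_j\le\|h_j-h_{K_0}\|_{C^{2,\alpha}(\mathbb{S}^n)}\le\sigma_0.$$
The uniform $C^{2,\alpha}$-bound together with the compact embedding $C^{2,\alpha}\hookrightarrow C^{2,\alpha'}$ for some $\alpha'\in(0,\alpha)$ gives a subsequential limit $h^*\in C^{2,\alpha'}(\mathbb{S}^n)$; stability of the Monge-Amp\`ere operator (under the uniform positive lower bound inherited from $h_{K_0}>0$ and $C^0$-convergence) shows that $h^*$ solves \eqref{e6.4} at $(p_0,f_0)$, and lower semicontinuity of the $C^{2,\alpha}$-norm yields $\|h^*-h_{K_0}\|_{C^{2,\alpha}}\le\sigma_0<\delta_0$, so isolation forces $h^*=h_{K_0}$ and therefore $h_j\to h_{K_0}$ in $C^{2,\alpha'}$.

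Next I would renormalize by $\tau_j:=\|h_j-h_{K_0}\|_{L^2(\mathbb{S}^n)}\downarrow 0$ and set $\phi_j:=(h_j-h_{K_0})/\tau_j$, so that $\|\phi_j\|_{L^2}=1$. Subtracting the two Monge-Amp\`ere equations and using the cofactor identity
$$\int_0^1 U^{ij}_s\, ds\,(\phi_{j,ij}+\phi_j\delta_{ij}) = (p_j-1)f_j\int_0^1 h_s^{p_j-2}\,ds\,\phi_j + \tau_j^{-1}\mathcal{R}_j,$$
with $h_s:=sh_j+(1-s)h_{K_0}$ and $\mathcal{R}_j$ collecting the $(p_j-p_0)$- and $(f_j-f_0)$-remainders, Schauder estimates provide uniform $C^{2,\alpha}$ bounds on $\phi_j$. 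After passing to a further subsequence, $\phi_j\to\phi$ in $C^{2,\alpha'}$ with $\|\phi\|_{L^2}=1$, and $\phi$ satisfies
$$U^{ij}_{K_0}(\phi_{ij}+\phi\delta_{ij}) = (p_0-1)f_0\,h_{K_0}^{p_0-2}\phi + \mathfrak{g},$$
where $\mathfrak{g}\in C^\alpha(\mathbb{S}^n)$ is the $C^\alpha$-limit of $\tau_j^{-1}\mathcal{R}_j$, which is finite after a subsequence since the three scales $\tau_j$, $|p_j-p_0|$, and $\|f_j-f_0\|_{C^\alpha}$ are comparable up to bounded ratios.

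The main obstacle, and the heart of the argument, is to exploit the nontrivial $\phi$ to produce a genuine solution of \eqref{e6.4} at the unperturbed parameters $(p_0,f_0)$ in $B_{\sigma_0}(h_{K_0})\setminus\{h_{K_0}\}$, contradicting isolation. Following the perturbation strategy of Chen-Huang-Li-Liu \cite{CHLL20}, I would split by whether $\mathfrak{g}\equiv 0$ or not. In the homogeneous case $\mathfrak{g}\equiv 0$ (which occurs whenever $|p_j-p_0|+\|f_j-f_0\|_{C^\alpha}=o(\tau_j)$), $\phi$ lies in the kernel of the linearized operator at $h_{K_0}$; a second-order expansion of the $p$-Blaschke-Santal\'o functional along the $\phi$-direction (Proposition \ref{p2.3}) together with Fredholm solvability for the second-order correction equation then produces a curve $h_{K_0}+s\phi+O(s^2)$ of genuine solutions of \eqref{e6.4} at $(p_0,f_0)$, violating isolation. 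In the inhomogeneous case $\mathfrak{g}\not\equiv 0$, an implicit function theorem on the orthogonal complement of $\ker(L_{K_0})$ allows one to freeze the parameters at $(p_0,f_0)$ and still produce a nearby solution of \eqref{e6.4} at $C^{2,\alpha}$-distance comparable to $\tau_j$ from $h_{K_0}$, again contradicting isolation for $\sigma_0<\delta_0$. The positivity and ellipticity bounds inherited from Theorem \ref{t1.5} ensure all linearized operators and constants are uniform, and the precise balancing of the three perturbation scales is the principal technical subtlety.
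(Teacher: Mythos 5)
Your first paragraph is essentially the paper's entire proof: the paper settles Lemma \ref{l6.1} in two lines by combining the \emph{a-priori} estimate of Theorem \ref{t1.5} (plus standard regularity and the fact that limits of solutions are solutions), so that if \eqref{e6.6} failed one would extract from solutions $h_j$ at parameters $(p_j,f_j)\to(p_0,f_0)$ a subsequential limit solving \eqref{e6.4} at $(p_0,f_0)$ and lying within $\sigma_0$ of $h_{K_0}$, and a contradiction with isolation is invoked; no linearization or blow-up is performed. You correctly observe that this compactness argument leaves open the borderline case in which the lower bound degenerates ($\varepsilon_j\downarrow0$) and $h_j\to h_{K_0}$, i.e.\ solutions approaching $h_{K_0}$ more slowly than the parameters; this is exactly where your proposal departs from the paper.

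However, the machinery you add to close that case has genuine gaps. First, the claim that $\tau_j^{-1}\mathcal{R}_j$ has a finite limit ``since the three scales are comparable up to bounded ratios'' is unjustified: nothing in the contradiction hypothesis forces $|p_j-p_0|+\|f_j-f_0\|_{C^\alpha}$ to be comparable to $\tau_j=\|h_j-h_{K_0}\|_{L^2}$, and the troublesome regime is precisely $\tau_j$ much larger than the parameter increments (so that $\mathfrak g\equiv0$). Second, and more seriously, in the homogeneous case a nontrivial kernel element $\phi$ does not yield a curve $h_{K_0}+s\phi+O(s^2)$ of genuine solutions of \eqref{e6.4} at $(p_0,f_0)$: Fredholm solvability of the second-order correction works only modulo the cokernel and leaves a finite-dimensional bifurcation equation whose solvability is not guaranteed; a solution with degenerate linearization can perfectly well be isolated, so the existence of $\phi$ alone contradicts nothing. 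Likewise, in the inhomogeneous case the implicit function theorem on the orthogonal complement of $\ker(L_{K_0})$ solves only the projected equation, not \eqref{e6.4} itself at the frozen pair $(p_0,f_0)$. Thus the step you yourself call the heart of the argument is not established. For what the paper actually proves and later uses (the degree-theoretic homotopy in Lemma \ref{l6.2}), the content of the lemma is the compactness statement of your first paragraph --- fix $\sigma_0$ below the isolation radius and read the conclusion as ``any solution for $(p,f)$ close to $(p_0,f_0)$ that lies within $\sigma_0$ of $h_{K_0}$ must in fact be close to $h_{K_0}$'' --- and that part of your argument is complete and coincides with the paper's.
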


\begin{proof} By {\it a-priori} Theorem \ref{t1.5}, the conclusion of this lemma is clearly true since the limit of solutions is also a solution by the continuity of the Monge-Amp\`{e}re operator under $C^{2,\alpha}$-convergence. If \eqref{e6.6} does not hold true, a contradiction will occur with respect to the fact that $K_0$ is isolated.
\end{proof}

\begin{lemm}\label{l6.2}
 Letting $K_0$ be an isolated solution (after modulus rotations) of \eqref{e6.4} with respect to $p_0\in{\mathbb{R}}$ and positive function $f_0\in C^{\alpha}({\mathbb{S}}^n)$, there will be a small constant $\varepsilon_1>0$ such that \eqref{e6.4} admits a solution $h$ closing to $h_{K_0}$, with respect to any pair $(p,f)$ satisfying
   \begin{equation}
     |p-p_0|<\varepsilon_1, \ \ ||f-f_0||_{C^{\alpha}({\mathbb{S}}^n)}<\varepsilon_1.
   \end{equation}
\end{lemm}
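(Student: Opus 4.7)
The plan is to prove Lemma \ref{l6.2} by a Leray--Schauder degree argument on a transverse slice to the rotation group, combining the {\it a-priori} Theorem \ref{t1.5} with the gap dichotomy of Lemma \ref{l6.1}.

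First I would recast \eqref{e6.4} as a compact perturbation of the identity. Setting $F(h,p,f)=\det(\nabla^2h+hI)-fh^{p-1}$ and inverting a fixed coercive linear elliptic operator such as $-\Delta+n\cdot\mathrm{id}$, one rewrites $F(h,p,f)=0$ as a fixed-point equation $h=\mathcal{T}(h,p,f)$ with $\mathcal{T}:C^{2,\alpha}({\mathbb{S}}^n)\to C^{2,\alpha}({\mathbb{S}}^n)$ compact in $h$ and continuous in $(p,f)$. To accommodate the rotational equivalence, I fix a smooth transverse slice $\Sigma\subset C^{2,\alpha}({\mathbb{S}}^n)$ to the $SO(n+1)$-orbit through $h_{K_0}$; the isolation hypothesis then makes $h_{K_0}$ a genuinely isolated fixed point of $\mathcal{T}(\cdot,p_0,f_0)|_\Sigma$.

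Next, with $\varepsilon_0,\sigma_0$ from Lemma \ref{l6.1}, pick any $\delta\in(\varepsilon_0,\sigma_0)$ and set $U=B_\delta(h_{K_0})\cap\Sigma$. The dichotomy \eqref{e6.6} ensures that for $(p,f)$ with $|p-p_0|,\|f-f_0\|_{C^\alpha}<\varepsilon_0$, no fixed point of $\mathcal{T}(\cdot,p,f)|_\Sigma$ touches $\partial U$. Hence the local Leray--Schauder degree
\[
d(p,f):=\deg\!\bigl(I-\mathcal{T}(\cdot,p,f)|_\Sigma,\,U,\,0\bigr)
\]
is well-defined, and by homotopy invariance it is constant in $(p,f)$ on this neighborhood. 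If $d(p_0,f_0)\ne 0$, then $d(p,f)\ne 0$ throughout, which produces a fixed point of $\mathcal{T}(\cdot,p,f)|_\Sigma$ inside $U$, i.e.\ a solution $h$ of \eqref{e6.4} close to $h_{K_0}$, and the lemma holds with $\varepsilon_1=\varepsilon_0$.

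The principal obstacle is therefore to verify $d(p_0,f_0)\ne 0$. My plan is to exploit the variational structure of \eqref{e6.4}: its solutions arise as critical points of a weighted $L_p$-Blaschke--Santal\'o functional $\mathcal{F}_{p,f}$ extending the one of Section 2 to incorporate the weight $f$ (Proposition \ref{p2.2} generalizes in a straightforward way). The local Leray--Schauder degree then coincides, up to sign, with the Euler characteristic of the Morse critical groups of $\mathcal{F}_{p_0,f_0}$ at $K_0$. To verify nontriviality I would argue by contradiction: if all these critical groups vanished, a Gromoll--Meyer splitting would produce a nontrivial smooth branch of critical points of $\mathcal{F}_{p_0,f_0}$ through $K_0$ in $\Sigma$, contradicting the isolation of $K_0$ modulo rotations. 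The Infinitesimal Generator Lemma \ref{l4.2}, together with the classification in Proposition \ref{p4.1}, should be essential here by constraining the admissible tangential deformations of a solution to the rotational directions, thereby ruling out spurious kernel elements of the linearization along $\Sigma$. Once nonzero local degree at $(p_0,f_0)$ is secured, the homotopy-invariance conclusion above yields the desired constant $\varepsilon_1$.
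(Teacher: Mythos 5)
Your overall strategy (reduce to a Leray--Schauder degree, use Lemma \ref{l6.1} to keep zeros off the boundary of a fixed neighborhood, invoke homotopy invariance) matches the paper's, but the argument breaks down exactly at the one point where the real work lies: showing that the local degree at the unperturbed pair $(p_0,f_0)$ is nonzero. Your contradiction step -- ``if all critical groups vanished, a Gromoll--Meyer splitting would produce a nontrivial branch of critical points, contradicting isolation'' -- is false: an isolated critical point can perfectly well have all critical groups trivial and local degree zero (already in $\R^2$, $f(x,y)=x^2+y^3$ has an isolated critical point at the origin with index $0$ and vanishing critical groups). Isolation modulo rotations gives you a well-defined local degree but no information whatsoever about whether it vanishes, so the existence conclusion does not follow. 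A secondary problem is that your slice degree is not well defined as stated: $I-\mathcal{T}(\cdot,p,f)$ does not map the transverse slice $\Sigma$ into itself (or into a fixed complement of the orbit direction), so $\deg(I-\mathcal{T}|_\Sigma,U,0)$ needs an additional projection or an equivariant-degree framework, and for a general non-symmetric weight $f_0$ the rotation orbit of $K_0$ is not even a family of solutions, so the slice is not the natural object.

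The paper circumvents the degenerate base point entirely by a reparametrization trick: since $h_{K_0}$ also solves $\det(\nabla^2h+hI)=f_1h^{p_1-1}$ with $f_1=f_0h_{K_0}^{p_0-p_1}$ for any $p_1$ near $p_0$, and the linearization of this modified problem at $K_0$ is exactly the Aleksandrov eigen-problem \eqref{e2.1} with $\lambda=1-p_1$, the discreteness of that spectrum (Fredholm alternative) lets one choose $p_1$ so that the linearization is invertible. Then $K_0$ is a nondegenerate zero of the $(p_1,f_1)$-problem, the local degree there is $\pm1\neq0$, and the homotopy $f_t=tf+(1-t)f_1$, $p_t=tp+(1-t)p_1$, combined with the dichotomy \eqref{e6.6} of Lemma \ref{l6.1} to exclude zeros on $\partial\mathcal{N}(p_0,K_0)$, transfers this nonzero degree to the target pair $(p,f)$. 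This shift of the exponent (with the compensating weight) is precisely the missing idea in your proposal: it simultaneously removes any rotational or accidental kernel of the linearization without ever having to compute a degree at a possibly degenerate critical point, whereas your route would additionally require the Kernel Property machinery of Section 6.3 and still could not rule out a zero index. To repair your proof you should replace the Morse-theoretic nonvanishing argument by this perturbation of $(p_0,f_0)$ to a nondegenerate configuration.
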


The proof of this lemma was motivated by the work of Chen-Huang-Li-Liu \cite{CHLL20}.

\begin{proof} Taking $p_1$ close to $p_0$, we rewrite \eqref{e6.4} by
  \begin{equation}\label{e6.8}
    \det(\nabla^2h_{K_0}+h_{K_0}I)=f_1h_{K_0}^{p_1-1}, \ \ \forall x\in{\mathbb{S}}^n,
  \end{equation}
where $f_1\equiv f_0h_{K_0}^{p_0-p_1}$ is also close to $f_0$. Noting that the linearized equation of \eqref{e6.8} at $K_0$ is exactly the linearized eigen-problem \eqref{e2.1} for $\lambda=1-p_1$. By applying the Fredholm's alternative (Gilbarg-Trudinger, \cite{GTbook}, Section 5) for compact operator, the spectral set of \eqref{e2.1} is discrete. Since the spectrum is discrete without finite accumulation except infinity, one can choose $p_1$ closing sufficiently to $p_0$ such that the kernel of the linearized equation of \eqref{e6.8} is trivial. Considering a parameterized nonlinear operator
  $$
   {\mathcal{F}}_t(h)\equiv\det(\nabla^2h+hI)-f_th^{p_t-1}, \ \ t\in[0,1]
  $$
which is continuous mapping from positive $C^{2,\alpha}$ function to $C^\alpha$ function, for
   $$
    f_t\equiv tf+(1-t)f_1, \ \ p_t\equiv tp+(1-t)p_1,
   $$
we set a neighborhood of $K_0$ by
   $$
     {\mathcal{N}}(p_0,K_0)\equiv\{h\in C^{2,\alpha}({\mathbb{S}}^n)| ||h-h_{K_0}||<\varepsilon_{0}/2\},
    $$
where $\varepsilon_0$ is given in Lemma \ref{l6.1}. Noting that the zeros of ${\mathcal{F}}_t$ do not lie on the boundary of ${\mathcal{N}}(p_0,K_0)$ by Lemma \ref{l6.1}, it follows from the preservation property of topological degree \cite{Li89,Ni74} that
  \begin{equation}
     deg({\mathcal{F}}_t,{\mathcal{N}},0)=deg({\mathcal{F}}_0,{\mathcal{N}},0), \ \ \forall t\in[0,1].
  \end{equation}
By our choice of $p_1$ and noting that $K_0$ is the unique zero point of ${\mathcal{F}}_0$ in ${\mathcal{N}}(p_0,K_0)$ and the kernel of the linearized equation of \eqref{e6.8} is trivial, we have also $deg({\mathcal{F}}_0,{\mathcal{N}},0)\not=0$. Thus, the solvability of \eqref{e6.8} at the pair $(p,f)$ has been shown since no bifurcation at infinity occurs because of the a priori estimate Theorem \ref{t1.5}.
\end{proof}

\begin{proof}[Proof of Proposition \ref{p6.2}] Part (1) is the consequence of {\it a-priori} estimate Theorem \ref{t1.5} since all solutions remain in a fixed bounded subset of $C^{2,\alpha}$ space. Noting that $\lambda=1-p_0\in(n+2,2n+6)$ is not an eigenvalue of linearized eigen-problem \eqref{e2.1} at $K={{\S}^n}$, we know that the kernel of the linearized equation \eqref{e4.14} at the unit sphere $K={\S}^n$ is trivial. Supposing that there exists a sequence of nontrivial solutions $K_j\not={\S}^n, K_j\in{\mathcal{C}}_{n,p_0},j\in{\mathbb{N}}$ of \eqref{e1.2} tends to the unit sphere $K_\infty={\S}^n$, subtracting the equation of $K_j$ with the equation of $K_\infty={\S}^n$ and then passing to the limit, we conclude the linearized equation
   \begin{equation}
    -\Delta\phi-n\phi=(1-p_0)\phi, \ \ \forall x\in{\mathbb{S}}^n
   \end{equation}
have a nontrivial solution $\phi$. Contradiction holds for $p_0\in(-2n-5,-n-1)$. So, we conclude that Part (2) holds true. On the other hand, since the solutions of \eqref{e1.2} are not unique at $p=p_0$, Part (4) is a direct consequence of Part (3). We remain to show that Part (3) holds true. Otherwise, if \eqref{e1.2} admits a nontrivial isolated solution $K\not={\S}^n$ for $p_0$ after modulo rotations, by Lemma \ref{l6.2}, \eqref{e1.2} will admit a nontrivial solution $K'$ when $p\in\Gamma_\sigma$ and close to $p_0$. This contradicts with our definition of $\Gamma_\sigma$, and thus completes the proof of the proposition.
\end{proof}

\subsection{The method of integrating flow}

Considering \eqref{e1.2} for $p\in\overline{\Gamma}_\sigma\setminus\Gamma_\sigma$, by Proposition \ref{p6.2}, the solution set ${\mathcal{C}}'_{n,p}$ is an infinite set and contains no isolated solution modulo rotations. In this subsection, we will show an opposite phenomenon when assuming the Kernel Property $(KP^*)$ \eqref{e6.3}. Combining these two opposite phenomena will yield a conclusion that $\overline{\Gamma}_\sigma\setminus\Gamma_\sigma=\emptyset$. This means exactly that the uniqueness set $\Gamma_\sigma$ is relatively closed in $(-n-1-\sigma,-n-1)$.

 Here, we always consider the solution set and Banach spaces modulo rotations. By Proposition \ref{p6.2}, the nontrivial solution set ${\mathcal{C}}'_{n,p}={\mathcal{C}}_{n,p}\setminus\{{\S}^n\}$ is a Perfect Set after modulo rotations, since it contains no isolated point. Given two points $K, K'\in{\mathcal{C}}'_{n,p}$, define the modulus distance by
    $$
     d(K,K'):=\inf_{A\in SO(n+1)}||h_K-h_{AK'}||_{L^2({\S}^n)}.
    $$
 It is remarkable that $SO(n+1)$ is a compact group, so the infimum in the definition of $d(K,K')$ is attainable. Now, we will prove the following lemma.

\begin{lemm}\label{l6.3}
  Given a solution $K_0\in{\mathcal{C}}'_{n,p}$ of \eqref{e1.2}, there exists a positive constant $\delta_0>0$, such that for any solution $K\in{\mathcal{C}}'_{n,p}$ of \eqref{e1.2} located in the $\delta_0$-neighborhood
      $$
       {\mathcal{N}}_{\delta_0}:=\{K'|\ d(K_0,K')<\delta_0\}
      $$
  of $K_0$, there exists $A\in SO(n+1)$ such that $AK=K_0$.
\end{lemm}

\noindent  Next, let us adapt the idea of integrating flow from Differential Geometry and Lie Groups to give a proof of Lemma \ref{l6.3}. Given a tangential field $\xi\in TM$ on a manifold $M$, finding an orbit $\gamma(\varepsilon)\in M, \varepsilon\in{\R}$ (or integrating curves) from $\xi$ is reduced to solving the flow ODE
   \begin{equation}
   \begin{cases}
    \displaystyle \frac{d}{d\varepsilon}\gamma(\varepsilon)=\xi(\gamma(\varepsilon)), & \forall\varepsilon,\\
     \gamma(0)=\gamma_0.
   \end{cases}
   \end{equation}
See for example the book by Peter Olver \cite{O00}. Now, let $K_0\in{\mathcal{C}}'_{n,p}$ be a non-isolated solution, there must be a sequence of solutions $K_j\in{\mathcal{C}}'_{n,p}, K_j\not=K_0\in{\mathbb{N}}$ tending to $K_0$ as $j\to\infty$, under the modulus distance. From the definition of the modulus distance $d(\cdot,\cdot)$, after modulo rotations $A_j, j\in{\mathbb{N}}$, one may assume
   \begin{equation}\label{e6.12}
    \begin{cases}
     ||h_{K_{\varepsilon_j}}-h_{K_0}||_{L^2({\S}^n)}=d(K_{\varepsilon_j},K_0),\\
      K_{\varepsilon_j}:=A_jK_j, \forall j\in{\mathbb{N}}
    \end{cases}
   \end{equation}
after introducing the discrete parameters $\varepsilon_j=d(K_{\varepsilon_j},K_0)$. It is clear that $K_{\varepsilon_j}, j\in{\mathbb{N}}$ is also a sequence of approximation solutions. Applying the elliptic estimate for difference equation of $h_0:=h_{K_0}$ with $h_{\varepsilon_j}:=h_{K_{\varepsilon_j}}$, it yields that the sequence
  $$
   \phi_{\varepsilon_j}=\frac{h_{\varepsilon_j}-h_{0}}{||h_{\varepsilon_j}-h_{0}||_{L^2({\S}^n)}}=\frac{h_{\varepsilon_j}-h_{0}}{\varepsilon_j}
  $$
subconverges to a limiting function $\phi$ in $C^{2,\alpha}({\S}^n)$.

As defined above, this function is one of the discrete differentiations of $h_\varepsilon, \varepsilon=\varepsilon_j$ at $\varepsilon=0$ and thus satisfies $\phi\in\partial_\varepsilon|_{\varepsilon=0} h_{K_\varepsilon}\in E^{(*)}(K)$. By Kernel Property $(KP^*)$ \eqref{e6.3}, and our definition of the parameter $\varepsilon=\varepsilon_j, j\in{\mathbb{N}}$, one has
   \begin{eqnarray}\nonumber\label{e6.13}
     \phi&=&\partial_\varepsilon\big|_{\varepsilon=0}h_{\varepsilon}=\lim_{j\to\infty}\frac{h_{\varepsilon_j}-h_0}{\varepsilon_j}\\
      &=&\nabla_\xi h_K=\sum_{i,j}x_iB_{ij}D_jh_K, \ \ \xi=Bx
   \end{eqnarray}
holds for some antisymmetric matrix $B\in{\R}^{n\times n}$. That is to say, the tangent spaces $E^{(*)}(K)$ of the manifold ${\mathcal{C}}'_{n,p}$ at the points $K\in{\mathcal{C}}'_{n,p}$ are all of the form $\sum_{i,j}x_iB_{ij}D_jh_K$ for antisymmetric matrices $B$ satisfying $B+B^T=0$.

 Now, let us give the proof of Lemma \ref{l6.3} as follows.

\begin{proof}
   Given $K_0\in{\mathcal{C}}'_{n,p}$, if the conclusion of Lemma \ref{l6.3} is not true, there exists a sequence of approximation solutions $K_j\in{\mathcal{C}}'_{n,p}, K_j\not=K_0, \forall j\in{\mathbb{N}}$ tending to $K_0$. Introducing the discrete parameter $\varepsilon_j$ as above satisfying \eqref{e6.12}, there holds the discrete O.D.E. \eqref{e6.13} as well. Note that the solution of the O.D.E.
    \begin{equation}\label{e6.14}
      \begin{cases}
       \displaystyle \frac{d}{d\varepsilon}h_\varepsilon=\nabla_\xi h_\varepsilon, & \forall\varepsilon\in(0,1),\\
        h_0=h_K
      \end{cases}
    \end{equation}
is given by
   \begin{equation}
    K'_\varepsilon=A_\varepsilon K, \ A_\varepsilon=e^{\int_0^\varepsilon B_{\varepsilon'} d\varepsilon'}\in SO(n+1), \ \ \forall\varepsilon\in\tau(K,K').
   \end{equation}
Actually, this follows from the standard Lie group exponential map. After comparing \eqref{e6.14} with the discrete O.D.E. \eqref{e6.13}, we arrive at
    \begin{equation}\label{e6.16}
      \frac{||h_{K_{\varepsilon_j}}-h_{A_{\varepsilon_j}K_0}||_{L^2({\S}^n)}}{\varepsilon_j}<1/2, \ \ \mbox{ holds for large } j
    \end{equation}
by Taylor's expansion and the fact that the orbit map $\varepsilon\to h_{A_\varepsilon K_0}$ is $C^1$ as a map into $C^{2,\alpha}({\mathbb{S}}^n)$. Owing to our normalized assumption \eqref{e6.12} and normalized parameters $\varepsilon_j=d(K_{\varepsilon_j},K_0)$, it yields from \eqref{e6.16} that
   $$
    d(K_{\varepsilon_j},K_0)\leq d(K_{\varepsilon_j},K_0)/2, \ \ \mbox{ for all large } j
   $$
and thus $K_{\varepsilon_j}=K_0$ for large $j$. Contradicting with our assumption $K_{\varepsilon_j}\not=K_0, \forall j$, the proof of Lemma \ref{l6.3} has been proven.
\end{proof}

So, we reach the following proposition assuming the Kernel Property.

\begin{prop}\label{p6.3}
 For dimension $n\geq2$ and $\sigma\in(0,n+4]$, if the Kernel Property $(KP^*)$ \eqref{e6.3} holds for all $p\in(-n-1-\sigma,-n-1)$, then the solution set ${\mathcal{C}}'_{n,p}$ contains only isolated nontrivial solutions after modulo the rotations.
\end{prop}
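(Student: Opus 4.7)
The strategy is to argue by contradiction, combining the Kernel Property $(KP^*)$ \eqref{e6.3} with the integrating-flow lemma just established. Suppose that some nontrivial $K\in\mathcal{C}'_{n,p}$ fails to be isolated modulo rotations: there exist $K_j\in\mathcal{C}'_{n,p}$ with $d(K_j,K)\to 0$ and $d(K_j,K)>0$, so that no $K_j$ lies in the $SO(n+1)$-orbit of $K$. The goal is to use the accumulation of $\{K_j\}$ at $K$ to set up the discrete ODE \eqref{e6.13} and invoke the integrating-flow lemma to force $K_j$ to be a rotation of $K$, contradicting $d(K_j,K)>0$.

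First I would upgrade the accumulating sequence to the Connected Component structure required by the lemma. Fixing $j_0$ large, I need a continuous path $\gamma:[0,1]\to C^{2,\alpha}(\S^n)$ with $\gamma(0)=h_K$, $\gamma(1)=h_{K_{j_0}}$, and $\mathcal{C}'_{n,p}\cap\gamma$ dense in $\gamma$. Chaining together short $C^{2,\alpha}$-segments through the tail of $\{K_j\}$ gives a candidate path; density of solutions on $\gamma$ requires iterating the perfect-set assertion of Proposition \ref{p6.2}(3) at each solution already placed on $\gamma$, so that $\tau(K,K_{j_0})=\{\varepsilon\in[0,1]\mid\gamma(\varepsilon)\in\mathcal{C}'_{n,p}\}$ is dense in $[0,1]$.

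Next, at each $\varepsilon_0\in\tau(K,K_{j_0})$, I verify the hypothesis of the integrating-flow lemma by extracting a discrete tangent. Elliptic estimates for the subtraction equation \eqref{e6.2} applied to $K_\varepsilon\to K_{\varepsilon_0}$ with $\varepsilon\in\tau(K,K_{j_0})$ produce a subsequential limit
\[
 \phi\ =\ \lim_{\varepsilon\to\varepsilon_0}\frac{h_{K_\varepsilon}-h_{K_{\varepsilon_0}}}{\|h_{K_\varepsilon}-h_{K_{\varepsilon_0}}\|_{L^2(\S^n)}}\ \in\ E^{(*)}(K_{\varepsilon_0}),
\]
and the Kernel Property $(KP^*)$ \eqref{e6.3} forces $\phi\in T_O(K_{\varepsilon_0})$, i.e.\ $\phi=\nabla_{B_{\varepsilon_0}x}h_{K_{\varepsilon_0}}$ for some antisymmetric $B_{\varepsilon_0}$. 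After a local reparameterization of $\varepsilon$ near $\varepsilon_0$, this is exactly the discrete ODE \eqref{e6.13}. The integrating-flow lemma then yields $K_\varepsilon=A_\varepsilon K$ with $A_\varepsilon\in SO(n+1)$ throughout $\tau(K,K_{j_0})$; in particular $K_{j_0}=A_1K$, contradicting $d(K_{j_0},K)>0$.

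The main obstacle is the topological construction in the first step: a bare convergent sequence does not automatically produce a continuous path along which $\mathcal{C}'_{n,p}$ is dense, and the perfect-set assertion must be applied recursively (at $K$, at the accumulation points of $K$, and so on) until $\tau(K,K_{j_0})$ is dense in $[0,1]$. The density hypothesis cannot be relaxed, since the nonzero tangential limit $\phi$ to which the Kernel Property is applied at each parameter value must come from genuinely nearby distinct solutions. Once this path construction is in place, the Vitali-covering oscillation estimate used inside the preceding lemma transfers the pointwise $T_O$ identification to a global $SO(n+1)$-orbit statement, and the contradiction closes the proof.
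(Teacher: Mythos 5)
Your strategy coincides with the paper's: use the Kernel Property \eqref{e6.3} to identify every discrete tangent at a nontrivial solution with a rotational field $\nabla_{Bx}h$, and then invoke the discrete integrating-flow lemma (the oscillation/Vitali estimate built on \eqref{e6.13} and \eqref{e6.17}) to conclude that all solutions reached along the flow are rotations of one another. The pointwise part of your argument --- extracting $\phi\in E^{(*)}(K_{\varepsilon_0})$ from the subtraction equation \eqref{e6.2} and writing $\phi=\nabla_{B_{\varepsilon_0}x}h_{K_{\varepsilon_0}}$ by $(KP^*)$ --- is exactly what the paper does before its unnumbered lemma.

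The genuine gap is the bridge you yourself flag: passing from one non-isolated solution to a path $\gamma$ joining $K$ to $K_{j_0}$ on which ${\mathcal{C}}'_{n,p}$ is dense, which is the standing hypothesis of the integrating-flow lemma. First, the tool you propose, Proposition \ref{p6.2}(3), is established only for $p_0\in\overline{\Gamma}_\sigma\setminus\Gamma_\sigma$ (its proof uses the perturbation Lemma \ref{l6.2} together with nearby parameters lying in $\Gamma_\sigma$); under the hypotheses of Proposition \ref{p6.3} you have only $(KP^*)$, and your contradiction assumption gives that one particular $K$ is non-isolated, not that every solution near it is, so the recursion has nothing to iterate. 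Second, even granting the perfect-set property, non-isolation does not yield a path with dense solution trace: a perfect subset of a Banach space can be totally disconnected (Cantor-type), in which case no admissible $\gamma$ exists; and your concrete construction --- chaining short $C^{2,\alpha}$ segments through the tail of $\{K_j\}$ --- places solutions only at countably many junction parameters accumulating at a single endpoint, so $\tau(K,K_{j_0})$ is far from dense, the bound \eqref{e6.17} is available at essentially no interior parameter, and the covering argument cannot control $d(K_{j_0},K)$; the proposed recursive densification has no convergence mechanism, since each insertion of new solutions alters the path and the newly produced solutions need not lie on it. Be aware that the paper does not derive this dense-path structure from non-isolation either: its lemma is stated only for pairs already lying in a ``connected component'' in the dense-trace sense, and the passage from that lemma to the statement of Proposition \ref{p6.3} is left implicit, so the step you are trying to supply is precisely the one the paper leaves unproved, and the fix you sketch does not close it.
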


\begin{proof}[Proof of Proposition \ref{p6.1}]
  Now, Proposition \ref{p6.1} follows from Proposition \ref{p6.2} and Proposition \ref{p6.3}.
\end{proof}

Next, in order showing the closedness of the uniqueness set $\Gamma$, we remain to verify the validity of the Kernel Property $(KP^*)$ \eqref{e6.3} for $p<-n-1$.

\subsection{Validity of the Kernel Property for $p<-n-1$}

We will prove the following result.

 \begin{prop}\label{p6.4}
  Letting $n\geq2$, the Kernel Property $(KP^*)$ \eqref{e6.3} holds for all
     $$
       p\not=n+1, 1, -n-1.
     $$
 \end{prop}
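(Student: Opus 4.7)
My strategy is to combine the Infinitesimal Generator Lemma (Lemma \ref{l4.2}) with the classification in Proposition \ref{p4.1}: together they identify $E^{(*)}(K)$ with the infinitesimal deformations of $K$ coming from rotations of $\mathbb{S}^n$, i.e., with $T_O(K)$. The inclusion $T_O(K) \subset E^{(*)}(K)$ is routine: \eqref{e1.2} is $SO(n+1)$-invariant, so for any antisymmetric $B\in \mathbb{R}^{(n+1)\times(n+1)}$ the family $K_\varepsilon=e^{\varepsilon B}K$ consists of solutions, and differentiating the equation at $\varepsilon=0$ yields $\phi_B\in E^{(*)}(K)$.

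For the reverse inclusion, I would take any $\phi\in E^{(*)}(K)$ and apply the first half of Lemma \ref{l4.2}: the family $h_\varepsilon=h_K+\int_0^\varepsilon \phi\,d\varepsilon$ is an approximate solution of \eqref{e4.12}, and under semi-spherical projection it produces an infinitesimal generator $v_\phi$ of \eqref{e4.11} with characteristic $Q_\phi=\sqrt{1+|y|^2}\,\phi(T^*(y))$. By Proposition \ref{p4.1}, for $p\neq n+1, 1, -n-1$ the only infinitesimal generators of \eqref{e4.11} are of the form \eqref{e4.26}, parameterized by $n$ constants $C_i$ and an antisymmetric matrix $[E_{ij}]$ --- giving exactly $n(n+1)/2 = \dim\mathfrak{so}(n+1)$ parameters. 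The $E_{ij}$ correspond to rotations fixing the polar axis and the $C_i$ to rotations mixing axial with polar directions, so the classified generators span precisely the infinitesimal rotations of $\mathbb{S}^n$ lifted to the chart. Invoking the converse part of Lemma \ref{l4.2} then integrates $v_\phi$ to a genuine one-parameter family $K_\varepsilon=A_\varepsilon K$ with $A_\varepsilon\in SO(n+1)$, and differentiating at $\varepsilon=0$ recovers $\phi=\phi_B\in T_O(K)$ for a suitable antisymmetric $B$.

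The hard part is the matching step between $v_\phi$ and the classified form \eqref{e4.26}. The explicit generator $v_\phi$ produced by Lemma \ref{l4.2} has only a $\partial/\partial y_1$ component and vanishing $\zeta$, whereas \eqref{e4.26} allows nontrivial $\zeta\,\partial/\partial u$ and contributions in every coordinate direction. These need not agree as vector fields on $(y,u)$-space; they need only agree up to the contact equivalence that preserves the characteristic $Q=\zeta-\xi^i u_i$ along the graph of $u_K$. The plan is to compute $Q$ explicitly for \eqref{e4.26}, translate back to the sphere via $u_K(y)=\sqrt{1+|y|^2}\,h_K(T^*(y))$, and check that the resulting family of functions on $\mathbb{S}^n$ coincides with $\{\phi_B : B+B^T=0\}=T_O(K)$. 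The delicate point, to be verified carefully precisely because $p\neq -n-1$, is that no additional affine-type generators appear --- which is exactly what would fail at the critical exponent, where $SL(n+1)$-invariance would enlarge the family.
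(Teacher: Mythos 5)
Your proposal follows essentially the same route as the paper: the forward inclusion by $SO(n+1)$-invariance, the converse by pairing the Infinitesimal Generator Lemma \ref{l4.2} with the classification of Proposition \ref{p4.1}, and then translating the classified generators back to $\mathbb{S}^n$ through the characteristic $\zeta-\xi^iD_iu_K$ divided by $\sqrt{1+|y|^2}$. The "matching step" you flag as the hard part is exactly what the paper carries out in its resolution formula \eqref{e6.25} together with the chain-rule identity \eqref{e6.27}, treating the $E_{ij}$-part and the $C_i$-part separately and exhibiting in each case an explicit antisymmetric $B\in\mathbb{R}^{(n+1)\times(n+1)}$ with $\phi=\pm\sum_{i,j}x_iB_{ij}D_jh_K\in T_O(K)$.
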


 \begin{proof}
   As mentioned above, each functions $\phi\in T_O(K)$ in the rotational tangential set belong all to the kernel $E^{(*)}(K)$ of the linearized equation \eqref{e4.14}. It remains to show the converse inclusion. By Lemma \ref{l4.2}, each solution $\phi\not\equiv0$ of the linearized equation \eqref{e4.14} corresponds to an assigned infinitesimal generator field $\upsilon$ of \eqref{e4.11}. Another hand, it is inferred from Proposition \ref{p4.1} that each infinitesimal generator field $\upsilon=\xi^i\frac{\partial}{\partial y_i}+\zeta\frac{\partial}{\partial u}$ of \eqref{e4.11} is given by \eqref{e4.31}. Using the relations
      \begin{equation}\label{e6.22}
       \begin{cases}
         \displaystyle x_\varepsilon=T^*(y_\varepsilon)=\left(\frac{y_\varepsilon}{\sqrt{1+|y_\varepsilon|^2}},-\frac{1}{\sqrt{1+|y_\varepsilon|^2}}\right)\in{\S}^n,\\
        \displaystyle  h_K(x)=\frac{u_K(y)}{\sqrt{1+|y|^2}},\ \ h_\varepsilon(x_\varepsilon)=\frac{u_\varepsilon(y_\varepsilon)}{\sqrt{1+|y_\varepsilon|^2}}
       \end{cases}
      \end{equation}
  for solution sequence $K_\varepsilon, \varepsilon\in{\R}$ (or support function $h_\varepsilon, \varepsilon\in{\R}$) of the approximation equation \eqref{e4.12} satisfying
     \begin{equation}
       h_0(x)=h_K(x), \ \ \frac{d}{d\varepsilon}\bigg|_{\varepsilon=0}h_\varepsilon(x)=\phi(x), \ \ x\in{\S}^n,
     \end{equation}
   we will classify all tangential solutions $\phi$ of the linearized equation \eqref{e4.14} as below. Actually, by definition of infinitesimal generator field,
      \begin{equation}\label{e6.24}
        \begin{cases}
         \displaystyle  \frac{d}{d\varepsilon}\bigg|_{\varepsilon=0}y_\varepsilon=\xi,\\[10pt]
         \displaystyle \frac{d}{d\varepsilon}\bigg|_{\varepsilon=0}u_\varepsilon=\zeta,
        \end{cases}
      \end{equation}
    where $\xi, \zeta$ are given \eqref{e4.31}. Therefore, we have the resolution formula
      \begin{eqnarray}\nonumber\label{e6.25}
       \phi&=&\frac{d}{d\varepsilon}\bigg|_{\varepsilon=0}h_\varepsilon(x)=\frac{d}{d\varepsilon}\bigg|_{\varepsilon=0}h_\varepsilon(x_\varepsilon^{-1})\\
       &=&\frac{d}{d\varepsilon}\bigg|_{\varepsilon=0}\frac{u_\varepsilon(y_\varepsilon^{-1})}{\sqrt{1+|y|^2}}\\ \nonumber
         &=&\frac{\zeta-\xi\cdot Du_K}{\sqrt{1+|y|^2}},
      \end{eqnarray}
    where $x_\varepsilon^{-1}, y_\varepsilon^{-1}$ are the inverses of the mappings $x\to x_\varepsilon$ and $y\to y_\varepsilon$ respectively. Now, we separate the classified infinitesimal generator $\upsilon$ into two cases:

  \noindent\it{Case 1:} $\xi^i=E_{ij}y_j, \zeta=0$: Using the relation \eqref{e4.31}, it follows from \eqref{e6.25} that
     \begin{equation}\label{e6.26}
      \frac{d}{d\varepsilon}\bigg|_{\varepsilon=0}h_\varepsilon(x_\varepsilon)=-\frac{\sum_{i,j=1}^ny_jE_{ij}D_iu_K}{\sqrt{1+|y|^2}}.
     \end{equation}
  Noting that the matrix $E$ is anti-symmetric, if one denotes another anti-symmetric matrix $B$ defined by $B=\left[
       \begin{array}{cc}
         -E & 0 \\
         0 & 0
       \end{array}
       \right]$, we deduce from
    \begin{eqnarray}\nonumber\label{e6.27}
     D_iu_K&=&\sum_{k=1}^n\sqrt{1+|y|^2}D_kh_K\left[\frac{\delta_{ik}}{\sqrt{1+|y|^2}}-\frac{y_iy_k}{(1+|y|^2)^{\frac{3}{2}}}\right]\\
           &&+\sqrt{1+|y|^2}D_{n+1}h_K\frac{y_i}{(1+|y|^2)^{\frac{3}{2}}}+\frac{y_i}{\sqrt{1+|y|^2}}h_K
    \end{eqnarray}
  that
    \begin{eqnarray}\nonumber\label{e6.28}
      \sum_{i,j=1}^ny_jE_{ij}D_iu_K&=&\sum_{i,j=1}^ny_jE_{ij}D_jh_K\\
          &=&\sqrt{1+|y|^2}\sum_{i,j=1}^{n+1}x_iB_{ij}D_jh_K.
    \end{eqnarray}
  Substituting into \eqref{e6.26}, we finally reach that
   \begin{eqnarray}\nonumber\label{e6.29}
      \phi&=&\frac{d}{d\varepsilon}\bigg|_{\varepsilon=0}h_\varepsilon(x_\varepsilon)\\
        &=&\sum_{i,j=1}^{n+1}x_iB_{ij}D_jh_K=\nabla_\xi h_K.
   \end{eqnarray}
  This is exactly the desired conclusion $E^{(*)}(K)\subset T_O(K)$.

 \noindent\it{Case 2:} $\xi^i=\sum_{j=1}^nC_jy_jy_i+C_i, \zeta=\sum_{j=1}^nC_jy_ju_K$: Similar calculation brings us that
   \begin{eqnarray*}
    \phi&=&\frac{d}{d\varepsilon}\bigg|_{\varepsilon=0}h_\varepsilon(x_\varepsilon)\\
       &=&\frac{\sum_{j=1}^nC_jy_ju_K-\sum_{i=1}^n(\sum_{j=1}^nC_jy_jy_i+C_i)D_iu_K}{\sqrt{1+|y|^2}}.
   \end{eqnarray*}
 Combining with \eqref{e6.27}, we derive that
   \begin{eqnarray*}
     \phi&=&-\sum_{j=1}^n\left(\frac{C_jD_jh_K}{\sqrt{1+|y|^2}}+D_{n+1}h_K\frac{C_jy_j}{\sqrt{1+|y|^2}}\right)\\
      &=&-\sum_{j=1}^n\left(-x_{n+1}C_jD_jh_K+x_jC_jD_{n+1}h_K\right)\\
      &=&-\sum_{i,j=1}^{n+1}x_iB_{ij}D_jh_K,
   \end{eqnarray*}
 where $B$ is an anti-symmetric matrix defined by
   $$
    B=\left[
      \begin{array}{ccccc}
        0 & 0 &  \ldots & 0 & C_1\\
        0 & 0 & \ldots & 0  & C_2\\
         \vdots& \vdots & \vdots& \vdots & \vdots\\
         0 & 0 & \ldots & 0 & C_n\\
        -C_1 & -C_2 & \ldots & -C_n & 0
       \end{array}
      \right].
   $$
The desired conclusion $E^{(*)}(K)\subset T_O(K)$ has been shown, since every anti-symmetric matrix generates a one-parameter subgroup of $SO(n+1)$, the above vector field is precisely an infinitesimal rotational deformation. \end{proof}

\begin{proof}[Complete the proof of Theorem \ref{t1.4}.]
  After combining Proposition \ref{p5.1}, Proposition \ref{p6.1} and Proposition \ref{p6.4}, the conclusion of Theorem \ref{t1.4} has been shown.
\end{proof}

When one focuses on group ${\mathcal{S}}(T)$ invariant solutions $K\in{\mathcal{C}}_{n,p}(T)$ for regular polytope $T\subset{\R}^{n+1}$, a mild modification as in the proof of Theorem \ref{t1.4} shows the following topological result for the group invariant uniqueness set.

\begin{theo}\label{t6.1}
  For dimensions $n\geq2$ and given regular polytope $T\subset{\R}^{n+1}$, the group ${\mathcal{S}}(T)$ invariant uniqueness set $\Gamma(T)=\{p\in(-2n-5,-n-1)| {\mathcal{C}}_{n,p}(T)={\S}^n\}$ is relatively open and closed in the full interval $(-2n-5,-n-1)$.
\end{theo}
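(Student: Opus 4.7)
The plan is to adapt the proof of Theorem~\ref{t1.4} to the symmetric class $\mathcal{K}(T)$, following the same template of openness plus closedness. The only ingredients that need genuine modification are the perturbation and isolation statements of Propositions~\ref{p6.2}--\ref{p6.3}, which must be interpreted relative to $\mathcal{S}(T)$-invariant support functions, and the computation of the $\mathcal{S}(T)$-invariant part of the rotational tangential set $T_O(K)$ that appears in the Kernel Property of Proposition~\ref{p6.4}.

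For openness of $\Gamma(T)$, I would mimic Proposition~\ref{p5.1}. If $p_0\in\Gamma(T)$ is not an interior point, I choose $p_j\to p_0$ with non-spherical solutions $h_j\in\mathcal{C}'_{n,p_j}(T)$. The \emph{a-priori} estimate Theorem~\ref{t1.5} applies to produce a subsequential $C^{2,\alpha}$-limit $h_\infty$, which inherits $\mathcal{S}(T)$-invariance and hence lies in $\mathcal{C}_{n,p_0}(T)=\{\mathbb{S}^n\}$, so $h_\infty\equiv 1$. Normalising $\phi_j=(h_j-1)/\|h_j-1\|_{L^2}$ and passing to the limit in the linearised equation~\eqref{e5.4} yields a nontrivial $\mathcal{S}(T)$-invariant $\phi$ satisfying $-\Delta\phi-n\phi=(1-p_0)\phi$. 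Since $1-p_0\in(n+2,2n+6)$, Lemma~\ref{l5.1} rules out that this is an eigenvalue, giving the desired contradiction.

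For closedness, suppose $p_0\in\overline{\Gamma(T)}\setminus\Gamma(T)$. The perturbation Lemmas~\ref{l6.1}--\ref{l6.2} carry over inside the class $\mathcal{K}(T)$ since the full $L_p$-Minkowski equation preserves $\mathcal{S}(T)$-invariance, and the topological degree argument can be restricted to the invariant subspace $C^{2,\alpha}(\mathbb{S}^n)^{\mathcal{S}(T)}$. The argument of Proposition~\ref{p6.2}, after quotienting by the finite group $\mathcal{S}(T)$, then shows that $\mathcal{C}'_{n,p_0}(T)$ is a perfect set containing no isolated symmetric solutions. On the other hand, by Proposition~\ref{p6.4} every $\phi\in E^{(*)}(K)$ has the form $\phi_B(x)=\sum_{i,j=1}^{n+1}x_iB_{ij}D_jh_K$ with $B+B^T=0$. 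A direct computation from $h_K\circ A=h_K$ for $A\in\mathcal{S}(T)\subset O(n+1)$ gives $\phi_B\circ A=\phi_{A^TBA}$, so $\mathcal{S}(T)$-invariance of $\phi_B$ forces $A^TBA=B$ for all $A\in\mathcal{S}(T)$, i.e.\ $B$ commutes with the entire symmetry group. Since the standard representation of $\mathcal{S}(T)$ on $\mathbb{R}^{n+1}$ is absolutely irreducible (the same structural fact underlying the orthonormal Theorem~\ref{t1.3}), Schur's lemma forces $B$ to be a scalar multiple of $I$; combined with antisymmetry this yields $B=0$, and hence $\phi=0$. Consequently the $\mathcal{S}(T)$-invariant tangential space at every $K\in\mathcal{C}'_{n,p_0}(T)$ is trivial, and rerunning the discrete integrating-flow argument of Proposition~\ref{p6.3} inside $\mathcal{K}(T)$ shows that $\mathcal{C}'_{n,p_0}(T)$ consists only of isolated points. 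This contradicts the perfect-set conclusion, so $\Gamma(T)$ is relatively closed.

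The main technical obstacle is verifying that every step of the non-symmetric machinery restricts cleanly to the invariant subspace $C^{2,\alpha}(\mathbb{S}^n)^{\mathcal{S}(T)}$. The key check is that the linearised operator of~\eqref{e1.2} commutes with the $\mathcal{S}(T)$-action, so that its Fredholm index, its spectral decomposition, and the degree computations all respect the isotypic decomposition; this is immediate from the rotation-equivariance of the equation itself. A secondary subtlety is the triviality of $\mathcal{S}(T)$-invariant antisymmetric matrices for each regular polytope $T$; this can be verified uniformly by exhibiting, case by case, explicit reflections or sign changes in $\mathcal{S}(T)$ whose commutation relations with $B$ force all entries to vanish, exactly in the spirit of the symmetry arguments used in Lemmas~\ref{l3.1} and~\ref{l3.6}--\ref{l3.8}.
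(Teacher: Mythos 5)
Your proposal is essentially the proof the paper intends: the paper disposes of Theorem \ref{t6.1} with the single remark that it follows by ``a mild modification'' of the proof of Theorem \ref{t1.4}, and your outline --- openness via Theorem \ref{t1.5} and the spectral gap of Lemma \ref{l5.1} restricted to $\mathcal{S}(T)$-invariant solutions, closedness via the perturbation/non-isolation dichotomy of Propositions \ref{p6.2}--\ref{p6.3} combined with the Kernel Property of Proposition \ref{p6.4} --- is exactly that modification. The one genuinely new ingredient, which the paper leaves implicit and you correctly identify, is that the $\mathcal{S}(T)$-invariant part of the rotational tangential set $T_O(K)$ is trivial.

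One step in your treatment of that ingredient needs repair. From $h_K\circ A=h_K$ you correctly get $\phi_B\circ A=\phi_{A^TBA}$, but $\mathcal{S}(T)$-invariance of $\phi_B$ only yields $\phi_{A^TBA-B}=0$, not $A^TBA=B$: the map $B\mapsto\phi_B$ is injective only when $h_K$ admits no continuous rotational symmetry, an extra claim you have not established. The clean fix is to average rather than to force $B$ itself to be invariant: setting $\bar B=|\mathcal{S}(T)|^{-1}\sum_{A\in\mathcal{S}(T)}A^TBA$, invariance of $\phi$ gives $\phi=\phi_{\bar B}$, and $\bar B$ is antisymmetric and commutes with every $A\in\mathcal{S}(T)$; absolute irreducibility of the reflection representation of $\mathcal{S}(T)$ (or your case-by-case check with sign changes and permutations, in the spirit of Lemmas \ref{l3.1} and \ref{l3.6}--\ref{l3.8}) together with Schur's lemma forces $\bar B$ to be a scalar matrix, hence $\bar B=0$ and $\phi=0$. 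With this patch the remainder goes through, and in fact simplifies: inside the invariant subspace $C^{2,\alpha}({\mathbb{S}}^n)^{\mathcal{S}(T)}$ there is no need to quotient by rotations at all (nearby rotations of $K$ generally leave $\mathcal{K}(T)$), so triviality of the invariant kernel already yields genuine isolation of every nontrivial symmetric solution directly from the compactness argument producing a unit-norm kernel element --- the discrete integrating flow of Proposition \ref{p6.3} is not even needed --- and in Lemma \ref{l6.2} the degree argument could be replaced by the implicit function theorem on the invariant subspace.
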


\medskip

\section*{Acknowledgments}

The author would like to express his deepest gratitude to Professors Xi-Ping Zhu, Kai-Seng Chou, Xu-Jia Wang and Neil S. Trudinger for their constant encouragements and warm-hearted helps. Special thanks are also owed to Professor Xu-Jia Wang for bringing him to the topic and giving him many key observations. Without his help, the author could not have completed the current exploration. This paper is also dedicated to the memory of Professor Dong-Gao Deng.

\medskip

\medskip

\noindent  The Department of Mathematics, Shantou University, Shantou 515063, P. R. China.

\noindent \textit{E-mail address:} \href{mailto:szdu@stu.edu.cn}{szdu@stu.edu.cn}

\end{document}